\documentclass{article}
\usepackage{graphicx} 
\usepackage{amsmath}
\usepackage{amssymb}
\usepackage{amsthm}
\usepackage{graphicx} 
\usepackage{a4wide}
\usepackage{hyperref}
\usepackage{soul}
\usepackage{mathtools}
\usepackage{tabulary}
\usepackage{booktabs}
\usepackage{mathrsfs}
\usepackage{amssymb}
\usepackage[dvipsnames]{xcolor}  
\usepackage{soul}
\usepackage{multirow}
\usepackage{mathtools} 
\let\oldforall\forall
\let\forall\undefined
\DeclareMathOperator{\forall}{\oldforall}
\usepackage{amsmath}
\usepackage{blindtext}
\usepackage{nccmath}
\usepackage{amsthm}
\usepackage{multicol}
\usepackage{subcaption}
\usepackage{float}
\usepackage{lscape}
\usepackage{xcolor,soul}
\usepackage[normalem]{ulem}
\usepackage{changepage}
\usepackage{algorithm}
\usepackage{comment}
\usepackage{float}
\usepackage{setspace}
\usepackage{placeins}
\usepackage[figuresright]{rotating}
\usepackage{graphicx}

\usepackage{lineno}
\usepackage{natbib}
\usepackage{makecell}
\usepackage{booktabs}
\usepackage{array}
\usepackage{bbm}
\usepackage{mathtools, nccmath}

\newcommand{\argmin}[1]{\underset{#1}{\mathrm{argmin}}}

\newcommand{\x}{\textbf{x}}

\renewcommand{\P}{\mathbb{P}}

\newcommand{\F}{\mathcal{F}}
\renewcommand{\l}{\mathbf{l}}
\renewcommand{\L}{\mathcal{L}}
\newcommand{\Lap}{\mathbf{L}}

\renewcommand{\r}{\mathbf{r}}

\newcommand{\V}{\mathcal{V}}
\newcommand{\R}{\mathbb{R}}
\newcommand{\Res}{\mathbf{R}}
\renewcommand{\d}{\mathrm{d}}

\newcommand{\one}{\mathbf{1}}
\renewcommand{\c}{\mathbf{c}}
\newcommand{\C}{\mathcal{C}}
\newcommand{\Lam}{\mathbf{\Lambda}}
\newcommand{\s}{\mathbf{s}}

\newcommand{\A}{\mathcal{A}}

\newcommand{\Q}{\mathbf{Q}}

\newcommand{\I}{\mathbf{I}}
\newcommand{\E}{\mathcal{E}}

\newcommand{\tran}{\mathrm{T}}

\newcommand{\Exp}{\mathbb{E}}

\newcommand{\set}[1]{\{#1\}}
\newtheorem{lemma}{Lemma}

\newtheorem{corollary}{Corollary}
\newtheorem{theorem}{Theorem}

\title{Deterministic and Random Bipartite Matching on General Networks: Convex Flow Reformulation, Asymptotic Properties, and Fast Algorithms}

\author{Yuhui Zhai, Yanfeng Ouyang}
\date{}
\begin{document}

\maketitle
\begin{abstract}
Minimum-distance bipartite matching on general networks has numerous applications in sciences, engineering, business, and sociology contexts. In transportation, the 
problem frequently arises in mobility service systems, such as 
ride-hailing, freight logistics, and crew assignment, where 
an equal number of supply and demand points are distributed along the edges of an arbitrary given network. 

This paper first focuses on deterministic problems. We present an exact edgewise-separable convex-flow reformulation which explores the cumulative supply-demand imbalance profile of each network edge and represents each matched point pair as one unit of flow routed through the network. 
By introducing a smooth monotone rearrangement approximation of the edge-wise imbalance profiles, which is shown to be asymptotically exact, 
the convex-flow reformulation's can be solved efficiently by standard algorithms such as the Newton's method. If we further conduct a first-order resistance-based approximation of the convex program
, a one-step Laplacian-based estimator for the matching solution and cost can be analytically derived in closed forms. 

Next the paper studies random problems where supply and demand point distributions are described probabilistically. We show that the expected optimal matching distance scales with the square root of the number of points if the supply/demand point distributions are identical, or linearly otherwise. 
In the former case, the optimal flow is proven to be centered, symmetric, and sub-Gaussian, such that the fraction of matching pairs across different edges vanishes asymptotically. 
In the latter case, the limiting resistance network characterizes how supply-demand imbalance is redistributed across the network and further provides an asymptotic closed-form cost estimate. This also motivates a fast algorithm that approximate the optimal flow by projecting the limiting flow onto the flow-conservation feasible set.

Numerical experiments with a variety of networks of varying sizes and topologies show that the proposed convex-flow and resistance-based estimators closely approximate the exact matching cost while substantially reducing computation time. In contrast to the original formulation, whose computational complexity scales cubically with the number of points, once the points are sorted on the edges, the computational complexity of all proposed estimators is independent of the number of points and instead depends only on the network size. 
The convex-flow formulation consistently achieves the highest accuracy, whereas the resistance-based and one-shot estimators provide increasingly faster alternatives, particularly for large-scale problems. 
Moreover the proven theoretical properties of the random matching solution are numerically verified by large-scale simulations. 
\end{abstract}

\section{Introduction}
Minimum-cost bipartite matching, also known as the linear assignment problem, is a fundamental problem in combinatorial optimization. Given two finite sets of equal cardinality and a cost associated with pairing two elements across the sets, the goal is to find a one-to-one matching that minimizes the total assignment cost. This problem has numerous applications in science and engineering fields, and it is a basic one in mobility and logistic systems as well. 
Examples include matching drivers to ride-hailing passengers \citep{OUYANG2023217, ozkan2020dynamic}, parcel couriers to delivery orders \citep{yildiz2019meal}, and optimal drone assignment in truck-drone hybrid delivery \citep{murray2015flying}. Matching decisions directly affect passenger waiting time, empty vehicle mileage, and service efficiency and reliability. In the field of transportation, supply and demand points are often located along the edges of a transportation network, and matching costs are induced by shortest-path distances on the network. Also, most transportation applications (e.g., ride hailing) involve two very large sets of points (e.g., on the order of millions), and hence 
efficiency in solving such problems 
is important for both real-time operations and system-level planning. 

There is a rich literature on ways to solve the bipartite matching problem efficiently. Classical methods include the Hungarian algorithm \citep{kuhn1955hungarian}, the shortest augmenting path algorithms \citep{jonker1987shortest}, and auction-based methods \citep{bertsekas1988auction}. Modern implementations of these algorithms are highly optimized and can solve general cost-matrix instances with strong cubic-time performance \citep{JV_algorithm}. 
These methods are broadly applicable, since they can accommodate arbitrary pairwise cost matrices without relying on any problem assumptions. 
However, this generality also prevents them  exploiting additional geometric or network structures that arises when matching cost is induced by spatial locations or transportation networks. 
In large-scale mobility applications, constructing the full pairwise cost matrix and solving the assignment problem can become computationally expensive, especially when matching decisions must be updated online \citep{abeywickrama2021optimizing}. Multiple approaches have been proposed to accelerate matching by pruning the candidate set \citep{yang2020optimizing}, constructing sparse bipartite graphs \citep{alonso2017ondemand}, batching nearby requests \citep{zheng2018order}, or avoiding unnecessary pairwise travel-cost computations \citep{abeywickrama2021optimizing}. These methods improve scalability, but they still operate within a pairwise candidate-graph or cost-matrix framework that scales polynomially with the number of points. 

A related perspective comes from the theory of optimal transport, which studies how to move mass from one probability distribution to another at minimum transportation cost \citep{santambrogio2015optimal}.  
Minimum-cost bipartite matching can be viewed as the finite empirical counterpart of the optimal transport problem: each supply point and demand point is an atom of equal mass, and the goal is to transport every supply atom to one demand atom at minimum total cost. 
The optimal-transport viewpoint is especially useful when the ground cost has spatial or network structure. On a line, the 1-Wasserstein distance equals the area between cumulative distribution curves, so the matching cost can be computed from an imbalance curve rather than from all pairwise distances \citep{santambrogio2015optimal}. A similar simplification is available on trees. Since any two points on a tree are connected by a unique path, cutting an edge uniquely determines the amount of mass that must cross the cut. The transport cost can therefore be written as the sum over all edge cuts, of the area under the absolute edge-level cumulative imbalance curve \citep{evans2012phylogenetic}. While these results are inspiring and promising, the problem in general networks are more challenging because the presence of cycles make the cross-edge flow no longer uniquely determined by local edge cuts. Along this direction of inquiry, \cite{treleaven2013roadmap} study bipartite matching with shortest-path distances on a general network and reduce the problem to a minimum-cost convex-flow formulation. Later, \cite{treleaven2014emdroadmap} consider the 
continuous distribution analogue and formulate the Earth Mover's Distance on 
road networks as a finite-dimensional convex optimization problem. 
These studies mainly focus on deterministic reformulation of matching problems on a network
, but do not explore properties of the problem as well as efficient solution methods. In addition, little attention has been cast on analyzing the stochastic behavior of the matching problem under randomly distributed supply and demand. 

In transportation applications, interest has also been cast on developing closed-form analytical formulas that explain how optimal matching cost depends on problem size, distance metric, spatial distributions, and/or network structure. Such results are useful for planning applications, e.g., to have a simple way to evaluate expected matching cost under any system planning decisions. In ride-hailing system, for example, spatial models are often used to approximate pickup time of matching distance between idle vehicles and waiting passengers \citep{wang2024ondemand, fan2026approximationmodelssharedmobility}. 
A stream of literature on this topic studies the expectation of the optimal matching cost when supply and demand points are described by random distributions, which is called the random bipartite matching problem. Existing studies establish some asymptotic scaling laws and predict the expected matching cost in continuous metric spaces \citep{mezard1985replicas, caracciolo2014scaling, Caracciolo_2017, shen2024expected, shen2026dynamic}. 
Studies in this direction, however, have not addressed the problem on a general network. Many related questions remain open, such as where matching inefficiency is generated, how network topology affects the matching performance, and how local supply-demand imbalances lead to matches across the network. 

In light of these gaps, this paper develops an interpretable approximation framework for random bipartite matching on general networks. We use edgewise cumulative supply-demand imbalance profiles as the representation of a realized matching instance, rather than point-based bipartite assignment. This greatly reduces the size of the optimization problem (from number of points to number of network edges). While existing convex-flow formulation 
shows that any matching problem instance can be solved through edgewise potentials 
\citep{treleaven2013roadmap, treleaven2014emdroadmap}, our focus is to use this representation as an analytical and computation tool. In particular, we study how the shape of each edge's imbalance profile determines the marginal cost of perturbing the boundary flow, which leads to an edge-induced resistance and a Laplacian-based approximation that estimates the total matching cost and the network-level matching flow. 

This paper makes four main contributions. First, we build on the edgewise convex-flow representation and introduce a smooth monotone-rearrangement approximation of the edge imbalance profiles. This approximation yields a smooth convex-flow formulation that can be solved accurately and efficiently using standard numerical methods, such as Newton’s method. Second, we derive KKT conditions of the smoothed convex-flow formulation and show that their first-order linearization admits an electric-network interpretation, where each edge has an induced resistance determined by the local slope. This resistance characterizes the edge's local sensitivity to flow perturbation and leads to a computationally efficient electric-network analogy and approximation. Third, we provide a series of analyses on the random bipartite matching problem on networks. We establish that the asymptotic scaling of the expected matching cost is of order $\sqrt{n}$ when supply and demand points follow the same distribution, or of order $n$ otherwise. 
We further characterize the corresponding limiting convex-flow problems when the number of points goes to infinity: when the supply and demand points follow identical distributions, we show that the likelihood of inter-edge matching diminishes as sub-Gaussian. 
Fourth, when the point distributions are non-identical, we can use the asymptotic limiting problem and the induced edge resistance as a basis to construct a one-shot estimator for deterministic large-scale random matching problems. The estimator adjusts the limiting flow through a single projection to satisfy the realized flow-conservation constraints, thereby avoiding iterative computations. It provides a computationally very efficient approximation when the supply and demand distributions differ and the problem size is sufficiently large.

The remainder of the paper is organized as follows. Section \ref{sec: convex flow} presents the deterministic convex-flow formulation, the smooth monotone-rearrangement approximation, the resistance-based linearization, and extensions to a few more general problems. Section \ref{sec: RBMP} analyzes random bipartite matching on general networks and derives properties of asymptotic limiting flow and cost approximations. Section \ref{sec: numerical} 
validates the proposed formulations with numerical experiments on different networks. Section \ref{sec: conclusion} concludes the paper and discuss future research directions. 

\section{Separable Convex Flow Formulation} 
\label{sec: convex flow}
This section develops a separable convex flow representation of the deterministic bipartite matching problem on a general network, which features 
an edgewise additive objective function that 
drastically simplifies both computation and asymptotic analysis. 

In what follows, unless stated otherwise, we exclusively use bold symbols to denote vectors or matrices, and assume that a vector is a column vector. 
We also use the convention that a vector or a matrix can be formed by properly stacking indexed scalars, and the vector with a subscript indicates the corresponding scalar element; for example, $\mathbf{x} := (x_a)_{a\in A}$ denotes the vector indexed by the elements in a set $A$, and $\mathbf{x}_a = x_a$ the element corresponding to $a\in A$.  

\subsection{Basic setting}
In a general undirected network $(\V, \E)$, edge $e\in \E$ has length $l_e\geq 0$ and end nodes $v^-_{e} \in \V$ and $v^+_{e} \in \V$. The edge vector is denoted as $\l = (l_e)_{e\in \E}$. 
To uniquely track the location of any point $w$ on edge $e$, which is $x_w$ distance away from $v^-_{e}$, we impose an arbitrary local coordinate axis from $v^-_{e}$ to $v^+_{e}$, such that this point will be recorded to have coordinates $(e, x_w) \in \E \times [0,l_e]$. 
The collection of points on edge $e\in \E$ that is within distance $x$ away from $v^-_{e}$ can be represented by the following set: 
\begin{align*}
\L_e(x) := \{(e,x_w) : x_w \in [0,x]\}.
\end{align*}
Clearly, all points on edge $e$ are represented by the set $\L_e(l_e)$, and all points in the entire network are in the following set:
\begin{align*}
\L := \bigcup_{e\in \E} \L_e(l_e), 
\end{align*}
and $L = \sum_{e\in \E}l_e$ is the total edge length of the network. 
Accordingly, based on the local coordinate axis of each node, the sets of outflow and inflow arcs 
can be defined respectively as follows:
\begin{align*}
\delta^{-}(v) := \{ e \in \E : v_e^- = v \},\quad
\delta^{+}(v) := \{ e \in \E : v_e^+ = v \}, \quad \forall v \in \V.
\end{align*}

Now, suppose a finite sets of supply points $U$ and a finite set of demand points $V$ are distributed along the edges of the networks; i.e., $U \subset \L, V \subset \L$. These two sets are of equal cardinality $n$; i.e., $|U| = |V| = n$. The distance between any two points is measured along the shortest network path, and denoted by $d(\cdot, \cdot)$, while ties, if any, are broken arbitrarily. 
For notation convenience, we further specify the point subsets on each edge; i.e., 
\begin{align*}
U_e := U \cap \L_e(l_e), 
\quad
V_e := V \cap \L_e(l_e), \quad \forall e \in \E. 
\end{align*}
We seek the minimum-distance bipartite matching among these two sets of points, which can be formulated as the following standard integer programming (IP):
\begin{equation}
\label{eq: IP}
\begin{aligned}
\min_{\{y_{uv}\}} \quad & Z^{\text{IP}} = 
\sum_{u\in U}
\sum_{v\in V} y_{uv}d(u,v),
\\
\text{s.t.} \quad &
\sum_{u\in U} y_{uv} = 1, \quad \forall v\in V,\\&
\sum_{v\in V} y_{uv} = 1, \quad \forall u\in U,\\
&y_{uv} \in \{0, 1\}, \quad \forall u\in U,\forall v\in V. 
\end{aligned}    
\end{equation}
Such a formulation enjoys total unimodularity and hence can be efficiently solved in polynomial time; e.g., the state-of-art Jonker-Volgenant algorithm \citep{JV_algorithm} has a time complexity of $O(n^3)$. 

\subsection{Flow-based formulations}
Given any bipartite matching solution, we send one unit of flow between every matched pair $(u,v)\in U\times V$ along its shortest path from $u \in U$ to $v \in V$. This flow is considered positive  (with a value of $+1$) at a location $(e,x)\in \L_e(l_e)$ if its shortest path traverses this location in the direction from $v_e^-\to v_e^+$, or negative (with a value of $-1$) otherwise. The total net flow traversing $(e,x)$ from all matched pairs is denoted $f_e(x)$. Its value varies within $e$ based on the distributions of supply and demand points inside this edge, which can be described by a local cumulative net supply-demand \textit{imbalance profile} on $e$ up to any position $x$, as follows  
\begin{align*}
S_e(x)
&:= |U\cap \L_e(x)| - |V \cap \L_e(x)|,
\quad \forall e \in \E, x \in [0,l_e]. 
\end{align*}
While the matched points could be on different edges, the values of $f_e(x)$ and $S_e(x)$ are clearly related, as summarized in the following lemma. 
\begin{lemma}
\label{lemma: fx_eq_f0_plus_Sx}
For any feasible matching solution in the network,  
\begin{align*}
f_e(x) = f_e(0) + S_e(x), \quad \forall e \in \E, x\in [0, l_e].
\end{align*}
\end{lemma}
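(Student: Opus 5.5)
The plan is to decompose $f_e(x)$ into contributions from individual matched pairs and to track how each contribution changes as the position $x$ moves along edge $e$. Fix a feasible matching, i.e., a set of $n$ disjoint pairs $(u,v)$, and for each pair its chosen shortest path $P_{uv}$ from $u$ to $v$. For a location $(e,x)$, let $\phi_{uv}(x)\in\{-1,0,+1\}$ denote the signed traversal of $(e,x)$ by $P_{uv}$: $+1$ if the path passes $(e,x)$ in the direction $v_e^-\to v_e^+$, $-1$ if in the opposite direction, and $0$ otherwise. Then $f_e(x)=\sum_{(u,v)}\phi_{uv}(x)$. It therefore suffices to show that, for each pair,
\begin{align*}
\phi_{uv}(x)-\phi_{uv}(0)=\mathbb{1}[u\in\L_e(x)]-\mathbb{1}[v\in\L_e(x)],
\end{align*}
and then to sum over pairs. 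Since the matching is a bijection, the sum of the right-hand side is $|U\cap\L_e(x)|-|V\cap\L_e(x)|=S_e(x)$.

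To verify the per-pair identity, I will use a shortest-path property. A shortest path with endpoints $u,v$ meets the edge $e$ in a single connected segment, because a shortest path never revisits a point. Its restriction to $[0,l_e]$ therefore covers an interval $I\subseteq[0,l_e]$, traversed monotonically in one direction. On this interval the sign $\phi_{uv}$ is constant, and it is zero off $I$. As $x$ increases from $0$, $\phi_{uv}$ can change only at endpoints of $I$ that lie in the interior of the edge, and such interior endpoints must be $u$ or $v$ themselves, since otherwise the path would have to exit the edge somewhere other than a node. I will then run through the cases: both endpoints off $e$; exactly one endpoint on $e$, with the path leaving toward $v_e^-$ or toward $v_e^+$; and both endpoints on $e$, with the path staying inside the edge or, when it is shorter, going around through the rest of the network. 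In each case I check that passing $u$ adds $+1$ and passing $v$ adds $-1$. For example, suppose $u$ lies at $x_u$ and the path heads toward $v_e^+$. Then $\phi$ is $0$ before $x_u$ and $+1$ after it, a jump of $+1$ at $u$. If instead the path heads toward $v_e^-$, $\phi$ is $-1$ before $x_u$ and $0$ after it, which is again a jump of $+1$. The case of $v$ is symmetric with the sign reversed, since flow arrives at $v$.

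I expect the main obstacle to be the measure-zero ambiguities. These are the value of $\phi$ exactly at a point location, the convention $\L_e(x)$ with closed interval $[0,x]$ (including at $x=0$, where $S_e(0)$ may be nonzero if points sit at the node), and degenerate situations such as $u=v$ or $l_e=0$. I will handle them by fixing the right-continuous convention implied by the closed sets $\L_e(x)$, interpreting $f_e(0)$ consistently with it, and treating the identity as holding for this convention. A cleaner alternative that avoids casework is a conservation argument. On any subinterval $(x_1,x_2]$ of $e$, each pair's path enters and leaves, so the net change $f_e(x_2)-f_e(x_1)$ equals the number of path sources minus the number of sinks in that subinterval, which is exactly $S_e(x_2)-S_e(x_1)$. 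This is flow conservation on the line segment, and I would present it as the main argument with the casework as justification.
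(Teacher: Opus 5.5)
Your argument is essentially the paper's. The paper shows locally that $f_e$ is constant between consecutive points of $U_e\cup V_e$. It then shows that $f_e$ jumps by $+1$ at a supply point and by $-1$ at a demand point: that pair's path covers one side of the point and not the other, while all other pairs contribute equally on both sides. Your decomposition $f_e=\sum\phi_{uv}$ with $\phi_{uv}(x)-\phi_{uv}(0)=\mathbbm{1}[u\in\L_e(x)]-\mathbbm{1}[v\in\L_e(x)]$ is the same computation, summed over pairs instead of over points. Your attention to conventions at $x=0$ and to coincident points is more careful than the paper, which tacitly assumes distinct locations in the interior of the edge.

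One intermediate claim is false, though. A shortest path need not meet $e$ in a single connected segment, and ``never revisits a point'' does not imply that it does. Take $u,v$ both on $e$ with $x_u$ near $0$ and $x_v$ near $l_e$, and suppose there is a short route between $v_e^-$ and $v_e^+$ outside $e$. Then the shortest $u$--$v$ path leaves $e$ through $v_e^-$ and re-enters through $v_e^+$, so $P_{uv}\cap e=[0,x_u]\cup[x_v,l_e]$. This is exactly the ``going around'' case you list later, and it contradicts your structural claim.

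The proof survives because connectedness is never actually needed. Two facts suffice:
\begin{itemize}
\item $\phi_{uv}$ can change value along $e$ only at $x_u$ or $x_v$, because a shortest path cannot reverse direction inside an edge and can start or stop only at $u$ or $v$.
\item $\phi_{uv}$ jumps by $+1$ at $u$ and by $-1$ at $v$; you already give this local computation, and it does not depend on the global shape of the path.
\end{itemize}
In the wrap-around case with $x_u<x_v$, $\phi_{uv}$ goes $-1\to 0\to -1$, which is consistent with $+1$ at $u$ and $-1$ at $v$. So replace the ``single connected segment'' step with the first fact above. That is how the paper phrases it, by working only in an $\varepsilon$-neighborhood of each point.
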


\begin{proof}
While this lemma should be intuitive, we still give a formal proof here. 
First, along any edge $e$, in between any two consecutive points in $U_e\cup V_e$, the value $f_e(x)$ must stay constant because no supply or demand point exist inside this interval (and hence no additional flow is generated). 
Next, we show that $f_e(x)$ increases (or decreases) by one unit at each supply (or demand) point.
For a point $w\in U_e$ located at $x_w$, if we choose $\varepsilon \rightarrow 0^+$ such that no other point from $U_e\cup V_e$ lies in $(x_w-\varepsilon, x_w+\varepsilon)$, then we can claim that $f_e(x_w+\varepsilon) = f_e(x_w-\varepsilon) + 1$. To see this, let $w'$ be its matched point on the same edge or elsewhere. 
    If flow $w\to w'$ is in direction $v_e^-\to v_e^+$, it does not traverse any $x\in (x_w-\varepsilon, x_w)$, 
    but does traverse all $x\in (x_w, x_w+\varepsilon)$. All other matched pair flows contribute to $f_e(x_w+\varepsilon)$ and $f_e(x_w-\varepsilon)$ equally. Hence, $f_e(x_w+\varepsilon) = f_e(x_w-\varepsilon)+1$.
    If $w\to w'$ is in direction $v_e^+\to v_e^-$, it does not traverse any $x\in (x_w, x_w+\varepsilon)$ 
    but does traverse all $x\in (x_w-\varepsilon, x_w)$. 
    Hence, $f_e(x_w-\varepsilon) = f_e(x_w+\varepsilon)-1$.
In case the point at $x_w$ belongs to the other set, $w\in V_e$, by symmetry, it can be proven similarly that $f_e(x_w+\varepsilon) = f_e(x_w-\varepsilon) - 1$. 
This implies that, along edge $e$ from $v_e^-$ to $v_e^+$, the variation of net flow $f_e(x)$ 
is captured exactly by the cumulative imbalance function $S_e(x)$. This completes the proof. 
\end{proof}

Lemma \ref{lemma: fx_eq_f0_plus_Sx} shows that, for any feasible matching solution, the flow profile on each edge is completely determined by the set of \textit{boundary flow }values $\{f_e(0)\}$ together with the local imbalance profiles $\{S_e(x)\}$ (which is already known). 
This motivates us to define the following potential function ${\Psi}_e(\cdot; \cdot): \mathbb{R}\to\mathbb{R}$ induced by any boundary-flow value $c_e$ and any edge profile $\xi_e:[0, l_e] \to \R$:
\begin{align*}
\Psi_e(c_e; \xi_e) : = \int_0^{l_e} |\xi_e(x)-c_e|\d x. 
\end{align*}
The following lemmas present the basic properties of such a potential function, which is convex and Lipshitz continuous.  

\begin{lemma} 
\label{lemma: Psi_e_Lipchitz}
For any given $\xi_e$, $e\in \E$, the function $c\mapsto\Psi_e(c;\xi_e)$ is convex. For any $c_1, c_2\in \R$, and any edge profiles $\xi_1, \xi_2 \in L^1([0, 1])$, $\Psi_e$ has the Lipschitz property
\begin{align*}
|\Psi_e(c_1; \xi_1) -  
\Psi_e(c_2; \xi_2)| \leq 
l_e\left(\|\xi_1 - \xi_2\|_{L^1} + |c_1 - c_2|\right).
\end{align*} 
When $\xi_e $ takes integer values only, it is also affine on each open interval $c \in (k, k+1)$, $\forall k\in \mathbb{Z}$. 
\end{lemma}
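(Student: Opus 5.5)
The plan is to prove the three claims directly from the definition $\Psi_e(c;\xi_e)=\int_0^{l_e}|\xi_e(x)-c|\,\d x$. Each claim reduces to a pointwise property of the integrand $x\mapsto|\xi_e(x)-c|$, followed by integration over $[0,l_e]$. Throughout I assume $\xi_e$ is integrable, which holds for the imbalance profiles $S_e$ since they are bounded step functions, so that $\Psi_e$ is finite.

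\emph{Convexity.} For fixed $x$, the map $c\mapsto|\xi_e(x)-c|$ is convex, since it is the absolute value composed with an affine map. For $\lambda\in[0,1]$ and the convex combination $c_\lambda=\lambda c_1+(1-\lambda)c_2$, the pointwise inequality
\[
|\xi_e(x)-c_\lambda|\le\lambda|\xi_e(x)-c_1|+(1-\lambda)|\xi_e(x)-c_2|
\]
integrates over $[0,l_e]$ to $\Psi_e(c_\lambda;\xi_e)\le\lambda\Psi_e(c_1;\xi_e)+(1-\lambda)\Psi_e(c_2;\xi_e)$. This is convexity.

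\emph{Lipschitz bound.} I will use the reverse triangle inequality pointwise:
\[
\bigl||\xi_1(x)-c_1|-|\xi_2(x)-c_2|\bigr|\le|\xi_1(x)-\xi_2(x)|+|c_1-c_2|.
\]
Integrating over $[0,l_e]$ and moving the absolute value outside the integral gives
\[
|\Psi_e(c_1;\xi_1)-\Psi_e(c_2;\xi_2)|\le\int_0^{l_e}|\xi_1-\xi_2|\,\d x+l_e|c_1-c_2|.
\]
The one delicate point is matching this to the stated form. The statement writes the profiles as elements of $L^1([0,1])$, so I will read them in the normalized coordinate $t=x/l_e$, i.e.\ $\tilde\xi_i(t)=\xi_i(l_e t)$. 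The change of variables $x=l_e t$ then gives $\int_0^{l_e}|\xi_1-\xi_2|\,\d x=l_e\|\tilde\xi_1-\tilde\xi_2\|_{L^1([0,1])}$, and factoring out $l_e$ yields exactly the claimed constant. I expect this bookkeeping about which norm is meant to be the only real subtlety. If the norm were instead taken on $[0,l_e]$, the sharper bound $\|\xi_1-\xi_2\|_{L^1}+l_e|c_1-c_2|$ already holds, and it is dominated by the stated one whenever $l_e\ge1$.

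\emph{Piecewise affinity.} Suppose $\xi_e$ is integer-valued and fix $k\in\mathbb{Z}$ and $c\in(k,k+1)$. Then no value of $\xi_e$ lies in $(k,k+1)$, so $[0,l_e]$ splits into $A_k=\{x:\xi_e(x)\le k\}$ and $B_k=\{x:\xi_e(x)\ge k+1\}$, and neither set depends on $c$. On $A_k$ the integrand is $c-\xi_e(x)$, and on $B_k$ it is $\xi_e(x)-c$. Hence
\[
\Psi_e(c;\xi_e)=\int_{B_k}\xi_e\,\d x-\int_{A_k}\xi_e\,\d x+c\bigl(|A_k|-|B_k|\bigr),
\]
which is affine in $c$ on $(k,k+1)$, with slope $|A_k|-|B_k|$ given by the Lebesgue measures of the two sets.
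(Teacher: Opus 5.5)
Your proof is correct and follows the same route as the paper: pointwise convexity of $c\mapsto|\xi_e(x)-c|$ integrated over the edge, the (reverse) triangle inequality for the Lipschitz bound, and the split of $[0,l_e]$ into $\{\xi_e\le k\}$ and $\{\xi_e\ge k+1\}$ for the affine claim. Your Lipschitz step is more careful than the paper's, which simply asserts the final inequality: reading the norm on the rescaled interval $[0,1]$ is what makes the stated constant valid, since with the norm on $[0,l_e]$ the bound fails for $l_e<1$ (take $c_1=c_2=0$, $\xi_1\equiv1$, $\xi_2\equiv0$), and your sharper form $\int_0^{l_e}|\xi_1-\xi_2|\,\d x+l_e|c_1-c_2|$ is the one the paper actually relies on later.
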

\begin{proof}
We prove these properties by definition. For all $c_1, c_2\in \mathbb{R}$, and $\theta \in [0, 1]$, by triangular inequality, we prove convexity from the following:
\begin{align*}
&\theta{\Psi}_e(c_1;\xi_e) + (1-\theta){\Psi}(c_2; \xi_e)
= \int_0^{l_e} |\theta(\xi_e(t)-c_1)|\d t + \int_0^{l_e} |(1-\theta)(\xi_e(t)-c_2)|\d t\\
&\geq \int_0^{l_e} |\theta(\xi_e(t)-c_1)+(1-\theta)(\xi_e(t)-c_2)|\d t
=\Psi(\theta c_1 + (1 - \theta)c_2; \xi_e).
\end{align*}
Meanwhile, Lipschitz continuity is shown as follows:
\begin{align*}
|\Psi_e(c_1; \xi_1)-{\Psi}_e(c_2; \xi_2)| = \bigg|\int_0^{l_e} |\xi_1(x) - c_1| - |\xi_2(x) - c_2|\d x\bigg| 
\leq 
l_e\left(\|\xi_1 - \xi_2\|_{L^1} + |c_1 - c_2|\right).
\end{align*}
Finally, when $\xi_e $ takes integer values only, consider an arbitrary $c\in (k,k+1)$, where $k\in \mathbb{Z}$.  
\begin{align*}
|\xi_e(x) - c| = 
\begin{cases}
\xi_e(x) - c, & \xi_e(x) \geq k+1,\\
c - \xi_e(x), & \xi_e(x) \leq k.       
\end{cases}
\end{align*}
Thus, by definition, 
\begin{align*}
\Psi_e(c; \xi_e) &= \int_{0}^{l_e} |\xi_e(x) - c|\d x 
=\int_{\xi_e(x) \geq k+1} (\xi_e(x) - c )\d x 
+ \int_{\xi_e(x) \leq k} (c - \xi_e(x))\d x \\
&= \left[\int_{x:\xi_e(x) \leq k}\d x -\int_{x:\xi_e(x) \geq k+1}\d x \right] c
+ \left[\int_{x:\xi_e(x) \geq k+1} \xi_e(x)\d x - \int_{x:\xi_e(x) \leq k} \xi_e(x)\d x\right],
\end{align*}
which is clearly an affine function with respect to $c$. 
This completes the proof. 
\end{proof}



When $c_e := -f_e(0)$ and $\xi_e = S_e$, the potential function $\Psi_e(c_e; \xi_e)=\int_0^{l_e} |f_e(x)|\d x$, which gives the total absolute distance traversed by the net flow on edge $e$. 
Let $\c = (c_{e})_{e\in\E}$ denote the vector of boundary flows.  
Note we already know the following input data: (i) 
the vector of edge-wise total imbalances $\mathbf{{s}} = (S_e(l_e))_{e\in \E} \in \mathbb{Z}^{|\E|}$; and 
(ii) the 
outflow (or inflow) node-edge incidence matrix $\I^+ $ (or $ \I^-$), whose $(v,e)$-th entry equals 1 if $e\in \delta^+(v)$ (or if $e\in \delta^-(v)$), or 0 otherwise. 
Any feasible flow $\{f_e(x)\}$ must satisfy the conservation ---
at each nodal junction, the net in-coming flow must be equal to the net out-going flow; i.e., for each $v\in \V$:
\begin{align*}
&\sum_{ e\in \delta^-(v)}
f_e(0) - \sum_{e\in \delta^+(v)}
f_e(l_e) = 
\sum_{e\in \delta^-(v)}
f_e(0) - \sum_{e\in \delta^+(v)}
\left[f_e(0) + S_e(l_e)\right] = 0.
\end{align*}
Since $c_e:= -f_e(0)$, the above can be written into the following matrix form:
\begin{align} 
\label{conserv}
\I\c = \I^+\s,
\end{align}
where $\I = \I^+ - \I^-$. We will next show that the optimal objective value of \eqref{eq: IP} can be obtained by solving the following formulation:
\begin{equation}
\begin{aligned} 
\label{eq: CF}
\min_{\mathbf{c}\in \mathbb{Z}^{|\E|}} \quad Z^{\mathrm{CF}} = \sum_{e\in \E} \Psi_e(c_e; S_e), \quad
\text{s.t.} \quad \eqref{conserv}.
\end{aligned}
\end{equation}
In other words, we will show $Z^\mathrm{CF} = Z^\mathrm{IP}$. The proof proceeds in two steps. First, Lemma \ref{lemma: fe*_gives_feasible_solution} below shows that every feasible boundary flow vector $\mathbf{c}$ can be converted into a feasible matching solution with the same objective value, and hence $Z^\mathrm{IP}\leq Z^\mathrm{CF}$. Next, Lemma \ref{lemma: fe*_lower_bound} shows that every feasible matching induces a feasible boundary flow vector whose objective value is no larger than the corresponding matching distance, and hence $Z^{\mathrm{CF}} \leq Z^{\mathrm{IP}}$.  


\begin{lemma}
\label{lemma: fe*_gives_feasible_solution}
Given any feasible flow profile $\{f_e(x), \forall (e,x)\}$, after sorting all points on each network edge based on the local $x$-coordinate (in $O(n\log n)$ time), there exists an $O(n)$-time  algorithm to construct a feasible solution to \eqref{eq: IP} with an objective value of 
\begin{align*}
\sum_{e\in \E}\int_0^{l_e} |f_e(x)|\d x.
\end{align*}
\end{lemma}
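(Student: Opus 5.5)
The plan is to read the given flow profile as an integral network flow on a refined graph. The matching is then produced by a flow decomposition whose unit paths have total length exactly $\sum_{e\in\E}\int_0^{l_e}|f_e(x)|\d x$. The objective value of the resulting IP solution is understood as the total distance travelled by the matched pairs along the routes the construction gives them.

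First, I would refine the network. On each edge $e$ I sort the points of $U_e\cup V_e$ by local coordinate, which is the $O(n\log n)$ preprocessing step, and insert each point as a new node. The edge is thereby split into consecutive segments. Between consecutive points no supply or demand point occurs, so by Lemma~\ref{lemma: fx_eq_f0_plus_Sx} $f_e$ is constant on each segment, and since $S_e$ and $f_e(0)$ are integers it takes an integer value $\phi$ there. I orient each segment along the sign of $\phi$ and give it $|\phi|$ units. Lemma~\ref{lemma: fx_eq_f0_plus_Sx} then says the refined flow has net outflow $+1$ at each supply node and $-1$ at each demand node. The conservation constraint \eqref{conserv} says original network nodes are balanced. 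The cost $\sum(\text{segment length})\times|\phi|$ is exactly $\sum_{e\in\E}\int_0^{l_e}|f_e(x)|\d x$.

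Next, I would decompose this integral flow into $n$ unit source-to-sink paths plus possibly some directed cycles, using the standard decomposition. I would first cancel any directed cycle by subtracting one unit around it. If the stated equality is to hold with no slack, this step must be justified: either the profile is taken cycle-free, as is the case for any profile arising from Lemma~\ref{lemma: fe*_lower_bound}-type constructions, or the equality is read for the acyclic part. Each unit path starts at some $u\in U$ and ends at some $v\in V$, and setting $y_{uv}=1$ for it gives a perfect matching, because each supply (demand) node emits (absorbs) exactly one unit. In an acyclic decomposition every segment carries exactly $|\phi|$ path units in the same direction, with no cancellation. The summed path lengths therefore equal the integral exactly.

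For the running time, I would scan each edge once with a stack or queue of "open" units. Walking from $v_e^-$ to $v_e^+$, a supply point either absorbs a pending incoming demand unit or pushes a new outgoing unit, and symmetrically for demand points. The units that leave the edge at $v_e^\pm$ (there are $|c_e|$ and $|c_e+S_e(l_e)|$ of them) are paired at each original node by conservation, using simple FIFO lists. Each point is pushed and popped once, giving $O(n)$ work beyond the sorting, with network size treated as fixed. I expect the main obstacle to be the exactness claim. The routes must carry no opposing units on any segment, and the route charged to each pair must be the one counted in the integral. I would handle this by making the within-edge stack matching and the node-level pairing always cancel opposite units first, so that no segment ever carries units in both directions.
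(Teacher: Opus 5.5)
Your proposal is correct and follows the paper's proof: the paper slices the region under $|f_e|$ into unit-height layers, pairing each up-jump with the next down-jump at the same level by a LIFO scan with virtual points at the edge ends, then concatenates segments at vertices using conservation, which is the same unit-path decomposition you get from your per-edge stack scan and node-level pairing. You are more careful than the paper about directed cycles, which it ignores, but your parenthetical that matching-induced profiles are cycle-free is false --- three clockwise shortest-path pairs on a ring, each spanning about $0.4$ of its length and staggered by a third, induce a full circulation --- so the cleaner justification is that Theorem~\ref{theorem: CF} only needs the inequality, and an optimal $\c$ of \eqref{eq: CF} carries no directed cycle of positive length, since subtracting one unit around such a cycle preserves \eqref{conserv} and strictly lowers the cost.
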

\begin{proof}
A feasible matching solution can be constructed by slicing the area between the $f_e(x)$ curve and $x$-axis into horizontal unit-height layers (i.e., height $=1$). Since $f_e(x)$ takes an integer value everywhere, each horizontal layer corresponds to two consecutive integers, $k$ and $k+1$, where $k\in \mathbb{Z}$.  
For $k\geq 0$, the layer begins at an upward jump of $f_e(x)$ from $k$ to $k+1$, which can be caused by either a supply point at that location or a unit flow entering the edge (which has to be at $x=0$). The layer ends at a downward jump from $k+1$ to $k$, which can be caused by either a demand point at that location or a unit flow leaving the edge (which has to be at $x = l_e$). 
Thus, each upward jump from $k$ to $k+1$ can be paired with the first subsequent downward jump from $k+1$ to $k$, and one unit of flow is sent along the interval between these two jump locations. This interval length (or the area size of the unit-height layer) corresponds to either a local match distance between a supply point and a demand point on edge $e$, or a local segment of a shortest path that traverses $e$.
For $k < 0$, everything is exactly similar and hence omitted. 
This is illustrated in Figure \ref{fig: cost_eq_int_abs_fx}(a), where 
supply and demand points are represented by red circle markers and blue triangle markers, respectively. Each arrow indicates a point-to-point match. 
Summing over the areas of all horizontal unit-height layers gives exactly 
$\sum_{e\in \E} \int_0^{l_e} |f_e(x)|\d x$.

The above slicing idea also leads to the following scanning procedure which, in linear time, can construct a feasible matching solution in a network. 
First, virtual supply or demand points are added to the two endpoints of each edge $e$ wherever the boundary flows are non-zero; i.e., 
if $f_e(0^+) > 0$ (or $<0$), we place $|f_e(0^+)|$ virtual supply (or demand) points at $x = 0$; 
if $f_e(l_e^-) > 0$ (or $<0$), we place $|f_e(l_e^-)|$ virtual demand (or supply) points at $x = l_e$. 
Then we scan all real and virtual points on each edge in an increasing order of their local $x$-coordinates, and use the above slicing procedure to form matches. It is easy to show that this procedure can be done in $O(n)$ computation time with the use of a last-in first-out (LIFO) stack, since 
each supply or demand point corresponds to exactly two operations (addition to or removal from the stack). 

After all edges are processed, any path segments that involve virtual points must be concatenated 
to obtain supply-demand point matches. Due to flow conservation, the virtual supply and demand points as every network vertex $v \in \V$ must be paired. 
We can arbitrarily concatenate two path segments (along two edges) that involve one virtual point of each type at the same vertex. 
This operation will be repeated until no virtual point remains in the path segments, and a feasible matching solution is found. This concatenation procedure does not affect the total matching distance, and hence a feasible matching solution 
%
to \eqref{eq: IP} has an objective value of $\sum_{e\in \E} \int_0^{l_e} |f_e(x)|\d x$. This completes the proof.
\end{proof}
 
\begin{lemma}
\label{lemma: fe*_lower_bound}
For any feasible matching solution that induces the set of flow profiles $\{f_e(x), \forall (e,x)\}$,  the total matching distance is bounded from below by
$\sum_{e\in \E} \int_0^{l_e}|f_e(x)|\d x$.
\end{lemma}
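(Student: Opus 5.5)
The plan is to write the total matching distance as an integral, over every network location, of the number of matched paths that pass through it, and then compare this count with the net flow.

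Fix a feasible matching $\{y_{uv}\}$. For each matched pair $(u,v)$ with $y_{uv}=1$, let $P_{uv}$ be the shortest path used. Define the signed indicator $g_{uv,e}(x)\in\{-1,0,+1\}$ for each edge $e$ and almost every $x\in[0,l_e]$:
\begin{itemize}
\item $g_{uv,e}(x)=+1$ if $P_{uv}$ traverses $(e,x)$ in the direction $v_e^-\to v_e^+$;
\item $g_{uv,e}(x)=-1$ if $P_{uv}$ traverses $(e,x)$ in the direction $v_e^+\to v_e^-$;
\item $g_{uv,e}(x)=0$ otherwise.
\end{itemize}
By the definition of the net flow, $f_e(x)=\sum_{(u,v):y_{uv}=1} g_{uv,e}(x)$ for almost every $x$. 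We ignore the finitely many point locations, which have measure zero.

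The first step is the identity
\begin{align*}
d(u,v)=\sum_{e\in\E}\int_0^{l_e}|g_{uv,e}(x)|\,\d x .
\end{align*}
A shortest path traverses each location at most once, so this integral is exactly the length of $P_{uv}$. If a path traversed some point twice, it would contain a cycle that could be removed without increasing length, so a shortest path does not do this. Any zero-length loops from ties also contribute nothing. This identity needs some care: $P_{uv}$ may start and end in the interior of edges, and it may even run inside a single edge when $u$ and $v$ share that edge. In both cases the segment covered is an interval, each point of which is counted once.

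Summing the identity over matched pairs and exchanging the finite sum with the integral gives
\begin{align*}
Z=\sum_{(u,v):y_{uv}=1} d(u,v)=\sum_{e\in\E}\int_0^{l_e}\sum_{(u,v):y_{uv}=1}|g_{uv,e}(x)|\,\d x \;\ge\; \sum_{e\in\E}\int_0^{l_e}\Big|\sum_{(u,v):y_{uv}=1} g_{uv,e}(x)\Big|\,\d x=\sum_{e\in\E}\int_0^{l_e}|f_e(x)|\,\d x .
\end{align*}
The inequality is the pointwise triangle inequality. It is strict exactly where paths cross a location in opposite directions, which is the cancellation that the net flow does not see.

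The main obstacle is the first step, namely justifying that each matched path's length equals the integral of $|g_{uv,e}|$. This is where one must check that shortest paths are simple in the continuous sense on the network. If a path revisited a point, it would contain a closed sub-walk of nonnegative length, and deleting that sub-walk would give a path that is no longer. We may therefore assume, by the same tie-breaking freedom the paper allows in choosing shortest paths, that every chosen path is simple. With that settled, the remaining steps are routine.
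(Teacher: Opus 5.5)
Your proof is correct and follows essentially the paper's argument: at each location the number of matched paths crossing it dominates the absolute net flow (your pointwise triangle inequality on the signed indicators), and integrating over all edges yields the bound. Your care about simplicity of shortest paths is fine but more than needed, since a lower bound only requires that each path's length is at least the measure of the set it covers, i.e., $d(u,v)\ge\sum_{e\in\E}\int_0^{l_e}|g_{uv,e}(x)|\,\d x$.
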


\begin{proof}
For any edge $e\in \E$ and position $x\in [0, l_e]$,
by definition, the net flow $f_e(x)$ is the number of shortest paths crossing $x$ in the direction $v_e^-\to v_e^+$ minus the number of those crossing $x$ in the opposite direction $v_e^+\to v_e^-$.
Thus, the total number of shortest paths crossing $x$, given by the sum of these two counts, is at least $|f_e(x)|$. Now consider an infinitesimal edge segment $[x, x+\d x]$.  Any unit of flow that crosses this infinitesimal segment contributes $\d x$ to the total matching distance. 
Hence, the total contributions from all flow paths crossing this infinitesimal segment is at least $|f_e(x)|\d x$. This is illustrated in Figure \ref{fig: cost_eq_int_abs_fx}(b). Since this argument holds for every $x\in [0, l_e]$ and for all $e\in \E$, integrating this quantity over $x$ 
and summing it across $e$,  
$\sum_{e\in \E} \int_{0}^{l_e}|f_e(x)| \d x$, 
gives a lower bound to the total matching distance.
This completes the proof. 
\end{proof}

\begin{figure}[ht]
    \centering
    \includegraphics[width=1\linewidth]{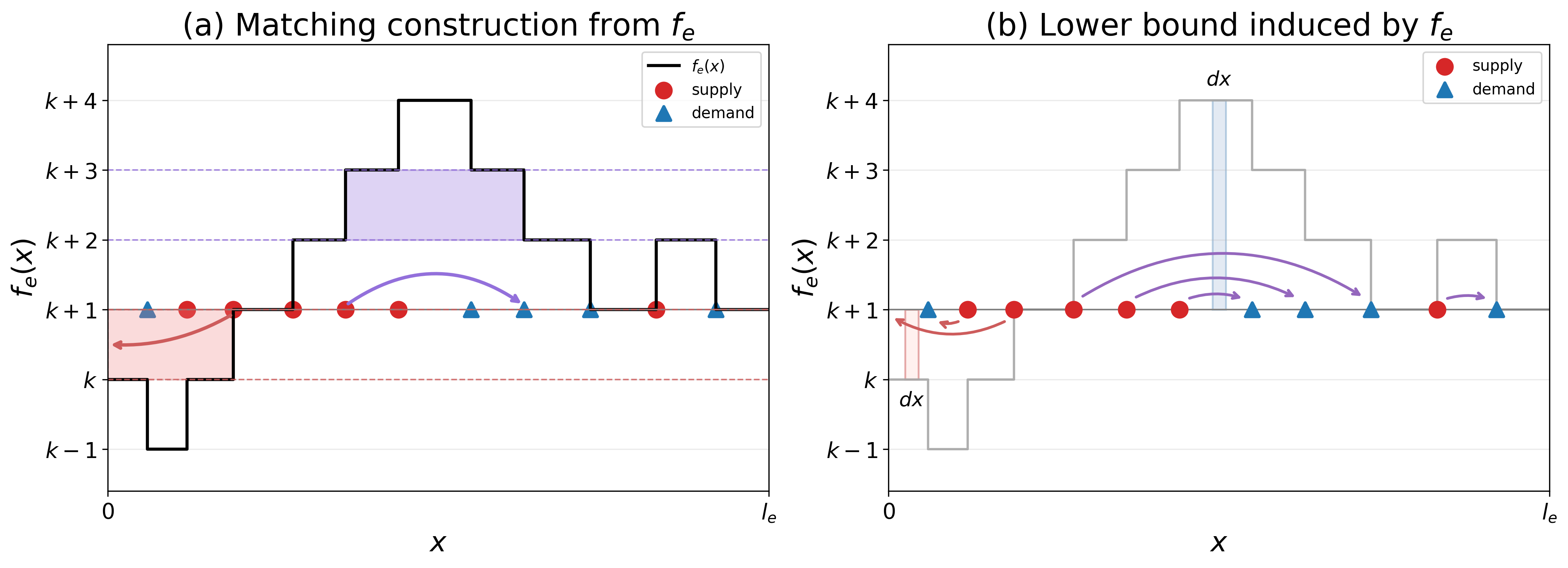}
    \caption{Illustrations for the proofs of Lemmas \ref{lemma: fe*_gives_feasible_solution} and \ref{lemma: fe*_lower_bound}.}
    \label{fig: cost_eq_int_abs_fx}
\end{figure}

Lemmas \ref{lemma: fe*_gives_feasible_solution}
and \ref{lemma: fe*_lower_bound} together establish the equivalence between the optimal objective values from \eqref{eq: IP} and \eqref{eq: CF}, as summarized in the following theorem. 

\begin{theorem}
\label{theorem: CF}
At optimality, $Z^\mathrm{IP} = Z^\mathrm{CF}$. 
\end{theorem}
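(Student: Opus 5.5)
The plan is to prove the two inequalities $Z^\mathrm{IP}\le Z^\mathrm{CF}$ and $Z^\mathrm{CF}\le Z^\mathrm{IP}$ separately, each time passing through the correspondence between integer boundary-flow vectors $\c$ and edge flow profiles given by Lemma \ref{lemma: fx_eq_f0_plus_Sx}. The key identity is that, with $f_e(x) := -c_e + S_e(x)$, we have $\Psi_e(c_e;S_e) = \int_0^{l_e}|f_e(x)|\d x$, so the objective of \eqref{eq: CF} is exactly the total absolute flow integral used in Lemmas \ref{lemma: fe*_gives_feasible_solution} and \ref{lemma: fe*_lower_bound}.

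For $Z^\mathrm{IP}\le Z^\mathrm{CF}$, I would take an optimal (or any feasible) integer $\c$ for \eqref{eq: CF}. Such an optimum exists because the objective is convex and coercive, and the feasible set is the lattice points of an affine subspace. I then define $f_e(x) = -c_e + S_e(x)$. This profile is integer-valued and has unit jumps exactly at the supply and demand points. The constraint \eqref{conserv} is precisely nodal flow conservation, as derived just before \eqref{conserv}, so $\{f_e\}$ is a feasible flow profile in the sense required by Lemma \ref{lemma: fe*_gives_feasible_solution}. That lemma then produces a feasible matching of cost $\sum_e\int_0^{l_e}|f_e|\d x = Z^\mathrm{CF}$. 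The IP optimum is no larger than this cost.

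For $Z^\mathrm{CF}\le Z^\mathrm{IP}$, I would take an optimal matching of \eqref{eq: IP} and route each matched pair along a shortest path, which induces the net flow $f_e(x)$. By Lemma \ref{lemma: fx_eq_f0_plus_Sx}, $f_e(x) = f_e(0)+S_e(x)$. Setting $c_e := -f_e(0)$ gives an integer vector, since it is a signed count of paths. This vector satisfies \eqref{conserv} because every path entering a vertex also leaves it, unless the path ends there; endpoints lie on edges, and a point sitting exactly at a vertex can be assigned to one incident edge. Lemma \ref{lemma: fe*_lower_bound} then gives $Z^\mathrm{IP}\ge\sum_e\int_0^{l_e}|f_e|\d x = \sum_e\Psi_e(c_e;S_e)\ge Z^\mathrm{CF}$.

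The main obstacle is the bookkeeping in the second direction, namely verifying that the matching-induced boundary flows satisfy \eqref{conserv} exactly. This means confirming that the conservation derivation preceding \eqref{conserv} applies to net path counts at each vertex, and handling points located exactly at vertices or at $x=0,l_e$ under the left- and right-limit conventions. I would handle it by a limiting argument: evaluate $f_e$ at $0^+$ and $l_e^-$, and count each path's net crossing of a small neighbourhood of $v$. Each path crosses that neighbourhood with zero net contribution except at its endpoints, and the endpoint contributions are already accounted for in $S_e$.
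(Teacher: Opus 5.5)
Your proposal is correct and follows the paper's own route: $Z^\mathrm{IP}\le Z^\mathrm{CF}$ by applying Lemma \ref{lemma: fe*_gives_feasible_solution} to the profile $f_e = S_e - c_e$, and $Z^\mathrm{CF}\le Z^\mathrm{IP}$ by combining Lemma \ref{lemma: fx_eq_f0_plus_Sx} with the lower bound of Lemma \ref{lemma: fe*_lower_bound} for the flow induced by an optimal matching. Your extra checks make explicit some bookkeeping the paper leaves implicit: that the induced $\c$ is integral and satisfies \eqref{conserv}, and how to treat points located at vertices.
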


In the above analysis, the boundary flow $c_e=-f_e(0)$ 
takes integer values for all $e \in \E$.
Yet, we notice that the flow conservation constraints in \eqref{eq: CF} 
satisfy total unimodularity. As such, it may be convenient to relax the integrality requirement and allow $c_e$ to take real values, as in the following formulation. 
\begin{equation}
\begin{aligned} 
\label{eq: RLX}
\min_{\mathbf{c}\in \mathbb{R}^{|\E|}} \quad & Z^\mathrm{RLX} = \sum_{e\in \E} \Psi_e(\c_e; S_e), \quad
\text{s.t.} \quad \eqref{conserv}. 
\end{aligned}    
\end{equation}    

The relaxed optimal objective $Z^\mathrm{RLX}$ can be shown to also equal that of the original formulation \eqref{eq: CF}, as stated below.

\begin{theorem}   
\label{theorem: RLX}
At optimality, 
%
$Z^\mathrm{CF} = Z^\mathrm{RLX}$. 

\begin{proof}
Let $\c^\mathrm{rlx} \in \mathbb{R}^{|\E|} $ denote the optimal solution to \eqref{eq: RLX}. 
For any given integer-valued $S_e, e\in \E$, 
Lemma \ref{lemma: Psi_e_Lipchitz} says that $\Psi_e(c_e; S_e)$ is 
affine over  $c_e \in [\lfloor\c^\mathrm{rlx}_e\rfloor, \lceil\c^\mathrm{rlx}_e\rceil]$.  
Since $\I$ is totally unimodular and $\I^+\s$ is integral, there exists an integral extreme point $\c^{\mathrm{int}} \in \mathbb{Z}^{|\E|}$ that is an optimal solution to \eqref{eq: RLX} while satisfying the following additional constraints: 
$\lfloor\c^\mathrm{rlx}_e\rfloor\leq \c_e^\mathrm{int} \leq \lceil\c^\mathrm{rlx}_e\rceil, \forall e\in \E$.
The objective value of such a restricted problem is equal to $Z^\mathrm{RLX}$. Since $\c^\mathrm{int}$ is a feasible solution to \eqref{eq: CF}, we have $Z^\mathrm{CF} \leq Z^\mathrm{RLX}$. On the other hand, $Z^\mathrm{RLX}\leq Z^\mathrm{CF}$ since \eqref{eq: RLX} is a relaxed problem. This proves that $Z^\mathrm{RLX} = Z^\mathrm{CF}$. 
\end{proof}
\end{theorem}

Theorems \ref{theorem: CF} and \ref{theorem: RLX} reveal that the original pairwise matching problem \eqref{eq: IP} can be equivalently represented by a continuous flow formulation \eqref{eq: RLX}. Although this formulation can already be solved by standard optimization techniques, it still does not reveal interesting properties mainly because the main input data, imbalance profiles $\{S_e(x), \forall (e,x)\}$ are not differentiable everywhere. 
In the following section, we 
further analyze an approximate formulation based on a smooth representation of $\{S_e(x), \forall (e,x)\}$, which 
leads to a set of interpretable optimality 
conditions, and will be useful in analyzing the stochastic version of the bipartite matching problem.

\subsection{Approximate flow formulation and linear-time algorithm}
\label{sec: approximate flow and linear-time}

\begin{figure}[ht]
\centering\includegraphics[width=0.95\linewidth]{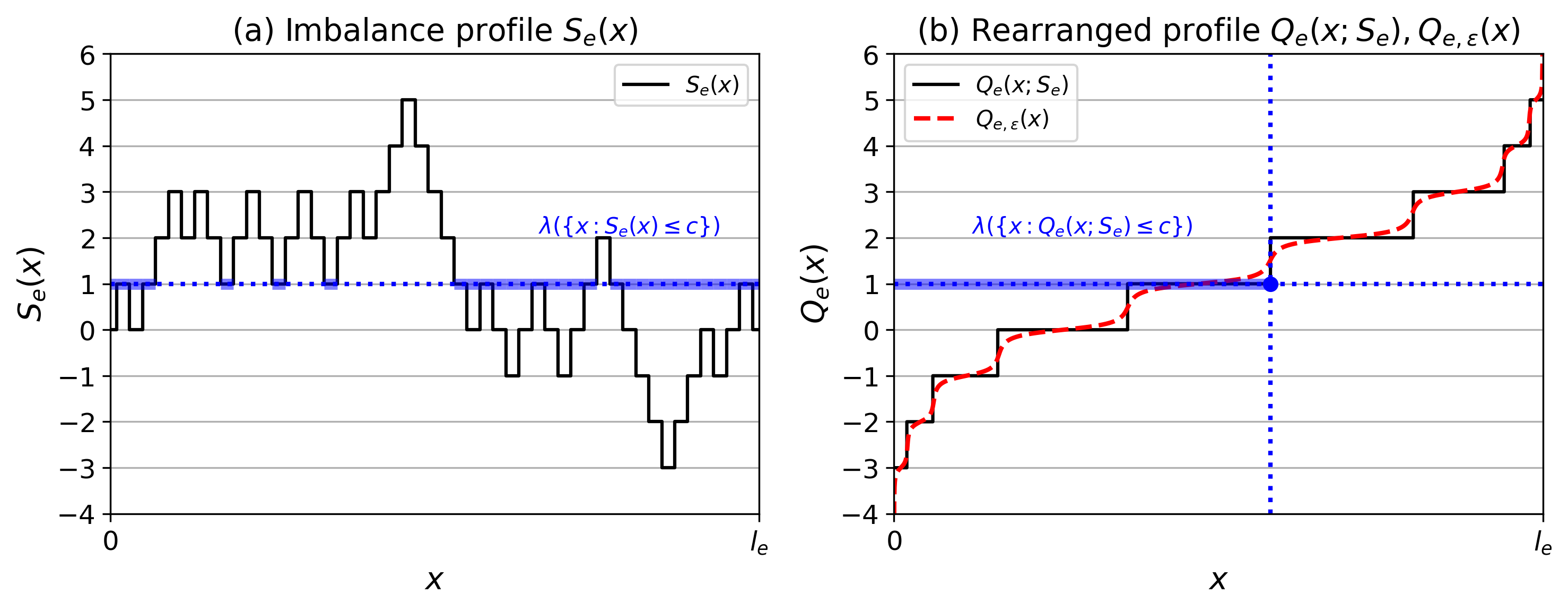}
\caption{Illustration of profiles $S_e, Q_e, Q_{e, \epsilon}$. }
\label{fig: vis_S_Q}
\end{figure}

To characterize and approximate $\{S_e(x), \forall (e,x)\}$, we introduce two additional functions that are associated with an edge profile $\xi_e$:  
\begin{align*}
&F_e(c; \xi_e) := \int_{x:\xi_e(x)\leq c}\d x, \quad
Q_e(t; \xi_e) := 
\inf \set{c: F_e(c; \xi_e)\geq t}.
\end{align*}
The first function $F_e:\mathbb{R}\to [0, l_e]$ gives the total length of 
edge $e$ where  $\xi_e(x) \leq c$.  It is easy to see that $F_e$ is non-decreasing and right-continuous. 
Since $F_e$ may not be differentiable, we use $F_e^-(c; \xi_e) := \lim_{y\uparrow c} F_e(y; \xi_e)$. Note that $Q_e$ is the generalized inverse of $F_e$, and geometrically, $Q_e(\cdot; \xi_e)$ gives the monotonic 
``cumulative distribution'' of the values in profile $\xi_e$. Since $\Psi_e$ depends only on the distribution of the profile values, i.e., the total edge length below each level $c_e$, replacing $\xi_e$ by $Q_e(\cdot; \xi_e)$ preserves the equivalent matching costs. For example, Figures \ref{fig: vis_S_Q}(a) and (b) visualize the distributional equivalence of $S_e$ and $Q_e(\cdot; S_e)$, 
because for any fixed $c$ value, the blue spans in the two figures have the same total length. 
Such a property is formalized in the following lemma: 

\begin{lemma}
\label{lemma: Se_eq_Qe}
For any $\xi_e$ and any $c\in \R$, 
\begin{align*}
\Psi_e(c; \xi_e) = \Psi_e(c; Q_e(\cdot; \xi_e)).
\end{align*}
\end{lemma}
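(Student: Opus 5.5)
The plan is to show that $\xi_e$ on $[0,l_e]$ and $Q_e(\cdot;\xi_e)$ on $[0,l_e]$ are equimeasurable. That is, I want to establish
\begin{align*}
F_e(y;\xi_e) = F_e\big(y; Q_e(\cdot;\xi_e)\big), \quad \forall y\in\R.
\end{align*}
Once this holds, the identity follows from a layer-cake representation of $\Psi_e$. Throughout I assume $\xi_e$ is measurable and integrable on $[0,l_e]$. This covers the integer-valued step profiles $S_e$.

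\textbf{Step 1 (equimeasurability).} By construction, $F_e(\cdot;\xi_e)$ is non-decreasing and right-continuous. Its limits are $0$ as $y\to-\infty$ and $l_e$ as $y\to\infty$. Write $Q(t)=Q_e(t;\xi_e)$ for $t\in(0,l_e]$.

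The key fact is the standard Galois-type equivalence
\begin{align*}
Q(t)\le y \iff t\le F_e(y;\xi_e).
\end{align*}
For the direction ``$\Leftarrow$'', note that if $t\le F_e(y)$, then $y$ belongs to the set whose infimum defines $Q(t)$, so $Q(t)\le y$. For the direction ``$\Rightarrow$'', if $Q(t)\le y$, then for every $\varepsilon>0$ there is $c<y+\varepsilon$ with $F_e(c)\ge t$. Monotonicity gives $F_e(y+\varepsilon)\ge t$, and right-continuity then gives $F_e(y)\ge t$.

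It follows that $\{t\in(0,l_e]: Q(t)\le y\}=(0,F_e(y;\xi_e)]$. This set has Lebesgue measure $F_e(y;\xi_e)$, which proves the claim. The single point $t=0$, where $Q$ may be $-\infty$, is a null set and can be ignored.

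\textbf{Step 2 (layer-cake for $\Psi_e$).} For a fixed $c$, I use the identity $|a-c|=\int_{-\infty}^{\infty}\mathbbm{1}\{a\le y<c \text{ or } c\le y<a\}\,\d y$. Integrating over $x$ and applying Tonelli's theorem (the integrand is nonnegative) gives
\begin{align*}
\Psi_e(c;\xi_e)=\int_{-\infty}^{c} F_e(y;\xi_e)\,\d y+\int_{c}^{\infty}\big(l_e-F_e(y;\xi_e)\big)\,\d y.
\end{align*}
This expression depends on $\xi_e$ only through the function $F_e(\cdot;\xi_e)$. The same formula applied to $Q_e(\cdot;\xi_e)$ involves $F_e(\cdot;Q_e)$. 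By Step 1 the two functions coincide, so the two values of $\Psi_e$ are equal.

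Integrability of $\xi_e$ makes both integrals finite. Step 1 then transfers this integrability to $Q_e$, since equimeasurable functions have equal $L^1$ norms.

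\textbf{Main obstacle.} The only delicate point is the equivalence in Step 1. It relies on right-continuity of $F_e$ and on the handling of the infimum, and it is where ties, meaning flat pieces of $\xi_e$ that create jumps in $F_e$, must be treated correctly. I would write out both directions of that equivalence explicitly. The remaining steps are routine applications of Fubini's theorem.
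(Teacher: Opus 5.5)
Your proof is correct and follows the same route as the paper. Both arguments use the generalized-inverse equivalence $Q_e(t;\xi_e)\le y \iff t\le F_e(y;\xi_e)$ to show that $\xi_e$ and $Q_e(\cdot;\xi_e)$ are equimeasurable, and conclude that the integrals agree. Your version is more complete in two places the paper only asserts: you prove both directions of the equivalence using right-continuity of $F_e$, and you justify the final step with the layer-cake formula $\Psi_e(c;\xi_e)=\int_{-\infty}^{c}F_e(y;\xi_e)\,\d y+\int_{c}^{\infty}(l_e-F_e(y;\xi_e))\,\d y$. One small edge case you did not mention: the endpoint $t=l_e$, where $Q_e$ can equal $+\infty$ if $\xi_e$ is unbounded above, is a null set and harmless, just like $t=0$.
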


\begin{proof}
By the generalized-inverse identity, for any $c\in \R$, 
\begin{align*}
Q_e(x; \xi_e)\leq c 
\quad \Leftrightarrow \quad
F_e(c; \xi_e) \geq x.
\end{align*}
Therefore,
$\lambda(\{x: Q_e(x;\xi_e)\leq c\}) = F_e(c; \xi_e)$, where $\lambda(\cdot)$ is the Lebesgue measure. 
By definition, 
$\lambda(\{x: S_e(x)\leq c\}) = F_e(c; S_e)$.
Hence, $\xi_e$ and $Q_e(\cdot; \xi_e)$ have the same total edge length under every level $c$. Since $F_e$ is non-decreasing, $Q_e(\cdot; \xi_e)$ is also non-decreasing. Therefore, $Q_e(\cdot; \xi_e)$ is a monotone rearrangement of $\xi_e$. Finally, the integrals over $\xi_e, Q_e$ are the same; i.e., 
\begin{align*}
\Psi_e(c; S_e) 
= \int_{0}^{l_e} |\xi_e(x) - c|\d x 
= \int_{0}^{l_e} |Q_e(x; \xi_e) - c|\d x  
= \Psi_e(c; Q_e(\cdot; \xi_e)).
\end{align*}
This completes the proof. 
\end{proof}

Lemma \ref{lemma: Se_eq_Qe} indicates that an equivalent formulation can be obtained by replacing the original profile $S_e$ by its monotone representation $Q_e(\cdot; S_e)$. This has two benefits. First, both the objective value and the optimality condition are preserved. 
Second, the rearranged representation $Q_e(\cdot; S_e)$ allows us to conveniently impose a smooth approximation, 
$Q_{e, \epsilon}$ controlled by a smoothing parameter $\epsilon > 0$. 
In this study, we assume that $Q_{e, \epsilon}$ is twice differentiable, strictly increasing, and that $Q_{e, \epsilon}$ converges to $Q_e(\cdot; S_e)$ in $L^1(0, l_e)$ as $\epsilon \downarrow 0$; i.e., 
\begin{align}
\label{eq: max_sup_S_minus_barS_leq_C}
Q_{e, \epsilon}\in \mathcal{C}^2([0, l_e]), \quad
Q_{e, \epsilon} > 0,
\quad
\lim_{\epsilon \downarrow 0} \max_{e\in \E} \|Q_{e, \epsilon}(\cdot) - Q_e(\cdot; S_e)\|_{L^1(0, l_e)} = 0.
\end{align} 
The red dashed curve in Figure \ref{fig: vis_S_Q}(b) shows an example of $Q_{e, \epsilon}$.

Now we propose the following approximate convex-flow formulation.
\begin{theorem}
\label{theorem: Z_ZCF_error_bounded}
The optimal total matching distance in a general network is approximated by the optimal objective value $Z$ of the following approximate formulation with respect to $\c$: 
\begin{equation}
\begin{aligned} 
\label{eq: smooth}
\min_{\mathbf{c}\in \mathbb{R}^{|\E|}} \quad & Z = \sum_{e\in \E} \Psi_e(c_e; Q_{e, \epsilon}), \quad
\text{s.t.} \quad \eqref{conserv}.
\end{aligned}    
\end{equation}    
with convergence as $\epsilon \downarrow 0$, 
\begin{align*}
\lim_{\epsilon \downarrow 0}|Z - Z^{\mathrm{CF}}| = 0.  
\end{align*}
\end{theorem}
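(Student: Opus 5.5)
The plan is to compare the two optimization problems \eqref{eq: smooth} and \eqref{eq: RLX} over the same feasible set. Both are defined by the same constraint \eqref{conserv}, and only the profile inside each $\Psi_e$ differs. By Theorems \ref{theorem: CF} and \ref{theorem: RLX}, it suffices to show $|Z - Z^{\mathrm{RLX}}| \to 0$.

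By Lemma \ref{lemma: Se_eq_Qe}, $Z^{\mathrm{RLX}}$ is also the optimal value of \eqref{eq: RLX} with each $S_e$ replaced by $Q_e(\cdot;S_e)$. Write $\Phi(\c) := \sum_e \Psi_e(c_e; Q_e(\cdot;S_e))$ and $\Phi_\epsilon(\c) := \sum_e \Psi_e(c_e; Q_{e,\epsilon})$. Then both values are minima of these functions over the common feasible set $\{\c : \I\c = \I^+\s\}$.

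The key step is a uniform bound. For every feasible $\c$, apply the Lipschitz estimate of Lemma \ref{lemma: Psi_e_Lipchitz} with $c_1 = c_2 = c_e$. Strictly, the direct pointwise inequality $\bigl||a-c|-|b-c|\bigr| \le |a-b|$ gives the sharper bound without the factor $l_e$:
\begin{align*}
|\Phi_\epsilon(\c) - \Phi(\c)| \le \sum_{e\in\E} \|Q_{e,\epsilon} - Q_e(\cdot;S_e)\|_{L^1(0,l_e)} \le |\E|\,\delta_\epsilon,
\end{align*}
where $\delta_\epsilon := \max_e \|Q_{e,\epsilon} - Q_e(\cdot;S_e)\|_{L^1(0,l_e)}$. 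This bound does not depend on $\c$.

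Next I would use the standard fact that two functions which are uniformly within $\eta$ on a set have infima within $\eta$ of each other. Concretely, take a minimizer $\c^*$ of $\Phi$, which exists because $\Phi$ is continuous and coercive: it grows like $\sum_e l_e|c_e|$. Similarly take a minimizer $\c^*_\epsilon$ of $\Phi_\epsilon$, which exists for the same reason since $Q_{e,\epsilon}$ is bounded on $[0,l_e]$. Then
\begin{align*}
Z \le \Phi_\epsilon(\c^*) \le Z^{\mathrm{RLX}} + |\E|\delta_\epsilon, \qquad Z^{\mathrm{RLX}} \le \Phi(\c^*_\epsilon) \le Z + |\E|\delta_\epsilon,
\end{align*}
so $|Z - Z^{\mathrm{CF}}| \le |\E|\,\delta_\epsilon \to 0$ by \eqref{eq: max_sup_S_minus_barS_leq_C}.

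The main (mild) obstacle is the existence of minimizers, or more precisely whether attainment matters. I would bypass it by working with infima and $\eta$-approximate minimizers, so that existence is never needed. A second point to check is that the Lipschitz constant in Lemma \ref{lemma: Psi_e_Lipchitz} must not blow up. Since $\max_e l_e$ is a fixed network constant and $|\E|$ is finite, even the weaker bound with $l_e$ still yields convergence. A rate of order $|\E|\max_e l_e\,\delta_\epsilon$ then follows as a byproduct.
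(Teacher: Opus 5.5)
Your proof is correct and follows the same route as the paper. You replace $S_e$ by $Q_e(\cdot;S_e)$ via Lemma \ref{lemma: Se_eq_Qe}, bound the two objectives uniformly over the common feasible set by $\sum_{e\in\E}\|Q_{e,\epsilon}-Q_e(\cdot;S_e)\|_{L^1(0,l_e)}$, and then invoke $Z^{\mathrm{RLX}}=Z^{\mathrm{CF}}$ (Theorem \ref{theorem: RLX}); your estimate $|\inf\Phi_\epsilon-\inf\Phi|\le\sup_{\c}|\Phi_\epsilon(\c)-\Phi(\c)|$ is the correct form of the paper's first inequality, which the paper writes with a $\min$ over $\c$ where a $\sup$ is needed.
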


\begin{proof}
We prove the theorem by showing the following:
\begin{align*}
&|Z-Z^{\mathrm{RLX}}|
= 
\left|\min_{\c} \sum_{e\in \E}\Psi_e(\c_e; Q_e(\cdot; S_e)) - \min_{\c}\sum_{e\in \E}\Psi_e(\c_e; Q_{e, \epsilon})\right| \\
&\le \min_{\c} 
\left|\sum_{e\in \E}\Psi_e(\c_e; Q_e(\cdot; S_e)) - \Psi_e(\c_e; Q_{e, \epsilon})\right| 
\leq  
\sum_{e\in \E}\int_{0}^{l_e} |Q_e(x; S_e) - Q_{e, \epsilon}(x)| \d x \to 0. 
\end{align*}
The first 
inequality follows the monotonicity of the minimum; the second inequality also follows Lemma \ref{lemma: Se_eq_Qe}, and the last convergence follows the assumptions in \eqref{eq: max_sup_S_minus_barS_leq_C}. 
From Theorem \ref{theorem: CF}, we know $Z^\mathrm{CF} = Z^\mathrm{RLX}$. 
This completes the proof. 
\end{proof}

The solution to such constrained optimization formulation can be easily obtained via analysis of its Lagrangian, as explained below. 
\begin{lemma}
\label{lemma: Psi_e_subdifferential}
For any fixed $\xi_e$ and any $c\in \R$, the subdifferential of $\Psi_e(c; \xi_e), \forall e\in \E$, is given by $\partial\Psi_e(c; \xi_e) = [2{F}_e^-(c; \xi_e)-l_e, 2{F}_e(c;\xi_e) - l_e]$.
\end{lemma}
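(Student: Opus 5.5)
The plan is to compute the one-sided derivatives of the convex function $c\mapsto\Psi_e(c;\xi_e)$ and then use the standard fact from convex analysis in one dimension. For a convex function $\phi:\R\to\R$, the subdifferential at $c$ is the interval $[\phi'_-(c),\phi'_+(c)]$ between its left and right derivatives. Convexity is already given by Lemma~\ref{lemma: Psi_e_Lipchitz}, and the Lipschitz bound there guarantees finiteness. So it suffices to show
\begin{align*}
\phi'_+(c) = 2F_e(c;\xi_e)-l_e,\qquad \phi'_-(c)=2F_e^-(c;\xi_e)-l_e,
\end{align*}
where $\phi(c):=\Psi_e(c;\xi_e)$.

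For the right derivative, I would take $h>0$ and form the difference quotient $\frac{1}{h}\int_0^{l_e}\big(|\xi_e(x)-c-h|-|\xi_e(x)-c|\big)\d x$. The integrand quotient $g_h(x)$ is bounded by $1$ in absolute value, by the triangle inequality. It converges pointwise as $h\downarrow 0$:
\begin{itemize}
\item $g_h(x)\to +1$ when $\xi_e(x)\le c$, since then $\xi_e(x)-c-h<0$ and $|\cdot|$ increases by exactly $h$;
\item $g_h(x)\to -1$ when $\xi_e(x)>c$, since for $h<\xi_e(x)-c$ the quotient equals $-1$.
\end{itemize}
Dominated convergence on the finite interval $[0,l_e]$ then gives
\begin{align*}
\phi'_+(c)=\lambda(\{\xi_e\le c\})-\lambda(\{\xi_e>c\}) = F_e(c;\xi_e)-(l_e-F_e(c;\xi_e)) = 2F_e(c;\xi_e)-l_e.
\end{align*}

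The left derivative is symmetric. For $h>0$, the quotient $\frac{1}{h}\big(\phi(c)-\phi(c-h)\big)$ has pointwise limit $+1$ on $\{\xi_e<c\}$ and $-1$ on $\{\xi_e\ge c\}$, the point $\xi_e(x)=c$ now contributing $-1$. This yields $\phi'_-(c)=2\lambda(\{\xi_e<c\})-l_e$. It remains to identify $\lambda(\{\xi_e<c\})$ with $F_e^-(c;\xi_e)=\lim_{y\uparrow c}F_e(y;\xi_e)$. This follows from continuity of measure from below, because $\{\xi_e<c\}=\bigcup_{y\uparrow c}\{\xi_e\le y\}$ is an increasing union.

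The main point needing care is the bookkeeping of the level set $\{\xi_e=c\}$. It has positive measure exactly where $F_e$ jumps, for example on the flat pieces of an integer-valued $S_e$. It is what makes the subdifferential a nondegenerate interval rather than a single point, and the sign convention there must be tracked correctly in the two one-sided limits. Everything else is routine dominated convergence, with measurability of $\xi_e$ assumed as implicit in the definition of $\Psi_e$.
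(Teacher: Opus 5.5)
Your proof is correct, but it takes a different route from the paper.

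The paper argues integrand by integrand. It writes down $\partial|\xi_e(x)-c|$, which is $\{1\}$, $[-1,1]$ or $\{-1\}$ according as $\xi_e(x)<c$, $=c$ or $>c$. It then asserts $\partial\Psi_e(c;\xi_e)=\int_0^{l_e}\partial|\xi_e(x)-c|\,\d x$ and evaluates this set-valued integral as $\lambda(\{\xi_e<c\})-\lambda(\{\xi_e>c\})+[-\lambda(\{\xi_e=c\}),\lambda(\{\xi_e=c\})]$, which rearranges to the stated interval.

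You instead compute $\phi'_+(c)$ and $\phi'_-(c)$ by dominated convergence. You then invoke the one-dimensional fact that $\partial\phi(c)=[\phi'_-(c),\phi'_+(c)]$ for a finite convex $\phi$.

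The paper's version is shorter and shows directly why the subdifferential is a nondegenerate interval: the level set $\{\xi_e=c\}$ contributes an arbitrary selection from $[-1,1]$. However, it uses the interchange of $\partial$ and $\int$ without justification. The inclusion $\int\partial\subseteq\partial\int$ is elementary, since one just integrates the pointwise subgradient inequality for a measurable selection. The reverse inclusion is the real content, normally supplied by a theorem on subdifferentials of convex integral functionals.

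Your argument gets both inclusions at once. It uses only the bound $|g_h|\le 1$, dominated convergence, and continuity of measure from below to identify $\lambda(\{\xi_e<c\})$ with $F_e^-(c;\xi_e)$, so it is fully self-contained. It also tracks the level set $\{\xi_e=c\}$ explicitly in the two one-sided limits. Its only limitation is that it is intrinsically scalar, which costs nothing here because $c$ is a single real number per edge.

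One small correction: finiteness of $\phi$ comes from $\xi_e\in L^1(0,l_e)$, which makes $\phi$ finite at one point. The Lipschitz bound of Lemma~\ref{lemma: Psi_e_Lipchitz} then propagates finiteness everywhere; it does not give finiteness on its own.
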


\begin{proof}
The subdifferential of $|\xi_e(x)-c|$ is simply
\begin{align*}
\partial |\xi_e(x)-c|= 
\begin{cases}
\set{-1}, & \quad \xi_e(x)> c,\\ 
[-1, 1], &\quad \xi_e(x)= c,\\ 
\set{1} &\quad \xi_e(x) < c.
\end{cases}
\end{align*}
Thus, the subdifferential of ${\Psi}_e(c; \xi_e)$ is:
\begin{align*}
\partial {\Psi}_e(c; \xi_e) &= 
\int_0^{l_e} \partial |\xi_e(x)-c|\d x\\
&= \int_{x:\xi_e(x)<c} 1\d x + \int_{x:\xi_e(x)=c} \partial|\xi_e(t)-c| \d x + \int_{x:\xi_e(x)>c} - 1 \d x.
\end{align*}
Since 
$-\int_{x:\xi_e(x)=c} \d x \leq \int_{x:\xi_e(x)=c} \partial |\xi_e(x)-c| \d x \leq \int_{x:\xi_e(x)=c} \d x, 
$ 
we have:
\begin{align*}
\partial \Psi_e(c; \xi_e) 
&= \int_{x:\xi_e(x)<c} \d x - \int_{x:\xi_e(x) >c} \d x
+ \left[-\int_{x:\xi_e(x)=c} \d x, \int_{x:\xi_e(x)=c} \d x\right]\\
&= \int_{x:\xi_e(x)<c} \d x - \left(l_e- \int_{x:\xi_e(x)< c} \d x - \int_{x:\xi_e(x)= c} \d x\right)
+ \left[-\int_{x:\xi_e(x)=c} \d x, \int_{x:\xi_e(x)=c} \d x\right]\\
&= \left[2\int_{x:\xi_e(x)<c} \d x-l_e, 2\int_{x:\xi_e(x)\leq c} \d x - l_e\right] = [2F_e^-(c;\xi_e)-l_e, 2F_e(c;\xi_e)-l_e].
\end{align*}
This completes the proof. 
\end{proof}

We denote $\c^*$ as the optimal solution to \eqref{eq: smooth}. The optimality condition is summarized below. 

\begin{theorem}
\label{theorem: KKT}
An optimal solution $\c^*$ and its corresponding Lagrangian multiplier $\Lambda^* \in \R^{|\V|}$ must satisfy:
\begin{align*}
\c^* = \Q(\Lam^*) \quad \text{and}\quad 0 = \I\Q(\Lam^*) - \mathbf{I^+{{s}}},
\end{align*}
where 
\begin{align*}
\Q(\Lam) = 
\left(Q_{e, \epsilon}\left(\frac{l_e-(\I^\tran\Lam)_e}{2}\right)\right)_{e\in \E}.
\end{align*}
\end{theorem}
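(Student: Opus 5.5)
We will derive the conditions from the Lagrangian of \eqref{eq: smooth}. The program minimizes a separable convex objective $\sum_{e}\Psi_e(c_e;Q_{e,\epsilon})$ under the linear equality constraints \eqref{conserv}. Convexity follows from Lemma \ref{lemma: Psi_e_Lipchitz}, and the feasible set is nonempty, since Theorem \ref{theorem: CF} supplies a feasible integral $\c$. Linear constraints satisfy a constraint qualification automatically, so for every optimum $\c^*$ there is a multiplier $\Lam^*\in\R^{|\V|}$ satisfying the KKT system
\begin{align*}
\mathbf{0} \in \partial_{\c}\Big(\sum_{e\in\E}\Psi_e(c_e;Q_{e,\epsilon})\Big)\Big|_{\c^*} + \I^\tran\Lam^*, \qquad \I\c^* = \I^+\s .
\end{align*}
The second relation is exactly $0=\I\Q(\Lam^*)-\I^+\s$ once we show $\c^*=\Q(\Lam^*)$. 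Because the objective is separable, its subdifferential is the product of the edgewise subdifferentials. The first relation therefore reduces to $-(\I^\tran\Lam^*)_e\in\partial\Psi_e(c_e^*;Q_{e,\epsilon})$ for each $e$.

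Next, Lemma \ref{lemma: Psi_e_subdifferential} gives this subdifferential explicitly. Since $Q_{e,\epsilon}$ is continuous and strictly increasing on $[0,l_e]$, each level set $\{x: Q_{e,\epsilon}(x)=c\}$ has at most one point, hence measure zero. So $F_e^-=F_e$, the subdifferential is the singleton $\{2F_e(c;Q_{e,\epsilon})-l_e\}$, and $\Psi_e$ is differentiable in $c$. Moreover, for $c$ in the range $[Q_{e,\epsilon}(0),Q_{e,\epsilon}(l_e)]$ we have $F_e(c;Q_{e,\epsilon})=Q_{e,\epsilon}^{-1}(c)$, because the set where $Q_{e,\epsilon}\le c$ is the interval $[0,Q_{e,\epsilon}^{-1}(c)]$.

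The stationarity condition then reads $2Q_{e,\epsilon}^{-1}(c_e^*)-l_e=-(\I^\tran\Lam^*)_e$. Applying $Q_{e,\epsilon}$ to both sides gives $c_e^*=Q_{e,\epsilon}\big((l_e-(\I^\tran\Lam^*)_e)/2\big)$, which is $\c^*=\Q(\Lam^*)$.

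The main obstacle is the case where $c_e^*$ falls outside the range of $Q_{e,\epsilon}$. There $F_e$ is constant at $0$ or $l_e$, so the derivative is $\mp l_e$ on a whole half-line and the inversion step breaks down. To handle this, we will first show that an optimum never lies strictly outside the range, by convexity. For $c<Q_{e,\epsilon}(0)$ the derivative is $-l_e$ (when $l_e>0$), so $\Psi_e$ is strictly decreasing there, and for $c>Q_{e,\epsilon}(l_e)$ it is strictly increasing. Using this edgewise is not enough, because the coupling constraint could in principle push $c_e$ outside the range. We therefore argue directly from the KKT relation. The required slope $-(\I^\tran\Lam^*)_e$ must belong to the range of $c\mapsto 2F_e(c)-l_e$, which is $[-l_e,l_e]$. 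This forces $(l_e-(\I^\tran\Lam^*)_e)/2\in[0,l_e]$, so $\Q(\Lam^*)$ is well defined.

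Within this range, the endpoint values $\pm l_e$ are attained on a half-line, so $c_e^*$ is not uniquely pinned down there. We resolve this by choosing the representative $c_e^*=Q_{e,\epsilon}(0)$ or $Q_{e,\epsilon}(l_e)$. This choice loses no optimality, since $\Psi_e$ is affine on that half-line, and it matches $\Q(\Lam^*)$. Alternatively, we can use the convention that $Q_{e,\epsilon}$ is extended strictly increasingly beyond $[0,l_e]$, which makes the inverse exact. Zero-length edges are handled separately: there $\Psi_e\equiv 0$ and $c_e$ is free, so the same convention applies.
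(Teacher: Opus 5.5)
Away from the saturated case, your argument is the paper's: take a KKT multiplier, read off $\partial\Psi_e$ from Lemma~\ref{lemma: Psi_e_subdifferential}, use strict monotonicity of $Q_{e,\epsilon}$ to get $F_e^-=F_e$, and invert. You also correctly spot a case the paper's proof passes over silently. If $c_e^*$ lies strictly outside $[Q_{e,\epsilon}(0),Q_{e,\epsilon}(l_e)]$, then $F_e(\cdot;Q_{e,\epsilon})$ is constant ($0$ or $l_e$) on a half-line containing $c_e^*$, so it cannot be inverted there.

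Your resolution of that case does not work. Replacing $c_e^*$ by $Q_{e,\epsilon}(0)$ or $Q_{e,\epsilon}(l_e)$ changes a single coordinate of $\mathbf{c}^*$, which breaks $\mathbf{I}\mathbf{c}=\mathbf{I}^+\mathbf{s}$. Affineness of $\Psi_e$ on the half-line tells you what the move costs, not that it is feasible. Even if some other optimum had that coordinate, the theorem is a claim about the given optimal $\mathbf{c}^*$. The extension convention does not help either. Stationarity forces $(l_e-(\mathbf{I}^{\mathrm{T}}\mathbf{\Lambda}^*)_e)/2=F_e(c_e^*;Q_{e,\epsilon})$ to equal exactly $0$ or $l_e$. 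Since $\Psi_e$ only sees $Q_{e,\epsilon}$ on $[0,l_e]$, any extension returns the original endpoint value. Your remark on zero-length edges has the same defect: $c_e^*$ is fixed by conservation, not free.

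In fact the saturated case cannot be patched, because the statement is false there. Take the path $v_1v_2v_3v_4$ with $e_k$ oriented from $v_k$ to $v_{k+1}$, one supply point inside $e_1$, and one demand point inside $e_3$. On a tree the conservation system has a unique solution, here with $c_{e_2}=-1$, so $\mathbf{c}^*$ is forced. But $S_{e_2}\equiv 0$. For the admissible choice $Q_{e_2,\epsilon}(x)=\epsilon(x-l_{e_2}/2)$ with $\epsilon l_{e_2}<2$, stationarity gives $(\mathbf{I}^{\mathrm{T}}\mathbf{\Lambda}^*)_{e_2}=l_{e_2}$. Hence $\mathbf{Q}(\mathbf{\Lambda}^*)_{e_2}=-\epsilon l_{e_2}/2\neq -1=c^*_{e_2}$, and both claimed relations fail.

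The same happens on any edge whose through-flow is large enough that $Q_{e,\epsilon}(x)-c_e^*$ has one strict sign on $[0,l_e]$; this forces $|(\mathbf{I}^{\mathrm{T}}\mathbf{\Lambda}^*)_e|=l_e$. What your argument actually proves is the conditional statement: $c_e^*=\mathbf{Q}(\mathbf{\Lambda}^*)_e$ on every edge with $|(\mathbf{I}^{\mathrm{T}}\mathbf{\Lambda}^*)_e|<l_e$. The full statement needs one of two things:
\begin{itemize}
\item that hypothesis on all edges; or
\item a smoothing that never saturates, such as the pseudo-Huber potential used in Section~\ref{sec: numerical}. Its $F_e$ is strictly increasing on all of $\mathbb{R}$, at the cost that $Q_{e,\epsilon}$ is then unbounded at $0$ and $l_e$ rather than in $\mathcal{C}^2([0,l_e])$.
\end{itemize}
State that restriction explicitly rather than claiming to cover the saturated case.
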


\begin{proof}
The edge-wise first-order conditions of the optimization problem are given by: 
\begin{align*}
&0\in \partial \Psi_e(\c_e^*; Q_{e, \epsilon}) + (\I^\tran\Lam^*)_e = [2{F}_e^-(\c_e^*; {Q}_{e, \epsilon}), 2{F}_e(\c_e^*; Q_{e, \epsilon})] - l_e + (\I^\tran\Lam^*)_e, \forall e. 
\end{align*}
Since $Q_{e, \epsilon}\in \mathcal{C}^2$ and $Q'_{e, \epsilon} > 0$, $F_e(\cdot; Q_{e, \epsilon})$ is continuous. 
This simplifies the optimality conditions to 
\begin{align}
\label{eq: KKT}
\frac{l_e-(\I^\tran\Lam^*)_e}{2} = F_e(\c_e^*; Q_{e, \epsilon})
\quad \Rightarrow \quad
\c_e^* = {Q}_e\left(\frac{l_e-(\I^\tran\Lam^*)_e}{2}; Q_{e, \epsilon}\right) = Q_{e, \epsilon}\left(\frac{l_e-(\I^\tran\Lam^*)_e}{2}\right), \forall e.
\end{align}
This proves the form of $\mathbf{c}^*$. Meanwhile, the rest of the theorem simply comes from the primal feasibility condition. 
This completes the proof. 
\end{proof}

The optimal solution $\c^*$ can be efficiently solved by a standard Newton method. Define the residual of the flow conservation as:  
\begin{align*}
\I\Q(\Lam) - \I^+\s.
\end{align*}
Its corresponding Jacobian with respect to $\Lam$ is: 
\begin{align*}
\nabla 
\left(\I{\Q}(\Lam)\right) 
= \I \nabla {\Q}(\Lam)
= -\frac{1}{2}\I\ \mathrm{diag}\left(\left({Q}'_{e, \epsilon}\left(\frac{l_e- (\I^\tran \Lam)_e}{2}\right)\right)_{e\in \E}\right)\ \I^\tran,
\end{align*}
where the diagonal term represents an edge ``conductance" matrix.\footnote{More about this matrix will be discussed in the next section.} 
In a general network with points sorted on each edge, each Newton iteration depends only on the network topology (and is independent of $n$). As such, the approximate convex-flow formulation \eqref{eq: smooth} can be solved in linear time complexity $O(n)$.  
Once $\c^*$ is obtained, we can construct a feasible matching solution to \eqref{eq: CF} based on the following two-step heuristic algorithm. 
\begin{enumerate}
    \item[Step 1] We define a new vector $\c^{\mathrm{rnd}}$ by rounding each element of $\c^*$ to the nearest integer, such that $|\c_e^{\mathrm{rnd}} - \c^*_e| \leq \frac{1}{2}, \forall e$. If $\c^\mathrm{rnd}$ satisfies the flow conservation constraints, then it is already a feasible solution to \eqref{eq: CF};
    \item[Step 2] If $\c^\mathrm{rnd}$ does not satisfy flow conservation, we heuristically perturb it into one, denoted $\c^{\mathrm{fes}}$, that satisfies both integrality and flow conservation. In so doing, we initially identify from $\c^\mathrm{rnd}$ two node subsets: $\V^+ = \set{v: \Delta_v > 0, v\in \V}$ and $\V^- = \set{v: \Delta_v < 0, v\in \V}$, and iteratively update them. In each iteration, we arbitrary pick two nodes, $u\in \V^-$ and $v\in \V^+$, and send a unit of flow along an arbitrary $u-v$ path. For each edge $e$ within the path, the value of $\c^{\mathrm{rnd}}_e$ increases by one unit if the path direction matches the edge direction; or otherwise it decreases by one unit. This operation does not affect flow conservation at any nodes other than $u, v$, and hence it decreases both the total deficit and total surplus values by one unit each. Repeat this operation $\frac{1}{2}\|\Delta\|_1$ times, we shall get $\Delta = 0$ and an integral feasible solution $\c^\mathrm{fes}$. 
\end{enumerate}
If points are already sorted on each edge, the above heuristic has a desirable $O(n)$ time complexity. The resulted feasible solution $\c^{\mathrm{fes}}$ has a bounded optimality gap, as shown in the theorem below. 

\begin{theorem}
\label{theorem: c*_to_IP}
Suppose $\c^\mathrm{int}$ is the optimal solution to \eqref{eq: CF}. Then 
\begin{align*}
0 \leq 
\sum_{e\in \E} \Psi_e(\c^\mathrm{fes}; S_e) - \sum_{e\in \E}\Psi_e(\c^\mathrm{int}; S_e) 
\leq
\left(\frac{|\E|}{2} + \frac{1}{2}\right) L + o(1).
\end{align*}
\end{theorem}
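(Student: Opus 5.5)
The lower bound $0\le\cdots$ is immediate: $\c^\mathrm{fes}$ is integral and satisfies \eqref{conserv}, so it is feasible for \eqref{eq: CF}, while $\c^\mathrm{int}$ is optimal there. For the upper bound I would telescope through the smoothed optimum and split the gap into three pieces:
\begin{align*}
\sum_{e}\Psi_e(c^\mathrm{fes}_e;S_e)-Z^\mathrm{CF}
&=\Big[\sum_e\Psi_e(c^\mathrm{fes}_e;S_e)-\sum_e\Psi_e(c^\mathrm{rnd}_e;S_e)\Big]
+\Big[\sum_e\Psi_e(c^\mathrm{rnd}_e;S_e)-\sum_e\Psi_e(c^*_e;S_e)\Big]\\
&\quad+\Big[\sum_e\Psi_e(c^*_e;S_e)-Z^\mathrm{CF}\Big].
\end{align*}
Each bracket is controlled by the Lipschitz property in Lemma \ref{lemma: Psi_e_Lipchitz}. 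Read with the profile fixed, that property gives $|\Psi_e(c_1;\xi)-\Psi_e(c_2;\xi)|\le l_e|c_1-c_2|$, since the integrand changes by at most $|c_1-c_2|$ over a length $l_e$.

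For the third bracket, Lemma \ref{lemma: Se_eq_Qe} lets me replace $S_e$ by $Q_e(\cdot;S_e)$. Lemma \ref{lemma: Psi_e_Lipchitz} then gives
\begin{align*}
\sum_e\Psi_e(c^*_e;S_e)\le Z+\sum_e\|Q_{e,\epsilon}-Q_e(\cdot;S_e)\|_{L^1}.
\end{align*}
By Theorem \ref{theorem: Z_ZCF_error_bounded} we have $Z\to Z^\mathrm{CF}$, so this bracket is $o(1)$ as $\epsilon\downarrow0$. For the second bracket, Step 1 guarantees $|c^\mathrm{rnd}_e-c^*_e|\le\frac12$, so the bracket is at most $\frac12\sum_e l_e=\frac12L$. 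This is where the $\frac12 L$ term in the statement comes from.

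The first bracket, the repair in Step 2, is the main obstacle. Each augmenting path is simple, so it changes each $c_e$ by at most one per iteration. The cost of the repair is therefore at most $\sum_e l_e\,|c^\mathrm{fes}_e-c^\mathrm{rnd}_e|$, and I need to bound the number of repair paths. Since $\I\c^*=\I^+\s$ exactly, the imbalance is $\Delta=\I(\c^\mathrm{rnd}-\c^*)$. Each column of $\I$ has two nonzero entries of size one, so each edge contributes at most $\frac12\cdot 2=1$ to $\|\Delta\|_1$. This gives $\|\Delta\|_1\le|\E|$, so at most $\frac12|\E|$ paths are sent. Charging each path its total length, which is at most $L$, yields a repair cost of at most $\frac{|\E|}{2}L$.

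Summing the three brackets gives the claimed bound $\left(\frac{|\E|}{2}+\frac12\right)L+o(1)$. The delicate point is that $\|\Delta\|_1$ must be bounded by $|\E|$ rather than $2|\E|$. If that step fails, I would refine it by noting that $\Delta$ is integral and each edge's rounding error is at most $\frac12$ on each endpoint. It is also possible to route paths so that an edge is not reused more than necessary, but the crude per-path bound by $L$ already suffices for the stated constant.
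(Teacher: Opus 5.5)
Your proposal is correct and is essentially the paper's proof. It uses the same $\tfrac12 L$ rounding bound from Lipschitz continuity in $c$, the same estimate $\|\Delta\|_1=\|\I(\c^*-\c^{\mathrm{rnd}})\|_1\le|\E|$ with each repair path costing at most $L$, and the same $o(1)$ smoothing error via Lemma \ref{lemma: Se_eq_Qe} and Theorem \ref{theorem: Z_ZCF_error_bounded}. The only difference is cosmetic: you hold the profile $S_e$ fixed in the Lipschitz steps and pay the swap to $Q_{e,\epsilon}$ once at $\c^*$, whereas the paper works under $Q_{e,\epsilon}$ and swaps profiles at both $\c^{\mathrm{fes}}$ and $\c^*$ in a four-term triangle inequality. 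You also justify the lower bound $0\le\cdots$ explicitly, which the paper leaves implicit.
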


\begin{proof}
From Lemma \ref{lemma: Psi_e_Lipchitz}, $\Psi_e$ is $l_e$-Lipchitz continuous, and hence 
\begin{align*}
\bigg|\sum_{e\in \E}\Psi_e(\c_e^*; Q_{e, \epsilon}) - \Psi_e(\c_e^\mathrm{rnd}; Q_{e, \epsilon})\bigg|
\leq \sum_{e\in \E}l_e|\c_e^* - \c_e^\mathrm{rnd}|
\leq \frac{L}{2}. 
\end{align*}
If $\c_e^{\mathrm{rnd}}$ satisfied the flow conservation constraints, then the above is already the optimality gap. Otherwise, we define a residual vector 
\begin{align*}
\Delta = \I^+{\s} -  \I\c^\mathrm{rnd} \in \mathbb{Z}^{|\V|},
\end{align*}
where a positive value of $\Delta_{v}$ 
indicates a deficit at node $v\in \V$ and a negative value indicates a surplus. 
With balanced supply/demand points, it is trivial to see that $\one^\tran \Delta = 0$, where $\one$ is a conformable vector of 1's. 
Notice that $\I\c^* = \I^+\s$, and thus the 1-norm of the residual vector 
\begin{align*}
\|\Delta\|_1 = \|\I(\c^* - \c^\mathrm{rnd})\|_1 =  
\sum_{v\in \V} \bigg|\sum_{e\in \delta^+(v)}(\c_e^* - \c_e^\mathrm{rnd}) -  
\sum_{e\in \delta^-(v)}(\c_e^* - \c_e^\mathrm{rnd})\bigg|
\leq
\sum_{v\in \V}\frac{\deg(v)}{2} = |\E|.  
\end{align*}
In the heuristic iterations, each unit of added flow travels at most $L$ distance. Thus, 
\begin{align*}
\bigg|\sum_{e\in \E}\Psi_e(\c_e^\mathrm{rnd}; Q_{e, \epsilon}) - \Psi_e(\c_e^\mathrm{fes}; Q_{e, \epsilon})\bigg|
&\leq \frac{\|\Delta\|_1}{2}L \leq \frac{|\E|}{2}L.
\end{align*}
Now, 
from Theorem \ref{theorem: Z_ZCF_error_bounded} and triangle inequality, we have
\begin{align*} 
&
\bigg|\sum_{e\in \E}\Psi_e(\c_e^{\mathrm{fes}}; S_e) - \Psi_e(\c_e^\mathrm{int}; S_e)\bigg| 
= 
\bigg|\sum_{e\in \E}\Psi_e(\c_e^{\mathrm{fes}}; Q_e(\cdot; S_e)) - \Psi_e(\c_e^\mathrm{int}; Q_e(\cdot; S_e))\bigg| \\
\leq &
\bigg|\sum_{e\in \E}\Psi_e(\c_e^\mathrm{fes}; Q_e(\cdot; S_e)) - \Psi_e(\c_e^\mathrm{fes};Q_{e, \epsilon})\bigg|
+
\bigg|\sum_{e\in \E}\Psi_e(\c_e^\mathrm{fes}; Q_{e, \epsilon}) - \Psi_e(\c_e^\mathrm{rnd}; Q_{e, \epsilon})\bigg|\\
&+
\bigg|\sum_{e\in \E}\Psi_e(\c_e^\mathrm{rnd}; Q_{e, \epsilon}) - \Psi_e(\c_e^*; Q_{e, \epsilon})\bigg|
+
\bigg|\sum_{e\in \E}\Psi_e(\c_e^*; Q_{e, \epsilon}) - \Psi_e(\c_e^\mathrm{int}; Q_e(\cdot; S_e))\bigg|
\leq \left(\frac{|\E|}{2} + \frac{1}{2}\right)L + o(1).
\end{align*}
This completes the proof.
\end{proof}

\subsection{Electricity network analogy, and a faster algorithm}
\label{sec: electrict_network}
In this section, 
we show that  
the nonlinear optimality conditions in Theorem \ref{theorem: KKT} 
has a linearized form that mimics a well-known electric network voltage-current problem. 
%
To see this, note that $Q_{e, \epsilon} \in \mathcal{C}^2$ and is bounded, and hence its Taylor expansion at a reference point $x_{e0}\in [0, l_e]$ can be written as
\begin{align*}
c_e^* = Q_{e, \epsilon}\left(\frac{l_e - (\I^\tran\Lam^*)_e}{2}\right) = &  Q_{e, \epsilon}\left(x_{e0}\right) + Q_{e, \epsilon}'\left(x_{e0}\right)\left(\frac{l_e - (\I^\tran\Lam^*)_e}{2} - x_{e0}\right)\\
& + O\left(\left(\frac{l_e-(\I^\tran\Lam^*)_e}{2}-x_{e0}\right)^2\right). 
\end{align*}

For any multiplier vector $\tilde{\Lam}\in \R^{|\V|}$ (e.g., an approximation of $\Lam^*$), define the associated flow $\tilde{\c} = (\tilde{c}_e)_{e \in \E}$ where:
\begin{align*}
\tilde{c}_e = Q_{e, \epsilon}\left(x_{0, e}\right) + Q_{e, \epsilon}'(x_{0, e})\left(\frac{l_e-(\I^\tran\tilde{\Lam})_e}{2} - x_{0, e}\right) , \forall e\in \E.
\end{align*} 
Based on vector $\x_0 := (x_{0, e})_{e\in \E}$, we further define the intercept or ``baseline" vector $\c_0$, the diagonal ``resistance" matrix, 
$\Res$, and the ``weighted Laplacian" matrix $\Lap$ as follows:
\begin{align*}
\c_0(\x_0) = \left(Q_{e, \epsilon}\left(x_{0, e}\right)\right)_{e\in \E}, \quad 
\Res(\x_0) = \operatorname{diag}\!\left( \left(1/Q_{e, \epsilon}'(x_{0, e})\right)_{e\in\E} \right), 
\quad
\Lap(\x_0) =  \I(\Res(\x_0))^{-1}\I^\tran.
\end{align*}
Now, we choose the reference points to be the edges' midpoints (i.e., $\x_0 := \l/2$), and denote 
\begin{align*}
\c_0^* := \c_0(\l/2), \quad \Res^* := \Res(\l/2), \quad
\Lap^* := \Lap(\l/2),
\end{align*}
and then 
\begin{align}
\label{eq: tilde_c}
\tilde{\c} = \c_0^* - \frac{1}{2}(\Res^*)^{-1}\I^\tran\tilde{\Lam}. 
\end{align}
Meanwhile, the
flow conservation constraints $\I\tilde{\c} = \I^+\s$ becomes
\begin{align}
\label{eq: c_tilde}
& \I\c_0^*-\frac{1}{2}\I(\Res^*)^{-1}\I^\tran\tilde{\Lam} = \I^+\s, \quad \Rightarrow\quad 
 \tilde{\Lam} = 2(\Lap^*)^{\dagger} \ (\I\c_0^*-\I^+\s) = 2(\Lap^*)^\dagger (\I\c_0^*-\I^+\s),
\end{align}
where $\dagger$ represents the Moore-Penrose inverse.
Plugging Equation \eqref{eq: c_tilde} back into Equation \eqref{eq: tilde_c}, we have:
\begin{align*}
\tilde{\c} 
=\c_0^* - (\Res^*)^{-1}\I^\tran\left(\Lap^*\right)^\dagger\ (\I\c_0^* - \I^+\s). 
\end{align*}
It is easy to see that $\tilde{\c}$ is the unique solution to a quadratic minimum-energy electricity flow problem: 
\begin{align*}
\min_{\c\in \R^{|\V|}} \quad & \frac{1}{2}(\c-\c_0^*)^\tran \Res^*(\c-\c_0^*),\\
\text{s.t. } \quad &\I\c = \I^+\s,
\end{align*}
and it yields a minimum objective value of 
\begin{align}
\label{eq: linear_min_obj}
\frac{1}{2} (\I\c_0^*-\I^+\s)^\tran(\Lap^*)^\dagger(\I\c_0^*-\I^+\s).
\end{align}
Note that $\c_0^*$ has a natural interpretation in the context of electricity flow problem --- the optimal current solution when the flow conservation constraint is dropped; i.e., in this case, 
the first-order optimality condition gives the optimal solution
\begin{align*}
\c^* = \c_0^*.
\end{align*}
In other words, $\c_e^* = Q_{e, \epsilon}(l_e/2)$ is precisely the uncoupled optimal boundary flow for each edge $e\in \E$. When the conservation constraint $\I\c = \I^+\s$ is imposed, the global optimizer $\c^*$ becomes a coupled perturbation around this local optimum, and the magnitude of this perturbation is governed by the slope $Q_{e, \epsilon}'(l_e/2)$. 

In the bipartite matching context, it quantifies how much ``point mass," captured by $S_e$, lies around its local optimum. 
A large $Q_{e, \epsilon}'(l_e/2)$ implies that the variation of $Q_e$ value near $l_e/2$ is large, and hence the local growth of $F_e$ is small. Since $F_e$ measures how vertex ``imbalance" accumulates along the edge, it acts as a local ``density" of imbalance $S_e$. Thus, a slow growth of $F_e$ represents a relatively even distribution of points over the edge, and small deviation of $\c_e$ incurs a small marginal cost. Hence, we call the term, $Q'_{e, \epsilon}(l_e/2)$, analogous to electricity current conductance, the ``local conductance" of the edge. Its reciprocal is called the ``local resistance" of the edge. 
Besides, we can make other analogies as well. 
Multiplier $\Lam^*$ plays the role of node potential, and its value change across each edge is given by $\I^\tran\Lam^*$. This is similar to the relationship between voltage and current; i.e., the boundary-flow vector $\c^*$ then satisfies the ``Ohm's law:" current = conductance $\times$ voltage drop. Moreover, the conservation constraint becomes the ``Kirchhoff's law," where $-\frac{1}{2}\I(\Res^*)^{-1}\I^\tran\Lam^* = \I^+\s$, stating that the net injection of voltage at nodes equals the divergence of current. 

We next quantify the accuracy of the linear approximation, and shows how the linearization error vanishes as $\|\l\|_\infty$ decreases.
\begin{theorem}
\label{theorem: c*_minus_ctilde_conv_0}
$\|\c^* - \tilde{\c}\|_\Res\to 0$ as $\|\l\|_\infty \to 0$, where $\|\mathbf{\cdot}\|_\Res = \sqrt{(\cdot)^{\mathrm{T}}\Res (\cdot)}$. 
\end{theorem}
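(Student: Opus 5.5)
Both $\c^*$ and $\tilde{\c}$ solve closely related convex problems over the same affine set $\{\c:\I\c=\I^+\s\}$. I would compare them through strong convexity in the $\Res^*$-metric rather than by solving the nonlinear KKT system of Theorem \ref{theorem: KKT} directly. Write $G_e(c):=\Psi_e(c;Q_{e,\epsilon})$. Since $Q_{e,\epsilon}$ is $\mathcal{C}^2$ and strictly increasing, Lemma \ref{lemma: Psi_e_subdifferential} gives $G_e'(c)=2F_e(c;Q_{e,\epsilon})-l_e$. Here $F_e(\cdot;Q_{e,\epsilon})=Q_{e,\epsilon}^{-1}$ on the range of $Q_{e,\epsilon}$, so $G_e''(c)=2/Q_{e,\epsilon}'(Q_{e,\epsilon}^{-1}(c))$. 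At the reference point $c_{0,e}=Q_{e,\epsilon}(l_e/2)$ this equals $2\Res^*_{ee}$.

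The quadratic surrogate $\frac12(\c-\c_0^*)^\tran\Res^*(\c-\c_0^*)$ whose minimiser is $\tilde{\c}$ is exactly $\frac14$ of the second-order Taylor model $\sum_e [G_e(c_{0,e})+\frac12 G_e''(c_{0,e})(c_e-c_{0,e})^2]$. The first-order term vanishes because $G_e'(c_{0,e})=2(l_e/2)-l_e=0$. So $\tilde{\c}$ minimises the quadratic model $\hat G$ of $G=\sum_e G_e$ around the uncoupled optimum $\c_0^*$, and $\c^*$ minimises $G$ itself on the same affine set.

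The key steps are as follows.

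First, write the first-order conditions of both problems on the affine set. With $\mathbf{d}:=\c^*-\tilde{\c}\in\ker\I$, we get $\nabla G(\c^*)^\tran\mathbf{d}=0$ and $\nabla\hat G(\tilde{\c})^\tran\mathbf{d}=0$.

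Second, subtract these conditions. Using $\nabla\hat G(\c)=2\Res^*(\c-\c_0^*)$, this gives
\begin{align*}
2\|\mathbf{d}\|_{\Res^*}^2=\left(\nabla\hat G(\c^*)-\nabla G(\c^*)\right)^\tran\mathbf{d}\le \|\Res^{*-1/2}(\nabla\hat G(\c^*)-\nabla G(\c^*))\|_2\,\|\mathbf{d}\|_{\Res^*}.
\end{align*}

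Third, bound the gradient mismatch edgewise by Taylor's theorem. We have $G_e'(c)-2\Res^*_{ee}(c-c_{0,e})=2[Q_{e,\epsilon}^{-1}(c)-l_e/2-(c-c_{0,e})/Q'_{e,\epsilon}(l_e/2)]$. Substituting $c_e^*=Q_{e,\epsilon}(x_e)$ with $x_e=(l_e-(\I^\tran\Lam^*)_e)/2\in[0,l_e]$ rewrites this as $2[(x_e-l_e/2)-(Q_{e,\epsilon}(x_e)-Q_{e,\epsilon}(l_e/2))/Q'_{e,\epsilon}(l_e/2)]$. The mean value theorem then bounds it by $|x_e-l_e/2|\cdot\sup_{[0,l_e]}|Q'_{e,\epsilon}-Q'_{e,\epsilon}(l_e/2)|/Q'_{e,\epsilon}(l_e/2)\le l_e\cdot\omega_e(l_e)$, where $\omega_e$ is a relative modulus of continuity of $Q'_{e,\epsilon}$ controlled by $\|Q''_{e,\epsilon}\|_\infty l_e/Q'_{e,\epsilon}(l_e/2)$. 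The $\mathcal{C}^2$ assumption makes this $O(l_e^2)$ per edge.

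Fourth, conclude: $\|\mathbf{d}\|_{\Res^*}\le \frac12\big(\sum_e \Res^{*-1}_{ee}\,O(l_e^4)\big)^{1/2}$. This tends to $0$ as $\|\l\|_\infty\to0$, since the number of edges, $\Res^*$ and the $\mathcal{C}^2$ norms are treated as bounded along the limit. The statement's $\Res$ is read as $\Res^*$.

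The main obstacle is the uniformity in the third step. As $\|\l\|_\infty\to0$, the profiles $Q_{e,\epsilon}$ change with the network, so I need $\|Q''_{e,\epsilon}\|_\infty$ bounded and $Q'_{e,\epsilon}$ bounded below uniformly in $e$ (equivalently, bounded resistances). Otherwise the relative Taylor error need not shrink. I would state this explicitly as a standing regularity assumption implicit in \eqref{eq: max_sup_S_minus_barS_leq_C}. If it is unavailable, I would fall back on the weaker bound from the second step, using only that $x_e,l_e/2\in[0,l_e]$ forces $|x_e-l_e/2|\le l_e/2$ together with continuity of $Q'_{e,\epsilon}$, which still gives an $o(1)$ bound.
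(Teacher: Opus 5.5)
Your proof is correct, and it takes a genuinely different, primal route.

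\textbf{The paper's route.} The paper stays with the dual form of the KKT system. It Taylor-expands $Q_{e,\epsilon}$ at $l_e/2$ to write $\c^*=\c_0^*-\frac12\Res^{-1}\I^\tran\Lam^*+\r$, with $|r_e|\le\frac18\sup|Q''_{e,\epsilon}|\,l_e^2$. It then subtracts the exact linear relation for $\tilde{\c}$ and tests the resulting conservation identity against $\Lam^*-\tilde{\Lam}$, which gives $\|\I^\tran(\Lam^*-\tilde{\Lam})\|_{\Res^{-1}}\le 2\|\r\|_\Res$. It finishes with the triangle inequality: $\|\c^*-\tilde{\c}\|_\Res\le 2\|\r\|_\Res$.

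\textbf{Your route.} You expand the primal gradient $G_e'=2F_e(\cdot;Q_{e,\epsilon})-l_e$ and treat $\tilde{\c}$ as the minimiser of the second-order model of $G$ at the uncoupled optimum $\c_0^*$. You then compare the two first-order conditions on $\ker\I$.

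\textbf{Comparison.} Both proofs control the same object: your edgewise mismatch $G_e'(c_e^*)-2\Res^*_{ee}(c_e^*-c_{0,e})$ is exactly $-2\Res^*_{ee}r_e$. Your variational identity then reads $\|\mathbf{d}\|_{\Res^*}^2=\r^\tran\Res^*\mathbf{d}$. This uses the fact that $\c^*-\tilde{\c}$ is the $\Res^*$-orthogonal projection of $\r$ onto $\ker\I$, so you get $\|\c^*-\tilde{\c}\|_{\Res^*}\le\|\r\|_{\Res^*}$. That is sharper by a factor $2$ than the paper's triangle-inequality bound. Your argument also never compares $\Lam^*$ with $\tilde{\Lam}$ or invokes $(\Lap^*)^\dagger$. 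What the paper's route buys is economy inside its own framework. It reuses $\c^*=\Q(\Lam^*)$ from Theorem \ref{theorem: KKT} and formula \eqref{eq: tilde_c} directly. Its remainder is a plain Taylor remainder of $Q_{e,\epsilon}$ rather than your mean-value/modulus-of-continuity estimate.

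\textbf{Minor points.}
\begin{itemize}
\item The quadratic part of your Taylor model is $(\c-\c_0^*)^\tran\Res^*(\c-\c_0^*)$, so the electric-flow objective is half of it, not a quarter. Your formula $\nabla\hat G=2\Res^*(\c-\c_0^*)$ is nonetheless correct, and the minimiser is unaffected.
\item Your fallback does not actually remove the uniformity requirement. Continuity of each $Q'_{e,\epsilon}$ gives nothing uniform as the edges, and hence the profiles, change with $\|\l\|_\infty$. Without a lower bound on $Q'_{e,\epsilon}(l_e/2)$, the weighted mismatch $2\Res^{*1/2}_{ee}|r_e|$ need not shrink.
\end{itemize}
The uniform bounds on $Q''_{e,\epsilon}$ and $\Res^*$ that you make explicit are exactly what the paper uses tacitly in its step ``this implies $\|\r\|_\Res\to0$.''
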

 
\begin{proof}
We first claim that for all $e\in \E$, $|(\I^\tran\Lam^*)_e|\leq \|\l\|_\infty$. By Equation \eqref{eq: KKT}, 
\begin{align*}
&0 = 2F_e(\c_e^*; Q_{e, \epsilon}) - l_e + (\I^\tran\Lam^*)_e. 
\end{align*}
Since $F_e(\c_e^*; Q_{e, \epsilon}) \in [0, l_e]$, we obtain $|(\I^\tran\Lam^*)_e|\leq l_e \leq \|\l\|_\infty$. 
By Taylor's expansion, 
\begin{align*}
\c^* = \c_0 - \frac{1}{2}\Res^{-1}\I^\tran\Lam^* + \r,
\end{align*}
where $\r\in \R^{|\E|}$ is the remainder vector of the higher-order term indexed by edges $e\in \E$, and satisfies
\begin{align*}
\r_e \leq \frac{\sup_{x\in [0, l_e]} {Q}''_{e, \epsilon}(x)}{8}(\I^\tran\Lam^*_e)^2
\leq \frac{\max_{e\in \E}\sup_{x\in [0, l_e]} Q''_{e, \epsilon}(x)}{8}\|\l\|_\infty^2, \quad\forall e\in \E. 
\end{align*}
This implies that $\|\r\|_{\Res} \to 0$ as $\|\l\|_\infty \to 0$. Moreover, both $\c^*$ and $\tilde{\c}$ satisfy the flow conservation equation, such that
\begin{align*}
\I\c^* - \I\tilde{\c}
= -\frac{1}{2}\I\Res^{-1}\I^\tran(\Lam^*-\tilde{\Lam}) + \I\r = 0.
\end{align*}
Left multiplying both sides of the second equality by $(\Lam^* - \tilde{\Lam})^\tran$, and rearranging the terms, we have the first equality below:
\begin{align*}
\|\I^\tran(\Lam^*-\tilde{\Lam})\|^2_{\Res^{-1}}
& =
2\left(\sqrt{\Res^{-1}}\I^\tran(\Lam^* - \tilde{\Lam})\right)^\tran\left(\sqrt{\Res} \r\right)\\
& \leq 2 \|\sqrt{\Res^{-1}}\I^\tran(\Lam^* - \tilde{\Lam})\|_2 \ 
\|\sqrt{\Res} \r\|_2
= 2 \|\I^\tran(\Lam^* - \tilde{\Lam})\|_{\Res^{-1}}\|\r\|_{\Res}.
\end{align*}
The second line above follows the 
Cauchy-Schwarz inequality. 
Hence,
\begin{align*}
\|\I^\tran(\Lam^*-\tilde{\Lam})\|_{\Res^{-1}}
\leq 2\|\r\|_{\Res}.
\end{align*}
Finally, since $\c^* - \tilde{\c}
= -\frac{1}{2}\Res^{-1}\I^\tran(\Lam^*-\tilde{\Lam}) + \r$, Cauchy-Schwarz inequality again gives the first inequality below:
\begin{align*}
\|\c^* - \tilde{\c}\|_{\Res}
&\leq
\frac{1}{2}\|\Res^{-1}\I^\tran(\Lam^*-\tilde{\Lam})\|_\Res + \|\r\|_{\Res}
= 
\frac{1}{2}\|\I^\tran(\Lam^*-\tilde{\Lam})\|_{\Res^{-1}} + \|\r\|_{\Res}
\leq 2\|\r\|_{\Res}.
\end{align*}
Thus, as $\|\l\|_\infty\to 0$, $\|\c^*-\tilde{\c}\|_\Res \to 0$. This completes the proof.
\end{proof}

Theorem \ref{theorem: c*_minus_ctilde_conv_0} not only explains how the linearization error decreases with the maximum edge length in the network, but also motivates a simple edge-partition operation that can be used to control the linear approximation error $\|\c^* - \tilde{\c}\|_{\Res}$ to an arbitrary tolerance $\delta$. In so doing, we can simply compute the matrix trace, $\mathrm{tr}(\Res)$, and find a large value $M \geq \frac{\|\l\|_\infty}{2\delta^{1/2}} (\max_{e\in \E}\sup_{x\in [0, l_e]}Q''_{e, \epsilon}(x)\mathrm{tr}(\Res)^{1/2})^{1/2}$.
Then, every edge $e 
\in \E$ can be partitioned into $M$ sub-edges, each with an equal length $l_e/M$, by inserting new nodes to the edge. 
This operation does not change the underlying metric network or the random point realization process, and hence the distribution of the total matching cost remains the same. Yet, by reducing the maximum edge length, Theorem \ref{theorem: c*_minus_ctilde_conv_0} can guarantee that the approximation error bound $2\|\r\|_{\Res}$ does not exceed the tolerance $\epsilon$. 
With this accuracy guarantee, 
we can use $\tilde{\c}$ as a feasible flow solution and evaluate it using the original nonlinear objective function: 
\begin{align}
\label{eq: Z_feas}
\tilde{Z} 
= \sum_{e\in \E} \Psi_e(\tilde{\c}_e; S_e).
\end{align}
Such approximation can be computed in $O(n\log n)$ time with respect to $n$, same as the convex-flow formulation. However, it is typically faster in practice because it avoids iterative Newton updates and computes $\tilde{\c}$ in one step.

\subsection{Extensions}

We now show how the analysis framework can be extended to three variants of the assignment problem. 

The first direct extension is to allow each supply or demand point to carry an arbitrary ``mass" value, such that the problem becomes the so-called transportation problem. 
For instance, a few depots may be regarded as a set of supply point each with a large mass, while multiple customers 
as demand points each with a smaller mass. Under this extension, the model formulation remains essentially unchanged; the only difference is that the magnitudes of the jumps in the imbalance profiles $\{S_e\}$ are now based on the points' masses (rather than unit value).

The second extension is to allow for directed edges in the network. This is natural in transportation networks where one-way traffic is often enforced. 
Recall that in the undirected setting, the orientation of each edge is arbitrarily determined in our model, and the edge flow $\{f_e\}$ is allowed to be positive or negative. For a directed edge $e$, we can simply force the edge orientation to follow 
the prescribed direction, and require
$\{f_e(x)\}$ to be non-negative at every $x$ on the edge. Note that for such an edge $e$,
\begin{align*}
f_e(x) \geq 0 \quad \Rightarrow \quad S_e(x) - c_e \geq 0, \quad  \forall x\in [0, l_e] \quad
\Rightarrow c_e \leq \min_{x\in [0, l_e]} S_e(x).
\end{align*}
As such, directed edges can be incorporated into the convex-flow formulation framework by additionally imposing the above edgewise inequality constraint. 
The objective function and the rest of the formulation remain unchanged. 

The third extension is to allow the two sets of points to have different cardinalities. Let $|U| = n$ and $|V| = m$, and we assume $n \geq m$ without loss of generality. In such a case, only $m$ point pairs can be matched, 
and accordingly the supply-side equality constraints in \eqref{eq: IP} are replaced by inequalities $\sum_{v\in V} y_{uv} \leq 1, \forall u\in U$.
This problem can be easily converted into a balanced matching problem by 
introducing a single dummy demand point with a mass of $(n - m)$, and connecting it to every supply point in $U$ with a zero-distance dummy edge. This augmented network can then be used directly as if the matching problem is now balanced. 

The above transformation notably increases the size of the network because 
$n$ dummy edges must be added, and in turn, 
each supply point must now be treated as a new network node which further increases the number of edges. 
Therefore, as a compromise, the dummy demand may instead be connected to a subset of selected nodes of the underlying network, rather than to all individual supply points, with zero-distance dummy edges. A supply point, if unmatched to a demand point, can be connected to the dummy demand node via detour through the nearest of such selected nodes. 
Under this approximation, the number of dummy connections is reduced to $|\V|$, and the flow-conservation equality in \eqref{eq: CF} is simply replaced by $\I\c \leq \I^+\s$. The approximation error from the detour can be reduced arbitrarily small as each network 
edge is divided into short segments.

\section{Random Bipartite Matching} 
\label{sec: RBMP}
In this section, we consider a random setting in which the points in $U, V$ are i.i.d samples from two probability spaces $(\L, \F, \mu)$ and $(\L, \F, \nu)$,  respectively. Here $\F$ is the $\sigma$-algebra on the network sample-space $\L$, and the measures $\mu$ and $\nu$ specify the sampling distributions of one random point on $\L$; i.e., for any event $A\in \F$ (e.g., a specific portion of 
an edge, or any part of a network), $\mu(A)$ (or $\nu(A)$) is the probability that a random supply (or demand) point lies in $A$. 
Now, $Z^\mathrm{IP}, Z^\mathrm{CF}, Z$ are random variables induced by the random point sets $U$ and $V$, and we are interested in analyzing their expectations. 

\subsection{Asymptotic Convergence}
\label{sec: asy_conv}
Importantly, the realized random problem converge to a limiting problem, which substantially simplifies the analysis. Recall that $n$ is the cardinality of the set $U$ or $V$. For any edge $e$, each random point falls onto this edge $\L_e(x)$ according to a Bernoulli trial, and hence $S_e(x)$ can be interpreted as the difference between the sums of two sets of Bernoulli variables, each of cardinality $n$. The imbalance profile $\{S_e(x): \forall x\in [0, l_e]\}$ has an asymptotic limit when $n \rightarrow \infty$. 
First, when $\mu(\L_e(x)) \not \equiv \nu(\L_e(x))$, by the law of large numbers, as $n\to\infty$, the sample mean of independent random variables converges to the population mean; or equivalently,
\begin{equation}
\begin{aligned}
\frac{S_{e}(x)}{n}
&= 
\frac{1}{n}\left[\sum_{u\in U} \mathbbm{1}(u\in \L_e(x))\right] 
- 
\frac{1}{n}\left[\sum_{v\in V} \mathbbm{1}(v\in \L_e(x))\right]
\xrightarrow{n \to\infty} \mu(\L_e(x)) - \nu(\L_e(x)),
\end{aligned}
\end{equation}
where $\mathbbm{1}(\cdot)$ denotes the indicator function. 
When $\mu(\L_e(x)) \equiv \nu(\L_e(x)) , \forall x$, $S_e(x)$ is a balanced random walk. By Donsker's invariance principle and independence between supply and demand points, if $B$ is a standard Brownian bridge, then $S_e(x)/\sqrt{n}$ converges in distribution as follows: 
\begin{equation}
\label{eq: S_en_to_H_e}
\begin{aligned}
\frac{S_{e}(x)}{\sqrt{n}}
&= \frac{1}{\sqrt{n}}\left[\sum_{u\in U} \mathbbm{1}(u\in \L_e(x))\right] - \frac{1}{\sqrt{n}} \left[\sum_{v\in V} \mathbbm{1}(v\in \L_e(x))\right]\\
& = \frac{1}{\sqrt{n}}\left[\sum_{u\in U} \mathbbm{1}(u\in \L_e(x)) - \mu(\L_e(x))\right] 
- \frac{1}{\sqrt{n}}\left[\sum_{v\in V} \mathbbm{1}(v\in \L_e(x)) - \nu( \L_e(x))\right] \\
&\xrightarrow{n \to\infty} 
\sqrt{2} B(\mu(\L_e(x))).
\end{aligned}    
\end{equation}
As such, the following normalized edge imbalance profile have convergence:
\begin{align*}
\zeta_{e}(x) =
\lim_{n\to\infty}
\begin{cases}
\frac{S_e(x)}{n}  \\
\frac{S_e(x)}{\sqrt{n}}  
\end{cases}
=
\begin{cases}
\mu(\L_e(x)) - \nu(\L_e(x)), & \text{ if } \mu(\L_e(x)) \not \equiv \nu(\L_e(x)), \\
\sqrt{2} B(\mu(\L_e(x))), & \text{ if } \mu(\L_e(x)) \equiv \nu(\L_e(x)). 
\end{cases}
\end{align*}

Meanwhile, recall that (i) the edgewise matching cost $\Psi_e(c_e; S_e)$ is linear with respect to either argument and (ii) Lipschitz continuous with regard to $c_e$, and (iii) the network flow conservation constraints are also linear. Therefore, if we use the limiting imbalance profile $\zeta_{e}(x)$ to replace $S_e(x)$ in \ref{eq: CF} and $\s^\infty := (\zeta_e(l_e))_{e\in \E}$ to replace $\s$ in \ref{conserv}, then the solution to \ref{eq: CF}, denoted $\c^\infty$ with objective value $Z^\infty$, 
will be simply the corresponding limits of normalized $\textbf{c}^\mathrm{int}$ and $Z^\text{CF}$, respectively. In other words, 

\begin{align*}
Z^\infty = 
\lim_{n\to\infty}
\begin{cases}
\frac{Z^\mathrm{CF}}{\sqrt{n}}, & \text{ if } \mu = \nu, \\
\frac{Z^\mathrm{CF}}{n}, & \text{ if } \mu \neq \nu.    
\end{cases}
\end{align*}

\noindent This also implies how $Z^\mathrm{CF}$ asymptotically scales with $n$; i.e., since $Z^\infty$ is clearly positive and finite, we have
\begin{align}
\label{eq: ZCF_scaling}
\Exp[Z^\mathrm{CF}] \asymp
\begin{cases}
\sqrt{n}, & \text{ if } \mu = \nu, \\
n, & \text{ if } \mu \neq \nu.     
\end{cases}
\end{align}

The convergence property also holds for $Q_e$ 
and the associated resistance matrices. First, the following theorem implies that we can use $nQ_e(\cdot; \zeta_e)$ as an asymptotic approximation of $Q_e(\cdot;S_e)$. 

\begin{theorem}
\label{theorem: Q_conv_Qinf}
Assume  every level set of $\zeta_e$ has a zero Lebesgue measure; that is, for every $c\in \R, e\in \E$, 
\begin{align*}
\lambda \{x: \zeta_e(x) = c\} = 0,
\end{align*}
then asymptotic convergence exists as follows:
\begin{align*}
Q_e(x; \zeta_e) = \begin{cases}
\lim_{n\to\infty} \frac{Q_e(x; S_e)}{n}, & \text{ if } \mu(\L_e(x)) \not \equiv \nu(\L_e(x)), \\
\lim_{n\to\infty} \frac{Q_e(x; S_e)}{\sqrt{n}}, & \text{ if } \mu(\L_e(x)) \equiv \nu(\L_e(x)).  
\end{cases}
\end{align*}
\end{theorem}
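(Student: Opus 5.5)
Let $a_n$ denote the normalization: $a_n=n$ when $\mu(\L_e(\cdot))\not\equiv\nu(\L_e(\cdot))$, and $a_n=\sqrt{n}$ when the two coincide. The proof reduces the statement to the convergence of the distribution functions $F_e$ at continuity points, and then invokes the standard fact that generalized inverses converge wherever the limit is continuous. The first step is a scaling identity. By definition, $F_e(c; S_e/a_n)=\lambda\{x: S_e(x)\le a_n c\}=F_e(a_n c; S_e)$. Hence $Q_e(t; S_e/a_n)=\inf\{c: F_e(a_nc;S_e)\ge t\}=Q_e(t;S_e)/a_n$, and it suffices to prove $Q_e(t; S_e/a_n)\to Q_e(t;\zeta_e)$, with $t$ playing the role of $x$ in the statement.

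The second step proves $F_e(c; S_e/a_n)\to F_e(c;\zeta_e)$ for every $c\in\R$, in the appropriate mode: almost surely in the LLN case, and in distribution (jointly in $c$) in the Brownian case, using Skorokhod representation to work on a space where $S_e/\sqrt n\to\sqrt2 B(\mu(\L_e(\cdot)))$ uniformly almost surely.
\begin{itemize}
\item In the LLN case, Glivenko--Cantelli gives $\sup_x|S_e(x)/n-\zeta_e(x)|\to 0$ almost surely.
\item In the Brownian case, Donsker's invariance principle as in \eqref{eq: S_en_to_H_e} gives uniform convergence after Skorokhod embedding.
\end{itemize}
Given uniform convergence $\|S_e/a_n-\zeta_e\|_\infty\le\eta_n\to0$, we have the sandwich $F_e(c-\eta_n;\zeta_e)\le F_e(c;S_e/a_n)\le F_e(c+\eta_n;\zeta_e)$. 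The level-set hypothesis $\lambda\{\zeta_e=c\}=0$ says exactly that $F_e(\cdot;\zeta_e)$ has no jumps, i.e.\ it is continuous everywhere, since the jump at $c$ equals $\lambda\{\zeta_e=c\}$. Both bounds therefore converge to $F_e(c;\zeta_e)$ for every $c$. An alternative route is Lemma \ref{lemma: Psi_e_Lipchitz}: it gives $\Psi_e(c;S_e/a_n)\to\Psi_e(c;\zeta_e)$ locally uniformly, and one could then use convexity and Lemma \ref{lemma: Psi_e_subdifferential} to pass to derivatives. The direct sandwich is cleaner.

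The third step, which I expect to be the main obstacle, transfers pointwise convergence of $F$ to convergence of the quantile $Q$. This is the classical inverse-convergence lemma: if nondecreasing $F_n\to F$ at continuity points of $F$, then $F_n^{-1}(t)\to F^{-1}(t)$ at every $t$ where $F^{-1}$ is continuous, i.e.\ where $F$ has no flat stretch at level $t$. I will prove it directly.

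For $\varepsilon>0$, set $q=Q_e(t;\zeta_e)$.
\begin{itemize}
\item \emph{Lower bound.} $F(q-\varepsilon)<t$ by definition of the infimum, so $F_n(q-\varepsilon)<t$ eventually, giving $Q_n(t)\ge q-\varepsilon$.
\item \emph{Upper bound.} We need $F(q+\varepsilon)>t$; this fails only if $F$ is flat on $[q,q+\varepsilon]$ at height $t$. If it holds, then $F_n(q+\varepsilon)\ge t$ eventually and $Q_n(t)\le q+\varepsilon$.
\end{itemize}

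The flat stretches of $F_e(\cdot;\zeta_e)$ correspond to gaps in the range of $\zeta_e$. A continuous $\zeta_e$ (whether the continuous path $\zeta_e$ in the Brownian case, or the limit $\mu-\nu$ of cumulative measures, assumed continuous) on the interval $[0,l_e]$ has connected range, so there are no interior gaps. The flat levels occur only at $t\in\{0,l_e\}$. Thus convergence holds for all $t\in(0,l_e)$, i.e.\ almost every $t$, which is the sense in which I will state the limit; it also suffices for the $L^1$ usage later via dominated convergence.

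In the random case, the limit is understood in distribution, obtained on the Skorokhod coupling above. Note also that $F_e(c;\zeta_e)$ is taken with $\zeta_e$ in place of $\xi_e$, and this depends on $n$ only through $S_e$. The final identity then follows by multiplying back by $a_n$.
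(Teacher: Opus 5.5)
Your proposal is correct and follows the same route as the paper. Both arguments use the scaling identity $Q_e(t;S_e)/a_n=\inf\{c: F_e(a_n c;S_e)\ge t\}$, then the convergence $F_e(a_n c;S_e)\to F_e(c;\zeta_e)$ via the continuity of $F_e(\cdot;\zeta_e)$ that the level-set hypothesis supplies, and finally pass to the generalized inverse.

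Your version is more careful than the paper's at two points the paper leaves implicit:
\begin{itemize}
\item The inverse step actually needs $F_e(\cdot;\zeta_e)$ to have no flat stretch at level $t$. This is a condition separate from the level-set hypothesis, and you supply it through the connected range of a continuous $\zeta_e$.
\item The limit is almost sure in the law-of-large-numbers regime but only in distribution in the Brownian regime, which you handle with a Skorokhod coupling.
\end{itemize}
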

\begin{proof}
Here we only present the proof when $\mu(\L_e(x)) \not \equiv \nu(\L_e(x))$, since the proof for the other case is similar.  
For any $c\in \R$, 
\begin{align*}
F_e(n c; S_e)
=\int_{x: S_e(x)\le nc}\d x
=\int_{x: S_e(x)/n\le c}\d x.
\end{align*}
When every level set of $\zeta_e$ has a zero Lebesgue measure, the functional $F_e(\cdot; \zeta_e)$ 
is continuous almost surely, i.e., 
\begin{align*}
F_e^-(c; \zeta_e) 
=\int_{x: \zeta_e(x) < c}\d x
=\int_{x: \zeta_e(x) \leq c}\d x
= F_e(c; \zeta_e).
\end{align*}
By the continuous mapping theorem \cite{durrett2019probability}, 
$\lim_{n\to\infty} F_e(nc; S_e) = F_e(c; \zeta_e).
$ 
The normalized inverse function thus has the following convergence property:
\begin{align*}
\frac{1}{n}Q_e(x; S_e) 
&= \frac{1}{n} \inf \{n c: F_e(nc; S_e) \geq x\} 
= \inf \{c: F_e(nc; S_e) \geq x\} \\
&\to \inf \{c: {F}_e(c;\zeta_e) \geq x\} 
= {Q}_e(x; \zeta_e). 
\end{align*}
This completes the proof. 
\end{proof}

Furthermore, if $Q_e(\cdot; \zeta_e)$ is differentiable at $l_e/2$, the resistance matrix $\Res^*$ is well-defined. 
The following lemma states the condition under which this property holds. 
Intuitively, if $\zeta_e$ does not have ``flat" regions around $Q_e(l_e/2; \zeta_e)$, then $F_e(\cdot; \zeta_e)$ will not have jumps at $l_e/2$, and thus $Q_e(\cdot; \zeta_e)$ will be differentiable at $l_e/2$. 
\begin{lemma}
\label{lemma: Q_inf_differentiable}
Suppose the following conditions hold for a twice-differentiable edge profile $\xi_e \in \mathcal{C}^2$,  
\begin{align*}
\xi_e'(x) \neq 0, \quad\forall x: \xi_e(x) = Q_e(l_e/2; \xi_e) = 0,        
\end{align*}
then $Q_e(\cdot; \xi_e)$ is differentiable at $l_e/2$. 
\end{lemma}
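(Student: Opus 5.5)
Let $c_0 := Q_e(l_e/2;\xi_e)$, which by hypothesis equals $0$. The plan is to show that $F_e(\cdot;\xi_e)$ is continuously differentiable with a strictly positive derivative in a neighbourhood of $c_0$. The inverse function theorem then gives differentiability of its generalized inverse $Q_e(\cdot;\xi_e)$ at $l_e/2$, with
\begin{align*}
Q_e'(l_e/2;\xi_e) = \frac{1}{F_e'(c_0;\xi_e)} .
\end{align*}

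The first step is to analyze the level set $Z := \{x\in[0,l_e]: \xi_e(x) = c_0\}$. It is closed because $\xi_e$ is continuous. At each point of $Z$ we have $\xi_e'(x)\neq 0$, so every such point is isolated in $Z$. A closed set of isolated points in the compact interval $[0,l_e]$ is finite, so $Z=\{x_1,\dots,x_m\}$. Moreover $m\ge 1$: otherwise $\xi_e-c_0$ would have constant sign and $F_e$ would be locally constant at $c_0$ with value $0$ or $l_e$, which contradicts $Q_e(l_e/2)=c_0$ with $0<l_e/2<l_e$.

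By continuity and compactness, $|\xi_e'|\ge \eta>0$ on small disjoint neighbourhoods $N_i$ of the $x_i$, and $|\xi_e - c_0|\ge \kappa>0$ on the complement of $\bigcup_i N_i$. Apply the inverse function theorem on each $N_i$ (handling one-sided neighbourhoods if some $x_i$ is an endpoint). This gives, for every $|c-c_0|<\kappa'$, exactly one solution $x_i(c)$ in each $N_i$, with $x_i(\cdot)$ a $\mathcal{C}^1$ function, and no solutions outside $\bigcup_i N_i$.

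The second step is to compute $F_e$ near $c_0$. On $N_i$ the set $\{\xi_e\le c\}$ is a one-sided interval ending at $x_i(c)$, so its length is affine in $x_i(c)$ with slope $\pm1$. The sign is chosen so that the length increases in $c$. Off $\bigcup_i N_i$ the contribution to $F_e$ does not depend on $c$. Hence
\begin{align*}
F_e(c;\xi_e) = \text{const} + \sum_{i} \frac{1}{|\xi_e'(x_i(c))|}\,(c - c_0) + o(c-c_0),
\end{align*}
and more precisely
\begin{align*}
F_e'(c) = \sum_i \frac{1}{|\xi_e'(x_i(c))|},
\end{align*}
which is continuous and positive near $c_0$. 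This is the coarea-type formula. In particular $F_e$ is continuous and strictly increasing there. Continuity gives $F_e(c_0)=l_e/2$, which I justify from the definition of $Q_e$ together with the fact that $Z$ has measure zero.

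The final step inverts this. Since $F_e$ is a strictly increasing $\mathcal{C}^1$ bijection from a neighbourhood of $c_0$ onto a neighbourhood of $l_e/2$, the generalized inverse $Q_e$ coincides there with the true inverse. Therefore $Q_e$ is differentiable at $l_e/2$ with derivative $\bigl(\sum_i 1/|\xi_e'(x_i)|\bigr)^{-1}$.

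The main obstacle is rigor in the first step: proving finiteness of $Z$ and the uniform separation, especially at the endpoints $0$ and $l_e$, where neighbourhoods are one-sided. The approach I would try first is a compactness argument. If there were infinitely many crossings, they would accumulate at some point $\bar x$; by continuity $\xi_e(\bar x)=c_0$, so $\xi_e'(\bar x)\neq0$ by hypothesis. But the mean value theorem between consecutive zeros would force $\xi_e'(\bar x)=0$, a contradiction.
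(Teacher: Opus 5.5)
Your argument breaks at the endpoint case, which you set aside as routine. Suppose some $x_i\in\{0,l_e\}$, say $x_i=0$. Then $\xi_e$ is strictly monotone on the one-sided neighbourhood $N_i=[0,\delta)$ and maps it onto $[c_0,c_0+\rho)$ or $(c_0-\rho,c_0]$ for some $\rho>0$. So for $c$ on the other side of $c_0$ there is no solution in $N_i$ at all, contrary to your claim that every $|c-c_0|<\kappa'$ has exactly one. The term $1/|\xi_e'(x_i)|$ therefore enters only one of the one-sided derivatives of $F_e$ at $c_0$. The number of summands in $F_e'(c)=\sum_i 1/|\xi_e'(x_i(c))|$ jumps at $c_0$, and differentiability of $F_e$ there does not follow.

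This cannot be repaired, because the statement itself fails in this case. Take $\xi_e(x)=x(1-x)$ on $[0,2]$. Then $\{\xi_e=0\}=\{0,1\}$ with $\xi_e'(0)=1$ and $\xi_e'(1)=-1$, and $Q_e(1;\xi_e)=0$, while
\[
F_e(c;\xi_e)=2-\sqrt{1-4c}\quad(0\le c<\tfrac14),\qquad F_e(c;\xi_e)=\tfrac12\bigl(3-\sqrt{1-4c}\bigr)\quad(-2\le c\le 0).
\]
Hence $F_e'(0^+)=2\neq 1=F_e'(0^-)$, and $Q_e(\cdot;\xi_e)$ has one-sided derivatives $\tfrac12$ and $1$ at $l_e/2=1$. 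By contrast, finiteness of the level set, which you single out as the main obstacle, is harmless at the endpoints: a nonzero one-sided derivative already makes the point isolated.

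The lemma needs an extra hypothesis. One option is $\{x:\xi_e(x)=c_0\}\subset(0,l_e)$; more generally, the endpoint contributions to the left and right derivatives of $F_e$ must coincide. This matters for the paper's use of the lemma. The limiting profiles satisfy $\zeta_e(0)=0$, so at the level $0$ written in the hypothesis the endpoint $x=0$ is always in the level set.

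Under the interior hypothesis your proof is correct, and it takes a different, more elementary route than the paper's. The paper obtains $F_e(b)-F_e(a)=\int_a^b\sum_{\xi_e(x)=t}1/|\xi_e'(x)|\,\d t$ from the coarea formula and then differentiates by the fundamental theorem of calculus. That step tacitly requires the integrand to be continuous at $c_0$. Your inverse-function-theorem parametrization of the crossings supplies exactly this continuity. It is also exactly what fails in the example above, where the paper's formula returns only the right derivative $2$. You additionally verify $F_e(c_0)=l_e/2$, and that the generalized inverse coincides with the true inverse near $l_e/2$; the paper leaves both implicit.
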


\begin{proof}
We will use the coarea formula \citep{coarea} to prove that $F_e(\cdot; \zeta_e)$ is differentiable at $l_e/2$, which states that for an $D$-dimensional open set $\Omega \in \R^D$, a Lipschitz continuous function $f$ and a $L^1$ function $g$, 
\begin{align*}
\int_{\Omega} g(x) |\nabla f(x)| \d x = 
\int_{\R} \left(\int_{f^{-1}(t)} g(x) d\mathcal{H}_{D-1} \right) \d t,
\end{align*}
where $\mathcal{H}_{D-1}$ is the $(D-1)$-dimensional Hausdorff measure. 
When $D = 1$, the above expression reduces 
to 
\begin{align*}
\int_{\Omega} g(x) |f'(x)| \d x = 
\int_{\R} \left(\sum_{x\in f^{-1}(t)} g(x) \right) \d t.
\end{align*}
Let $\xi_e\in \C^2$, $\Omega:= [0, l_e]$, $g(x) := \mathbbm{1}\{a < \xi_e(x) \leq b\} / |\xi_e'(x)|$ for real values $a < b$, and $f(x) := \xi_e(x)$. Then 
\begin{align*}
F_e(b; \xi_e) - F_e(a; \xi_e) &= 
\int_0^{l_e} \mathbbm{1}\{a < \xi_e(x) \leq b\} \d x = 
\int_{\R} \left(\sum_{x: \xi_e(x) = t} \frac{\mathbbm{1}\{a < \xi_e(x) \leq b\}}{|\xi_e'(x)|} \right) \d t \\
&= \int_{a}^b\left(\sum_{x: \xi_e(x) = t} \frac{1}{|\xi_e'(x)|}\right)\d t. 
\end{align*}
Therefore, as $b\rightarrow a$, by the fundamental theorem of calculus:
\begin{align*}
F_e'(Q_e(l_e/2; \xi_e); \xi_e) = \sum_{\{x:\xi_e(x) = Q_e(l_e/2; \xi_e)\}} \frac{1}{|\xi_e'(x)|}> 0.
\end{align*}
Hence, $F_e(\cdot; \xi_e)$ is differentiable at $Q_e(l_e/2; \xi_e)$, and its inverse function $Q_e(\cdot; \xi_e)$ is also differentiable at $l_e/2$ almost surely. This completes the proof. 
\end{proof}

When Lemma \ref{lemma: Q_inf_differentiable} holds, we can define the limiting resistance matrix $\Res^\infty$ as
\begin{align*}
\Res^\infty = \mathrm{diag}\left(\left(\frac{1}{Q_e'(l_e/2; \zeta_e)}\right)_{e\in \E}\right),
\end{align*}
and the normalization immediately gives
\begin{align*}
\Res^\infty_e = 
\begin{cases}
\lim_{n\to\infty}n\Res_e^*, & \text{ if } \mu(\L_e(x)) \not \equiv \nu(\L_e(x)), \\
\lim_{n\to\infty}
\sqrt{n}\Res_e^*, & \text{ if } \mu(\L_e(x)) \equiv \nu(\L_e(x)). 
\end{cases}
\end{align*}
Therefore, the resistance-based formulation in Section \ref{sec: electrict_network} applies directly to asymptotic point distribution profiles. 
The above analysis reveals the fact that resistance scaling differs sharply across the two distributional regimes. When $\mu(\L_e(x)) \not \equiv \nu(\L_e(x))$, the resistance decays at the faster rate of $O(1/n)$. This is not surprising: if supply and demand are distributed differently on the edge, an imbalance appears across edges, and more matched pairs must be routed through network junctions. The effective resistance therefore becomes smaller to allow larger edge flow that accommodates more inter-edge matches. In contrast, when $\mu(\L_e(x)) \equiv \nu(\L_e(x))$ and $n\rightarrow \infty$, most of the matching pairs occur locally, and the edge flow is generated only by negligible stochastic fluctuations. Consequently, the resistance decays more slowly, at rate $O(1/\sqrt{n})$, reflecting a weaker likelihood of forming inter-edge matches -- this property will be discussed further in the next subsection.

\subsection{Stochastic Flow under Identical Distributions}
In this section, we specifically focus on the case $\mu=\nu$ and study the properties of 
$\c^\mathrm{int}$. 
We will show that $\c^\mathrm{int}_e$ is centered, symmetric, and sub-Gaussian for every edge $e$.

\begin{theorem}
\label{theorem: exp_ce*_sq_O(n)}
when $\mu = \nu$, for all $e\in \E$, random variable $\c^\mathrm{int}_e$ has a centered, symmetric, and sub-Gaussian distribution with an $O(n)$ variance proxy.
\end{theorem}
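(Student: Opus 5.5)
The plan has three parts: symmetry from exchanging supply and demand, a deterministic pointwise bound on the optimal boundary flow, and sub-Gaussian concentration of the imbalance profiles.

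\emph{Symmetry and centering.} When $\mu=\nu$, the pair $(U,V)$ has the same law as $(V,U)$. Exchanging the roles of supply and demand flips $S_e\mapsto -S_e$ for every $e$, hence $\s\mapsto-\s$. Since $\Psi_e(c;S_e)=\Psi_e(-c;-S_e)$ and the constraint \eqref{conserv} is linear, $\c$ is feasible/optimal for the data $(S_e)$ if and only if $-\c$ is feasible/optimal for $(-S_e)$. To make the argument rigorous I would fix a tie-breaking rule that is odd under this map; for instance, choose $\c^\mathrm{int}$ as the integer point nearest to the unique minimum-$\ell_2$-norm optimizer of \eqref{eq: RLX}, via the rounding argument of Theorem \ref{theorem: RLX}, with ties broken symmetrically. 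Then $\c^\mathrm{int}(-S)=-\c^\mathrm{int}(S)$, so $\c^\mathrm{int}_e\overset{d}{=}-\c^\mathrm{int}_e$. This gives symmetry, and centering follows once integrability is established in the next steps.

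\emph{Reduction to the imbalance profiles.} Next I would bound $|c^\mathrm{int}_e|$ deterministically by $\sup$-norms of the profiles. The key claim is that at an optimum, no edge boundary flow lies outside the range of its own profile by more than what the network forces. Concretely, if $c_e>\max_x S_e(x)$, then $f_e(x)=S_e(x)-c_e<0$ on all of $e$. By flow decomposition of the optimal flow (Lemma \ref{lemma: fe*_gives_feasible_solution}), every unit crossing $e$ must originate from, and terminate at, actual points elsewhere. The total flow crossing any edge is therefore at most the total number of supply points that are matched away from their own edge, which is bounded by $\sum_{e'}|S_{e'}(l_{e'})|$ plus local excursions. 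This should give
\begin{align*}
|c^\mathrm{int}_e|\le \sum_{e'\in\E}\sup_{x\in[0,l_{e'}]}|S_{e'}(x)| =: M_n .
\end{align*}
The cleanest route to this bound is a cycle-cancellation argument. The optimal flow can be taken acyclic, since removing a circulation does not increase $\sum_e\int|f_e|$. An acyclic flow decomposes into paths from supply to demand points, and for each $e$ there are at most $\sum_{e'}\sup|S_{e'}|$ path units in total. Making this step precise, i.e., that no optimal integer flow carries more than $M_n$ through any edge, is the main obstacle. If the crude path-count bound fails, my fallback is an optimality argument: reduce $|c_e|$ along a cycle or path and show via the subgradient formula of Lemma \ref{lemma: Psi_e_subdifferential} that the objective strictly decreases whenever $|c_e|>M_n$.

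\emph{Sub-Gaussian tail.} Each $S_{e'}(x)=\sum_{i=1}^n(\mathbbm{1}(u_i\in\L_{e'}(x))-\mathbbm{1}(v_i\in\L_{e'}(x)))$ is a sum of $n$ i.i.d.\ centered terms in $[-1,1]$. Since $S_{e'}(x)$ is monotone in $x$ between points, its supremum is a maximum over the $n$ bounded increments. I would control it either by the DKW/Massart inequality applied to the two empirical CDFs along the edge, giving $P(\sup_x|S_{e'}(x)|>t)\le 4e^{-t^2/(2n)}$, or by Doob's maximal inequality combined with Hoeffding along the random-walk ordering. A union bound over the finitely many $e'$ then yields $P(M_n>t)\le C|\E|e^{-t^2/(2|\E|^2 n)}$. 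Hence $|c^\mathrm{int}_e|$ has sub-Gaussian tails with variance proxy $O(n)$, where the constant depends only on $|\E|$. In particular $\Exp|c^\mathrm{int}_e|<\infty$, so symmetry gives $\Exp[c^\mathrm{int}_e]=0$. A symmetric variable with sub-Gaussian tails is sub-Gaussian in the moment-generating-function sense, with a proxy of the same order, which completes the plan.
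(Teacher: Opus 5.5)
\textbf{The gap: the deterministic bound $|\c^\mathrm{int}_e|\le M_n$ is false, and the path count offered for it does not bound it.} In an acyclic conformal decomposition of the optimal flow there are exactly $n$ path units, one per supply point, and most of them are local. So a count of ``path units in total'' cannot give $|\c^\mathrm{int}_e|\le M_n$. What has to be bounded is the number of paths that cross a vertex. Your phrase ``matched away from their own edge \dots plus local excursions'' does not supply that bound. Neither the path count nor your subgradient fallback can rescue the exact claim, because it fails.

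Here is a counterexample. Join two nodes $A,B$ by edges $e$ and $e_1$ of length $100$ and an empty edge $g$ of length $2$; parallel edges can be removed by inserting short empty edges, which have $S\equiv 0$.
\begin{itemize}
\item On $e$, oriented $A\to B$, put $k$ supply points at distance $1$ from $A$ and $2k$ demand points at distance $1$ from $B$.
\item On $e_1$, oriented $B\to A$, put $k$ demand points at distance $1$ from $B$ and $2k$ supply points at distance $1$ from $A$.
\end{itemize}
Then $\sup_x|S_e(x)|=\sup_x|S_{e_1}(x)|=k$ and $S_g\equiv 0$, so $M_n=2k$. Every supply--demand pair is at distance $4$ via $g$, and every other route costs at least $98$. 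The optimum is unique and sends all $3k$ units through $g$, so $|\c^\mathrm{int}_g|=3k=\tfrac32 M_n$. The mechanism is that a single edge can have up to $3\sup_x|S_{e'}(x)|$ of its points joined to its endpoints. Conformality does not stop all those paths from funnelling through one edge of a cycle.

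\textbf{Repair.} You only need $|\c^\mathrm{int}_e|\le C_e M_n$ with $C_e$ independent of $n$. With that, the rest of your plan goes through unchanged: the supply/demand exchange symmetry, DKW--Massart with a union bound, and passing from tails to the moment generating function.

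The quickest fix uses the paper's coercivity inequality
\[
l_e|\c^\mathrm{int}_e|\le Z^\mathrm{CF}+\Psi_e(0;S_e).
\]
Then bound $Z^\mathrm{CF}$ pathwise rather than, as the paper does, by McDiarmid plus $\Exp[Z^\mathrm{CF}]=O(\sqrt n)$. Routing $\I^+\s$ along a spanning tree gives an integral feasible $\c'$ with $|c'_{e'}|\le\frac12\sum_{e''}|s_{e''}|\le\frac12 M_n$. Hence $Z^\mathrm{CF}\le\sum_{e'}l_{e'}\bigl(\sup_x|S_{e'}(x)|+|c'_{e'}|\bigr)\le\frac32 L M_n$, and so $|\c^\mathrm{int}_e|\le\bigl(1+\frac{3L}{2l_e}\bigr)M_n$.

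Alternatively, your path idea can be made correct.
\begin{itemize}
\item In the LIFO layer decomposition of Lemma~\ref{lemma: fe*_gives_feasible_solution}, at most $3\sup_x|S_{e'}(x)|$ points of $e'$ are joined to its endpoints.
\item An optimal flow contains no conformal cycle when all $l_{e'}>0$.
\item Hence each vertex-crossing path uses two such endpoints and crosses $(e,0)$ at most once.
\end{itemize}
This gives $|\c^\mathrm{int}_e|\le\frac32M_n$, which the example shows is sharp.

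Either repaired version is fully pathwise. It therefore avoids the paper's reliance on the heuristic scaling \eqref{eq: ZCF_scaling} for $\Exp[Z^\mathrm{CF}]$, and it yields that scaling as a byproduct.

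One minor point on the symmetry step: choose among the integer \emph{optimizers} of \eqref{eq: CF}, for instance the one nearest the minimum-norm relaxed optimizer, with ties broken at random. The nearest integer point by itself need not be feasible.
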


\begin{proof}
By symmetry of inputs in \eqref{eq: CF}, the optimal solution $\c_e^\mathrm{int}$ is clearly centered and symmetric.  
We only need to show that $\c^\mathrm{int}_e$ is a sub-Gaussian random variable. In so doing, we first show that $|\c^\mathrm{int}_e|$ is bounded from above. For any $e\in \E$,
\begin{align*}
\Psi_e(\c^\mathrm{int}_e; S_e)
&=\int_0^{l_e}|S_e(x)-\c^\mathrm{int}_e|\d x 
\geq 
\int_0^{l_e} \left[|\c^\mathrm{int}_e|-|S_e(x)|\right]\d x 
= l_e|\c^\mathrm{int}_e| - \Psi_e(0; S_e). 
\end{align*}
Thus, for each $e\in \E$, 
\begin{align*}
l_e|\c^\mathrm{int}_e| \leq \Psi_e(\c^\mathrm{int}_e; S_e) + \Psi_e(0; S_e)  
\leq \left[\sum_{e\in \E} \Psi_e(\c^\mathrm{int}_e; S_e)\right] + \Psi_e(0; S_e)  
= Z^\mathrm{CF} + \Psi_e(0; S_e),
\end{align*} 
where the second inequality follows nonnegativity of $\Psi_e$. 
Hence, $\c_e^\mathrm{int}$ is bounded from above by $\frac{1}{l_e}[Z^\mathrm{CF} + \Psi_e(0; S_e)]$.

We now study the scaling and concentration properties of $Z^\mathrm{CF}$ and $\Psi_e(0, S_e)$. From Equation \eqref{eq: ZCF_scaling}, we know $\Exp[Z^\mathrm{CF}]$ is $O(\sqrt{n})$. 
From Mcdiarmid's inequality \citep{McDiarmid1989}, it is easy to see $Z^\mathrm{CF} - \Exp[Z^\mathrm{CF}]$ is sub-Gaussian with an $O(n)$ variance proxy. Similarly, 
we can show the properties of $\Psi_e(0; S_e)$. Since
$\Psi_e(0;S_e) = \int_{0}^{l_e} |S_e(x)|$, we first analyze the properties of $S_e(x)$, which is a sum of $2n$ independent random variables taking value from $\{-1, 0, 1\}$. Each of these variables 
has a variance no larger than $1/4$, and hence: 
\begin{align*}
\mathrm{Var}(S_e(x)) = \mathrm{Var}\left\{\left[\sum_{u\in U} \mathbbm{1}(u \in \L_e(x))\right] -\left[ \sum_{v\in V} \mathbbm{1}(v\in \L_e(x)) \right]\right\}
= 2n \mathrm{Var}(\mathbbm{1}(u\in \L_e(x))) \leq \frac{n}{2}.
\end{align*}
Therefore by Cauchy-Schwarz inequality \citep{durrett2019probability}, $\Exp[|S_e(x)|] \leq \sqrt{n/2} = O(\sqrt{n})$, which implies that $\Exp[\Psi_e(0;S_e)] = O(\sqrt{n})$. 
Finally, adding or removing one supply or demand point changes $S_e(x)$ by at most 1 anywhere over $[0, l_e]$, and therefore,  by McDiarmid's inequality \citep{McDiarmid1989}, $\Psi_e(0) - \Exp[\Psi_e(0)]$ is sub-Gaussian with an $O(n)$ variance proxy. 

Therefore, if we denote $Y = Z^\mathrm{CF} + \Psi_e(0; S_e)$, we can conclude that $\Exp[Y] = O(\sqrt{n})$ and $Y - \Exp[Y]$ is sub-Gaussian with an $O(n)$ variance proxy; i.e., 
there exists a positive constant $C_1$ such that $\Exp[Y] \leq C_1\sqrt{n}$, and for all $t \geq 0$, there exists a positive constant $C_2$ such that: 
\begin{align}
\label{eq: P_c_e*_geq_t_leq_P_Y_geq_let}
\P(|\c_e^\mathrm{int}| \geq t) \leq \P(Y \geq l_e t)
= \P(Y - \Exp[Y] \geq l_e t - \Exp[Y])
\leq \exp\left(-\frac{(l_e t - \Exp[Y])^2}{C_2n}\right).
\end{align}

It remains to be shown that for every $t\geq 0$, there exists a constant $C_3 > 0$ such that 
\begin{align}
\label{eq: c_e*_subG_ineq}
\P(|\c_e^\mathrm{int}| \geq t) \leq 2\exp\left(-\frac{t^2}{C_3n}\right).
\end{align}
We consider two cases. 
For $t\geq \frac{2C_1\sqrt{n}}{l_e}$, we have 
$l_e t - \Exp[Y] \geq \frac{1}{2}l_e t$ so that 
\begin{align*}
\P(|\c_e^\mathrm{int}| \geq t) \leq 
\P(Y-\Exp[Y]\geq l_et-\Exp[Y]) \leq \exp\left(-\frac{l_e^2t^2}{4C_2n}\right) 
\quad \Leftrightarrow \quad 
C_3 \geq \frac{4C_2}{l_e^2}.
\end{align*} 
For $0 < t < \frac{2C_1\sqrt{n}}{l_e}$, we choose $C_3$ large enough to satisfy 
\begin{align*}
2\exp\left(-\frac{t^2}{C_3n}\right)\geq 1
\quad \Leftrightarrow \quad 
C_3 \geq \frac{4C_1^2}{l_e^2\log 2}.
\end{align*}
Since a probability can at most be 1, Equation \eqref{eq: c_e*_subG_ineq} holds trivially 
as long as we choose $C_3 = \max\{\frac{4C_2}{l_e^2}, \frac{4C_1^2}{l_e^2\log 2}\}$. 
This completes the proof. 
\end{proof}

A direct corollary of Theorem \ref{theorem: exp_ce*_sq_O(n)} --- which is formal proof for the scaling properties in Section \ref{sec: asy_conv} 
--- is that the fraction of inter-edge matches diminishes as $n\to\infty$. 
    
\begin{corollary}
\label{col: n_inter_to_0}
when $\mu = \nu$, the fraction of inter-edge matches converges to zero as $n\to\infty$. 
\end{corollary}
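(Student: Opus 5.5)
The plan is to bound the number of inter-edge matches by the total boundary flow in the optimal matching constructed from $\c^\mathrm{int}$. Then I will use Theorem \ref{theorem: exp_ce*_sq_O(n)} to show that this quantity is $O(\sqrt{n})$ in expectation, which is negligible compared with the $n$ matched pairs.

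First, fix the optimal integral boundary flow $\c^\mathrm{int}$ of \eqref{eq: CF}. By Theorem \ref{theorem: CF} and Lemma \ref{lemma: fe*_gives_feasible_solution}, the slicing/stack procedure turns it into an optimal matching. In that construction, a supply or demand point on edge $e$ is matched off-edge only if its unit-height layer ends or begins at a virtual point at $x=0$ or $x=l_e$. The number of virtual points on edge $e$ is $|f_e(0)|+|f_e(l_e)| = |\c^\mathrm{int}_e| + |S_e(l_e)-\c^\mathrm{int}_e|$. Hence the number $N_{\mathrm{inter}}$ of inter-edge matched pairs satisfies
\begin{align*}
N_{\mathrm{inter}} \leq \sum_{e\in\E}\left(2|\c^\mathrm{int}_e| + |S_e(l_e)|\right).
\end{align*}
Every inter-edge pair uses at least one virtual endpoint, so this count over-counts at worst by a constant factor. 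This combinatorial bound is deterministic; the rest is probabilistic.

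Second, bound the expectation of each term. By Theorem \ref{theorem: exp_ce*_sq_O(n)}, $\c^\mathrm{int}_e$ is centered sub-Gaussian with variance proxy $C_3 n$. Integrating the tail bound \eqref{eq: c_e*_subG_ineq} gives $\Exp|\c^\mathrm{int}_e| \leq \int_0^\infty 2e^{-t^2/(C_3 n)}\d t = O(\sqrt{n})$. For the edge totals, $\mathrm{Var}(S_e(l_e))\leq n/2$ as computed in the proof of Theorem \ref{theorem: exp_ce*_sq_O(n)}, so Cauchy--Schwarz gives $\Exp|S_e(l_e)| \leq \sqrt{n/2}$. Summing over the finitely many edges (the network is fixed as $n\to\infty$) yields $\Exp[N_{\mathrm{inter}}] = O(|\E|\sqrt{n})$. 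Therefore $\Exp[N_{\mathrm{inter}}/n] = O(n^{-1/2})\to 0$, i.e., convergence in $L^1$ and hence in probability.

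If almost-sure convergence is desired, I would use the sub-Gaussian tails again. The bound gives $\P(N_{\mathrm{inter}}\geq \delta n)\leq \sum_e \P(|\c^\mathrm{int}_e|\geq \delta n/(3|\E|)) + \sum_e\P(|S_e(l_e)|\geq \delta n/(3|\E|))$. Each term decays like $\exp(-c\,\delta^2 n)$, using Hoeffding for $S_e(l_e)$. These terms are summable in $n$, so Borel--Cantelli gives $N_{\mathrm{inter}}/n\to 0$ almost surely.

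The main obstacle is the first step. The difficulty is making rigorous that inter-edge matches are controlled by the boundary flows, given that the optimal matching is not unique and ties among shortest paths are broken arbitrarily. I would handle this by defining "the" matching as the one produced by Lemma \ref{lemma: fe*_gives_feasible_solution} from $\c^\mathrm{int}$, and stating the corollary for that optimal matching. More generally, any optimal matching's induced flow has $|f_e(0)|$ bounded by the number of paths crossing $x=0$. The reverse direction (paths crossing $\geq$ inter-edge matches counted at endpoints) is the real content, and it holds because each inter-edge pair's path must exit its origin edge through an endpoint. Cancellations between opposite crossings are exactly what the constructed matching avoids, so I would restrict to it.
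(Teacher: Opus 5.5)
Your proposal is correct and takes the same route as the paper. You bound the off-edge matches on each edge by the boundary flows $|f_e(0)|+|f_e(l_e)|$, use Theorem \ref{theorem: exp_ce*_sq_O(n)} to get $\Exp|\c^\mathrm{int}_e| = O(\sqrt{n})$, and sum over the fixed edge set to get an $O(n^{-1/2})$ expected fraction.

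Your treatment of $f_e(l_e)=S_e(l_e)-\c^\mathrm{int}_e$ via $\mathrm{Var}(S_e(l_e))\le n/2$, where the paper simply invokes symmetry, is sound, as is the Borel--Cantelli upgrade. You can also drop the restriction to the matching built in Lemma \ref{lemma: fe*_gives_feasible_solution}: equality in Lemma \ref{lemma: fe*_lower_bound} at optimality rules out opposing crossings for every optimal matching, so the bound holds for any of them.
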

\begin{proof}
Since $\Exp[|\c_e^\mathrm{int}|] \leq \sqrt{\Exp[|\c_e^\mathrm{int}|^2]} =  O(\sqrt{n})$, the corresponding flow $f_e$ at position $(e, 0)$ satisfies: 
\begin{align*}
\Exp[|f_e(0)|] = \Exp[|\c_e^\mathrm{int}|] = O(\sqrt{n}).  
\end{align*}
By symmetry, the flow at position $(e, l_e)$ has the same scaling $\Exp[|f_e(l_e)|] = O(\sqrt{n})$. Since these two flow values dictate the number of points on edge $e$ that are matched ``outside of" $e$, then, 
\begin{align*}
\frac{1}{n}\Exp[|f_e(0)| + |f_e(l_e)|] = O\left(\frac{1}{\sqrt{n}}\right) \xrightarrow{n\to\infty} 0.
\end{align*}
Summing the above across all edges, and the Corollary holds.
\end{proof}

As a final remark, the above properties provide theoretical support for the approximate network matching distance formulas in \cite{zhai2026average}, which assume that the point matches are 
primarily local (within the same edge), and 
only a negligible fraction of matches are between points from different edges. The observations in the numerical experiments in that reference also indirectly corroborate these properties. 


\subsection{A Fast Approximate Solution Algorithm 
under Nonidentical Distributions}
As established in Section \ref{sec: asy_conv}, when $\mu(\L_e(x)) \not \equiv \nu(\L_e(x))$, the normalized imbalance profile converges asymptotically to a limiting profile and leads to a limiting flow formulation,  implying that the effect of sampling randomness vanishes asymptotically. This observation motivates the use of the limiting-flow formulation as a computationally inexpensive approximation for finite realized instances. However, the condition $\mu(\L_e(x)) \not \equiv \nu(\L_e(x))$ need not hold on every edge. We therefore first show that the stochastic contribution by those unqualified edges are asymptotically dominated by the deterministic flow on the remaining edges, and suitable regulation terms can be used to approximate the solution. 

Recall that
the resistance of each edge $\Res_e^*$ exhibits different asymptotic scalings according to the local imbalance $\mu(\L_e(x))$ and $\nu(\L_e(x))$: $\Res_e^*$ is in order of $O(1/n)$ when $\mu(\L_e(x))\not\equiv \nu(\L_e(x))$, and in order of $O(1/\sqrt{n})$ otherwise. 
Thus, edges satisfying the condition $\mu(\L_e(x))\not\equiv \nu(\L_e(x))$ have a smaller resistance and can accommodate larger network-level flows.
Such behavior gives rise to two asymptotic regimes: when $\mu = \nu$ for the entire network, the optimal flow and matching cost are governed by stochastic fluctuations of order $O(\sqrt{n})$; when $\mu \neq \nu$ in the network, those edges with $\mu(\L_e(x)) \not\equiv \nu (\L_e(x))$ contribute most deterministic imbalance and asymptotically dominate the $O(\sqrt{n})$ stochastic fluctuation, resulting in network-level optimal flow and total matching cost to be on the order of $n$.
When $\mu \neq \nu$, some edges may be locally balanced, i.e., satisfying $\mu(\L_e(x))\equiv \nu (\L_e(x))$; for each of them, we can introduce a sufficiently small linear regulation term with constant $\delta > 0$ that impose non-identical point distributions: 
\begin{align*}
\mu(\L_e(x)) - \nu(\L_e(x)) = \delta x.
\end{align*}
This regularization yields a non-degenerate limiting flow problem, from which both $\c^\infty$ and $\Res^\infty$ are well-defined. 

The resulting limiting resistance can then be used to construct a fast approximate solution algorithm next. 
%
%
As established in Section \ref{sec: asy_conv}, the normalized optimal flow has the asymptotic convergence $\c^\mathrm{int}/n \to \c^\infty$. Equivalently $\c^\mathrm{int} = n\c^\infty + o(n)$, indicating that 
$n\c^\infty$ captures the dominant $O(n)$ component of the optimal flow. 
However, $n\c^{\infty}$ generally does not satisfy the constraint $\I (n\c^{\infty}) = \I^+\s$.
Therefore, we use $n\c^\infty$ as the baseline and approximate the remaining lower-order perturbation using a single feasibility projection. 
We define the one-shot normalized flow $\c^\mathrm{1s}$ as the $\Res^\infty$-weighted projection of $\c^{\infty}$ onto the realized feasible space:
\begin{equation}
\begin{aligned}
\c^\mathrm{1s} = \ \argmin{\c\in \R^{|\E|}} \quad &\frac{1}{2} (\c - \c^{\infty})^\tran \Res^\infty (\c - \c^{\infty}),
\text{    s.t.} \, & \I\c = \I^+ \frac{\s}{n}.
\end{aligned}    
\end{equation}
This projection chooses the feasible flow that requires the smallest resistance-weighted correction from the limiting optimizer. Its closed-form solution is 
\begin{align*}
\c^{\mathrm{1s}} = \c^{\infty} + (\Res^\infty)^{-1} \I^\tran [\I(\Res^\infty)^{-1} \I^\tran]^\dagger \left(\I^+\frac{\s}{n} - \I\c^{\infty}\right) = \c^{\infty} + (\Res^\infty)^{-1} \I^\tran [\I(\Res^\infty)^{-1} \I^\tran]^\dagger \I^+ \left(\frac{\s}{n} - \s^\infty\right).
\end{align*}
The second equality above holds because $\I\c^{\infty} = \I^+\s^\infty$. 
Thus, $n\c^{\mathrm{1s}}$ satisfies the original flow conservation constraint and provides a feasible estimate of the optimal objective value: 
\begin{align*}
Z^\mathrm{1s} = \sum_{e\in \E} \Psi_e(n\c^\mathrm{1s}; S_e). 
\end{align*}
It is observed that all quantities in the projection formula are determined by the underlying distributional profiles, except for the realized imbalance vector $\s$. The discrepancy between $\s/n$ and $\s^\infty$ induces a correction to the limiting flow $\c^\infty$. This leads to a one-shot asymptotic approximation algorithm: the distribution-dependent quantities can be precomputed offline, and for each realized instance, the feasible flow is obtained by a single resistance-weighted projection. 
Such an offline-online decomposition scheme makes the one-shot approximation particularly useful when the matching problem is repeatedly solved on the same network under stable distributions. Once the limiting flow and the resistance matrix are precomputed, each new realization requires only the updated imbalance vector and a single projection, substantially reducing the online computational burden.

\section{Numerical Experiments}
\label{sec: numerical}

This section presents numerical experiments to evaluate the proposed formulations and examine the theoretical properties of random bipartite matching on general networks. We first describe the experimental settings, including the tested networks, point distributions, and implementation details in Section \ref{sec: experiment_setting}. We then compare the accuracy and computational performance of deterministic estimators $Z^\mathrm{IP}, Z, \tilde{Z},$ and $Z^\mathrm{1s}$ in Section \ref{sec: numerical_deter}. 
Last, in Section \ref{sec: numerical_random}, we validate the asymptotic scaling of the total matching cost, and investigate the fluctuation of the optimal boundary flow to demonstrate asymptotically vanishing fractions of inter-edge matching. 

\subsection{Experimental settings}
\label{sec: experiment_setting}
We validate the accuracy of the proposed network formulations using a series of Monte-Carlo simulations. The tested networks are selected to represent increasing levels of topological complexity and size: a circular network, grid networks with different edge numbers (e.g., $4\times 4$ and $31\times 31$), the Sioux Falls network, and the Chicago sketch network \citep{transportationNetworks}. Figure \ref{fig: vis_graphs} illustrates the corresponding network structures and the number of edges and nodes. 

\begin{figure}[htbp]
    \centering
    \includegraphics[width=1\linewidth]{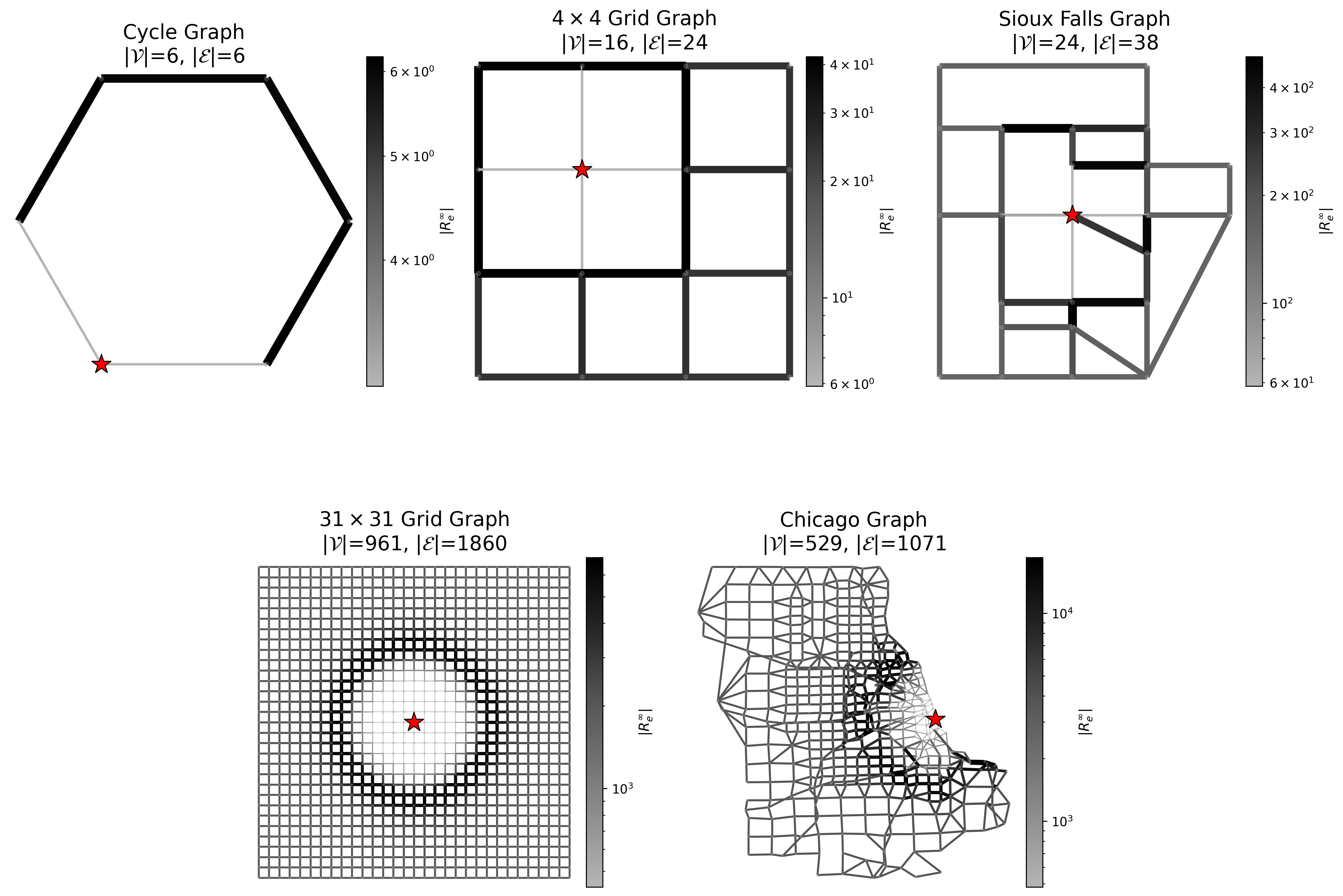}
    \caption{Test graphs with geometric centers and limiting resistance.}
    \label{fig: vis_graphs}
\end{figure}

For the small-scale networks (e.g., circular, $4\times 4$ grid, and Sioux Falls), we vary $n \in $\{100, 500, 1000, 1500, 2000\}. For larger-scale networks,
we vary $n \in $ \{500, 1000, 2000, 5000,10000, 20000\}. For large instances, e.g., when $n\geq 5000$, the standard algorithm becomes computationally expensive. Therefore, we omit $Z^\mathrm{IP}$ for these cases and use $Z$ as the benchmark, based on the consistency with $Z^\mathrm{IP}$ in smaller instances.  For each instance, $n$ supply points and $n$ are demand points are randomly generated on the network edges according to predetermined edge-wise density distributions.
If we need to generate instances under $\mu = \nu$, we simply let supply and demand points to both be uniformly distributed everywhere along all network edges with equal probabilities. 
If we need to generate instances under $\mu \neq \nu$, we still let supply points to be uniformly distributed everywhere in the network. However, we assign demand points heterogeneously to different edges 
based on the edges' relative distances to an arbitrarily chosen ``center'' of the network (the red star markers in 
Figure \ref{fig: vis_graphs}), as follows: 
\begin{align*}
\nu(A) = \sum_{e\in \E} \frac{\exp(-\beta d_e)}{\sum_{e'\in \E} \exp(-\beta d_{e'})}\frac{\lambda(A \cap \L_e)}{l_e}, \quad 
\forall A \in \F, 
\end{align*}
where $\beta > 0$ is a parameter that captures the relative level of heterogeneity. 
In other words, a demand point first ``chooses'' an edge $e$ with probability $\exp(-\beta d_e)/\sum_{e'\in \E} \exp(-\beta d_{e'})$, and is then positioned uniformly along that edge. 
The limiting profile and the limiting resistance are easily computed as follows:
\begin{align*}
&\zeta_e(x) = \left|\frac{1}{L} - \frac{\exp(-\beta d_e)}{l_e\sum_{e'\in \E} \exp(-\beta d_{e'})}\right|x, \quad \forall e\in \E, \forall x\in [0, l_e],\\
&\Res^\infty = \mathrm{diag}\left(\left|\frac{1}{L} - \frac{\exp(-\beta d_e)}{l_{e}\sum_{e'\in \E} \exp(-\beta d_{e'})}\right|^{-1}_{e\in \E}\right).
\end{align*}
Figure \ref{fig: vis_graphs} visualizes the resistance of each edge when $\beta = 10$, with darker and thicker edges indicating larger resistance values. 
Edges near and very far away from the center generally exhibit smaller resistance because their larger supply-demand imbalance requires greater inter-edge flow 
across edges. Somewhere the middle range of distance, the demand density approximately equals the uniform supply density, and the resistance tends to be larger. 

For each combination of $n$, network, and distribution, 100 independent realizations are generated and the optimal total matching distance $Z^\mathrm{IP}$ is computed by the state-of-art JV algorithm \citep{JV_algorithm} as a benchmark. The sample mean across 100 realizations is recorded as the average optimal matching distance. The computation time is also recorded.  
Meanwhile, to solve the convex flow-based formulations, we use the following smooth approximations: 
\begin{align*}
&\Psi_e(c; Q_{e, \epsilon}) = \int_{0}^{l_e} \sqrt{(S_e(x) - c)^2 + \epsilon^2} \d x,\\
&F_e(c; Q_{e, \epsilon}) = \frac{\Psi_{e}'(c; Q_{e,\epsilon})+l_e}{2} = \frac{l_e}{2} + \frac{1}{2}\int_{0}^{l_e} \frac{c-S_e(x)}{\sqrt{(S_e(x) - c)^2 + \epsilon^2}} \d x, \, \text{and }
Q_{e, \epsilon} = F_e^{-1}(\cdot; Q_{e, \epsilon}),
\end{align*}
where $Q_{e,\epsilon}\to Q_{e}$ as $\epsilon \to 0$. 
The smaller the value of $\epsilon$, the closer the approximation of edge profile. 


\subsection{Verification of deterministic estimators}
\label{sec: numerical_deter}
Unless otherwise specified, we set $\beta=10$, use $\epsilon=0.1$ for computing $Z$ and $Z^{\mathrm{1s}}$, and use $\epsilon=1$ for computing $\tilde{Z}$. The smaller value used for $Z$ and $Z^{\mathrm{1s}}$ provides a closer approximation to the original non-smooth edge potential, whereas the larger value used for $\tilde{Z}$ produces a smoother rearranged profile and a more stable local linearization. The sensitivity of these estimators to these choices is examined later in Section \ref{sec: epsilon_sensitivity}.

\subsubsection{Small-scale networks}
We start with the small-scale networks. Figures \ref{fig: three_network_cost} and \ref{fig: three_network_error} show the objective values and relative errors for the small-scale networks. 
$Z^\mathrm{IP}$, $Z$, $\tilde{Z}$, and $Z^\mathrm{1s}$ are represented by black solid curves, red dashed curves with circle markers, green dotted curves with square markers, and blue dash-dot curves with triangle markers, respectively. First, it is observed that across all cases, the estimation by $Z$ closely matches the benchmark $Z^\mathrm{IP}$, with average relative errors smaller than 1\%; moreover, the error between them generally decreases as $n$ increases. 
Second, the first-order approximation $\tilde{Z}$ also provides very accurate estimates in most cases, with relative errors generally below 5 \%, except for the non-identical distributions case on the Sioux Falls network. For the non-identical cases, the relative error of $\tilde{Z}$ appears to increase as the network topology becomes more complex. This pattern arises because the first-order approximation relies on local linearization of $Q_e(\cdot; S_e)$, which is less accurate when the flow varies more across a complex network. Finally, we examine the one-shot fast approximation algorithm. The results show that $Z^\mathrm{1s}$ becomes accurate with average errors smaller than 5\% when $n$ is sufficiently large, e.g., $n\geq 500$. Its relative error consistently decreases as $n$ becomes larger, which is consistent with our expectation.

\begin{figure}[htbp]
    \centering
    \includegraphics[width=0.95\linewidth]{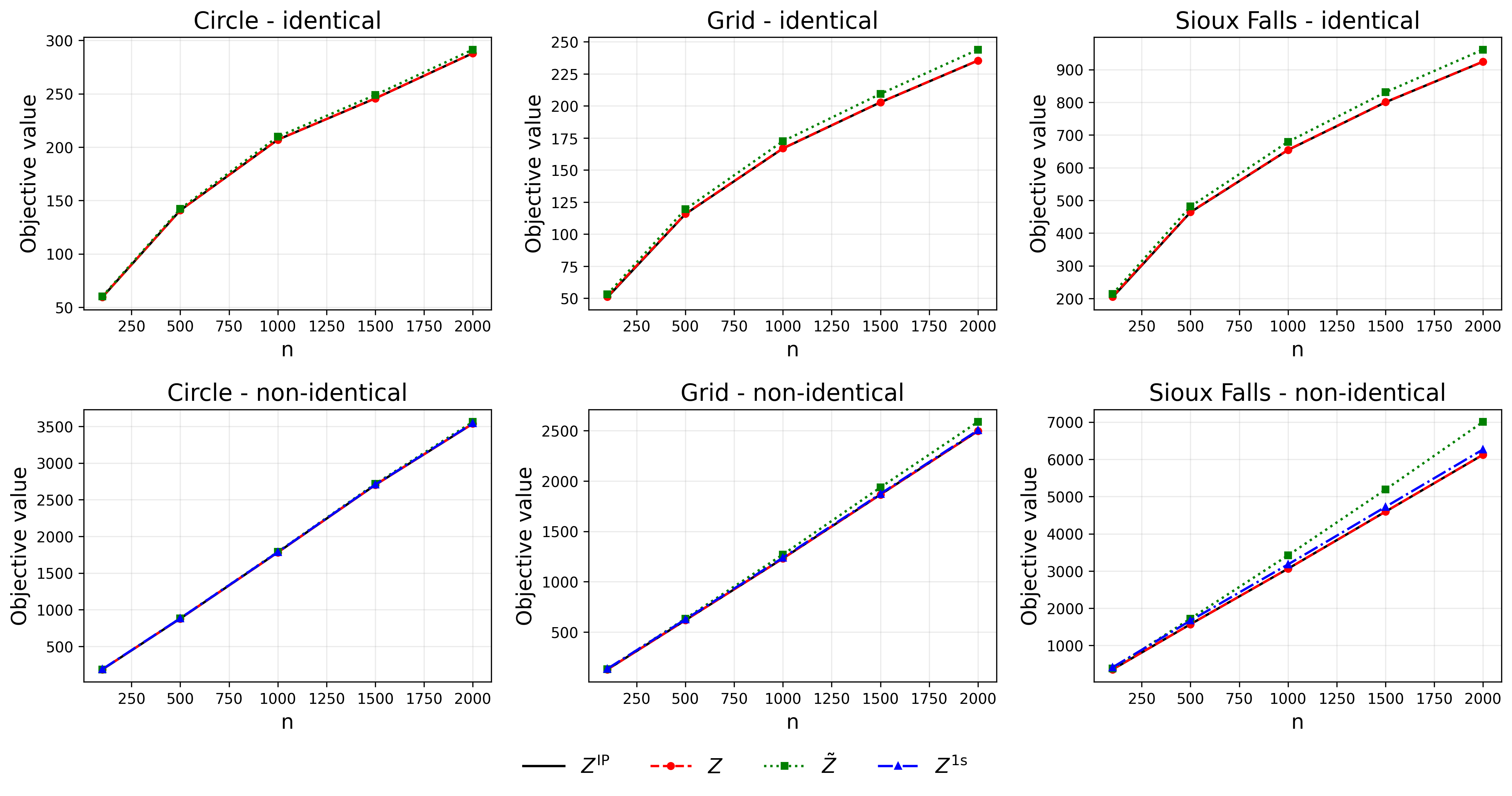}
    \caption{Comparison of objective values for small-scale networks. }
    \label{fig: three_network_cost}
\end{figure}

\begin{figure}[htbp]
    \centering
    \includegraphics[width=0.95\linewidth]{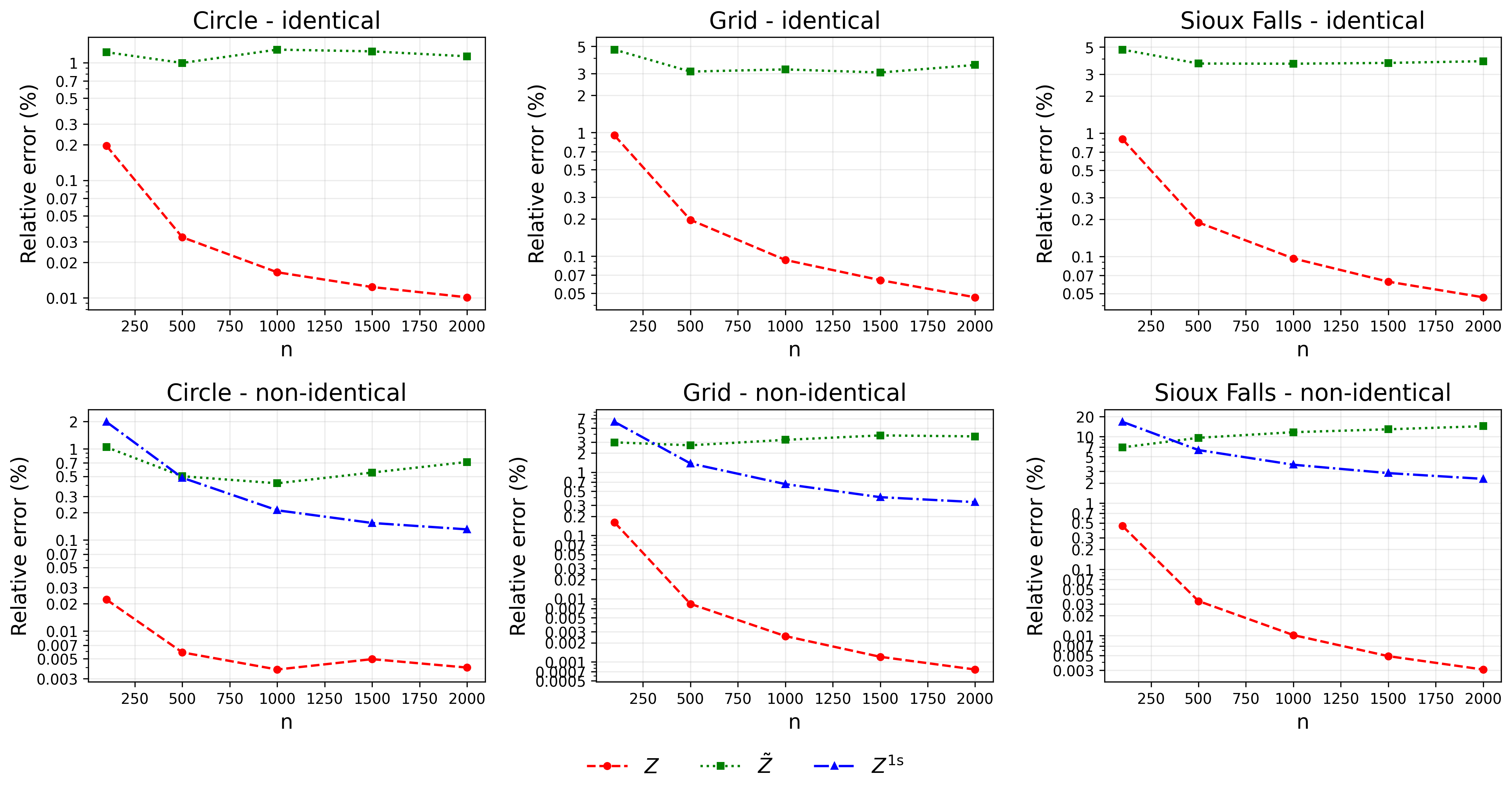}
    \caption{Comparison of relative errors for small-scale networks. }
    \label{fig: three_network_error}
\end{figure}

Next, we compare the computational performance of the four methods. 
Figures~\ref{fig: three_network_time} shows the average running times of $Z^\mathrm{IP}$, $Z$, $\widetilde Z$, and $Z^\mathrm{1s}$ in all realizations for the small networks. 
The average running times of $Z$, $\widetilde Z$, and $Z^\mathrm{1s}$ remain below $0.03$, $0.015$, and $0.005$ seconds, respectively, over the tested range of $n$. 
Their running times increase only slightly with $n$, because the dimensions of the corresponding flow and Laplacian systems depend primarily on the size of the network rather than on $n$. 
In contrast, the running times of $Z^\mathrm{IP}$ increases rapidly with $n$ because it requires creating the pairwise shortest-path cost matrix and solving the resulting assignment problem. 
Hence, although solving $Z^\mathrm{IP}$ directly can be computationally efficient for very small instances, the flow-based estimators become substantially faster for sufficiently large $n$, e.g., $n\geq 500$. Among the proposed methods, $\widetilde Z$ is even faster than $Z$ because it replaces the iterative Newton procedure with a single Laplacian solve, but $Z^\mathrm{1s}$ achieves the fastest online computation time by using precomputed limiting solutions.

\begin{figure}[htbp]
\centering    \includegraphics[width=0.95\linewidth]{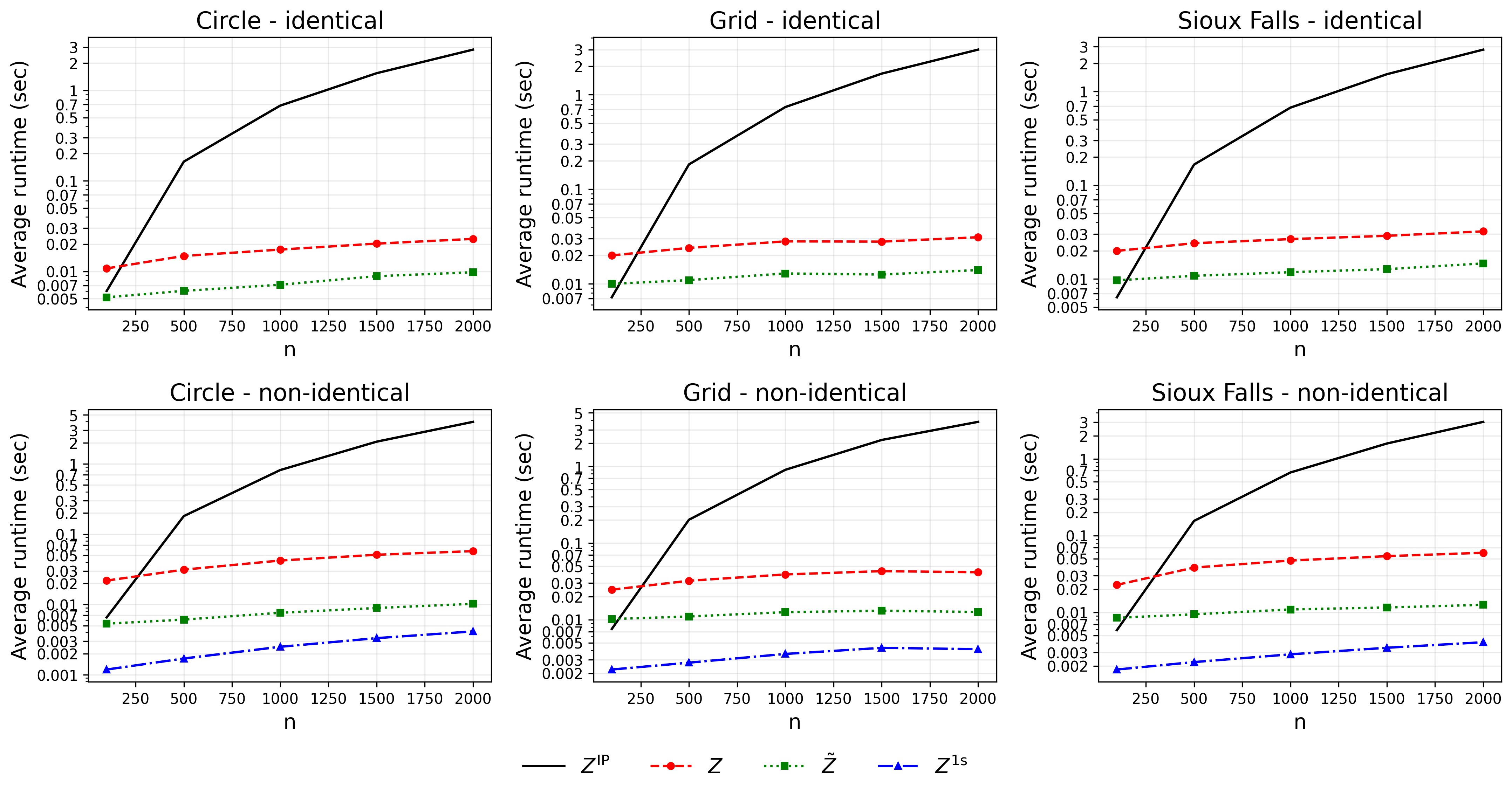}
    \caption{Comparison of running time for small-scale networks. }
    \label{fig: three_network_time}
\end{figure}

\subsubsection{Large-scale networks}
We next validate the accuracy of estimators on larger-scale networks.
Figures \ref{fig: large_network_cost} and \ref{fig: large_network_error} show the results. All estimators are observed with trends similar to those observed on smaller networks: $Z$ provides the most accurate prediction; $\tilde{Z}$ is most accurate when the network structure is simple and the distributions are identical; $Z^\mathrm{1s}$ is effective mainly in the non-identical point distribution regime when $n$ is large. 
\begin{figure}[htbp]
    \centering
    \includegraphics[width=0.95\linewidth]{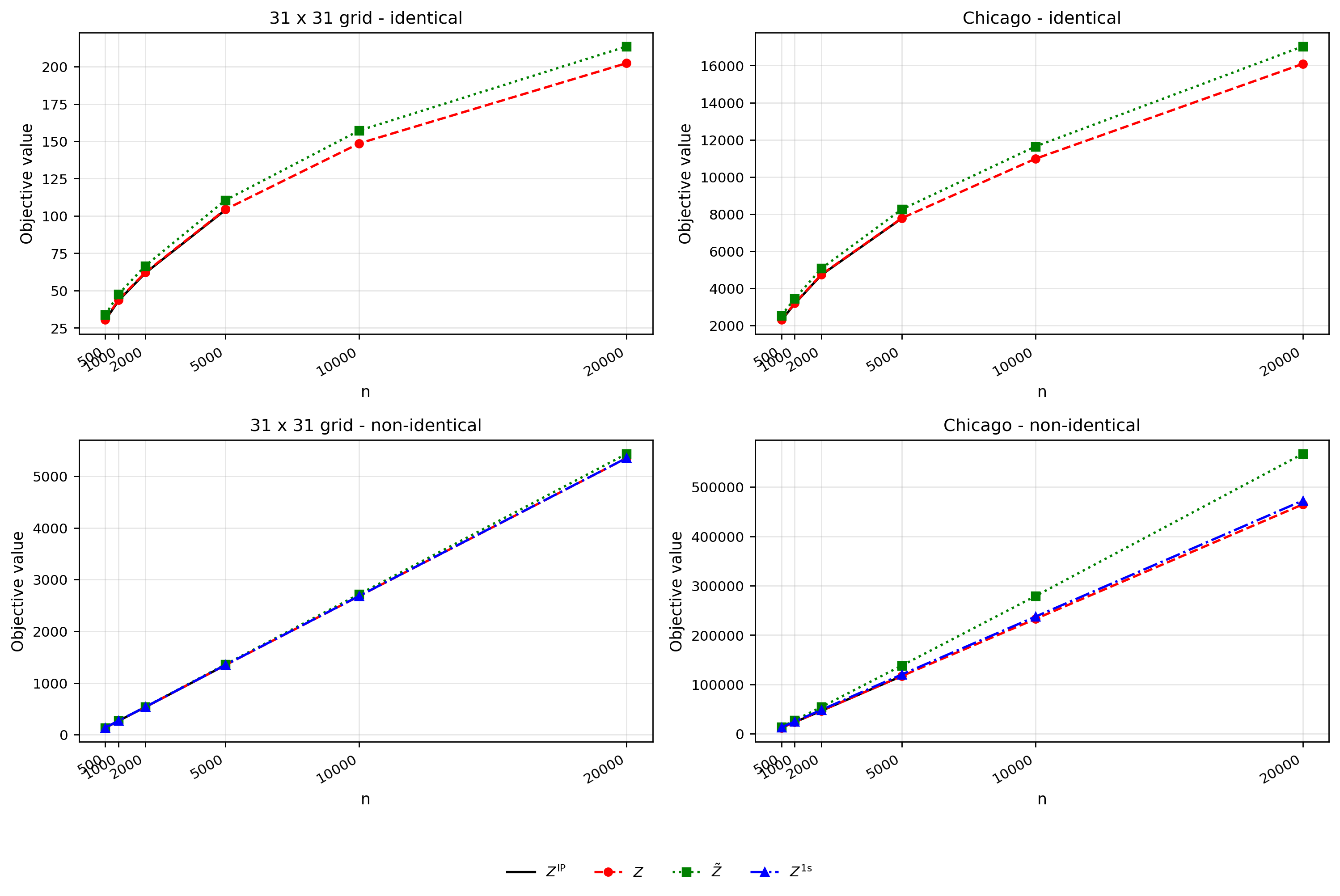}
    \caption{Comparison of objective values for for large-scale networks. }
    \label{fig: large_network_cost}
\end{figure}

\begin{figure}[htbp]
    \centering
    \includegraphics[width=0.95\linewidth]{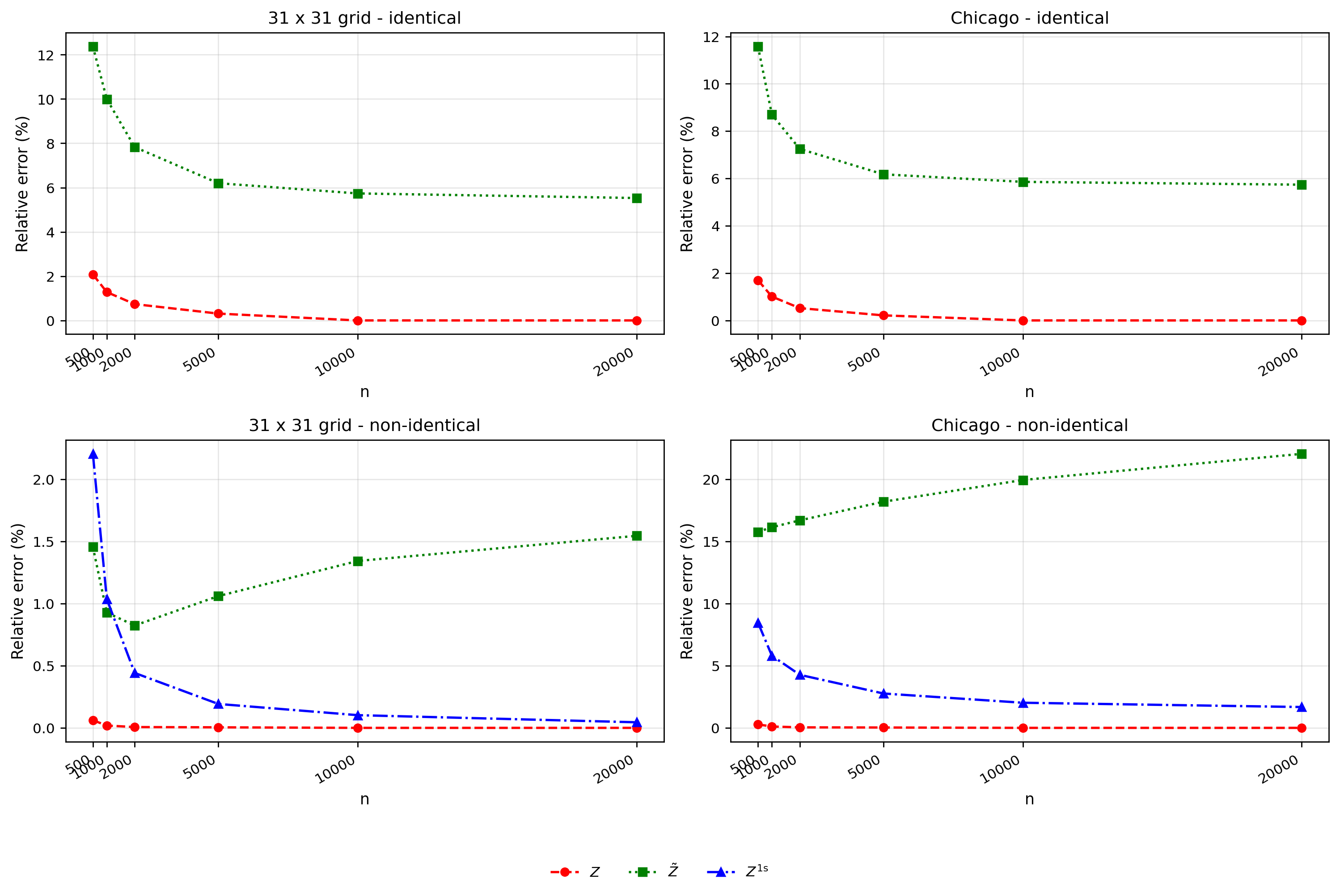}
    \caption{Comparison of relative errors for for large-scale networks. }
    \label{fig: large_network_error}
\end{figure}

For the computational performance of the four methods, the average running times of $Z, \tilde{Z},$ and $Z^\mathrm{1s}$ remain below approximately 15, 0.5, and 0.1 seconds, respectively, over the tested range of $n$. Although the larger network size increases the dimensions of the flow and Laplacian systems, the running time of the proposed estimators still grow only mildly with $n$. In contrast, the computational cost of $Z^\mathrm{IP}$ increases rapidly, and becomes expensive when $n$ is large, e.g., $n\geq 5000$. These results demonstrate that $\tilde{Z}$ and $Z^\mathrm{1s}$ provide substantial computational advantages on larger networks, with $Z^\mathrm{1s}$ reaching the shortest online running time under non-identical point distributions.

\begin{figure}[htbp]
    \centering
    \includegraphics[width=0.95\linewidth]{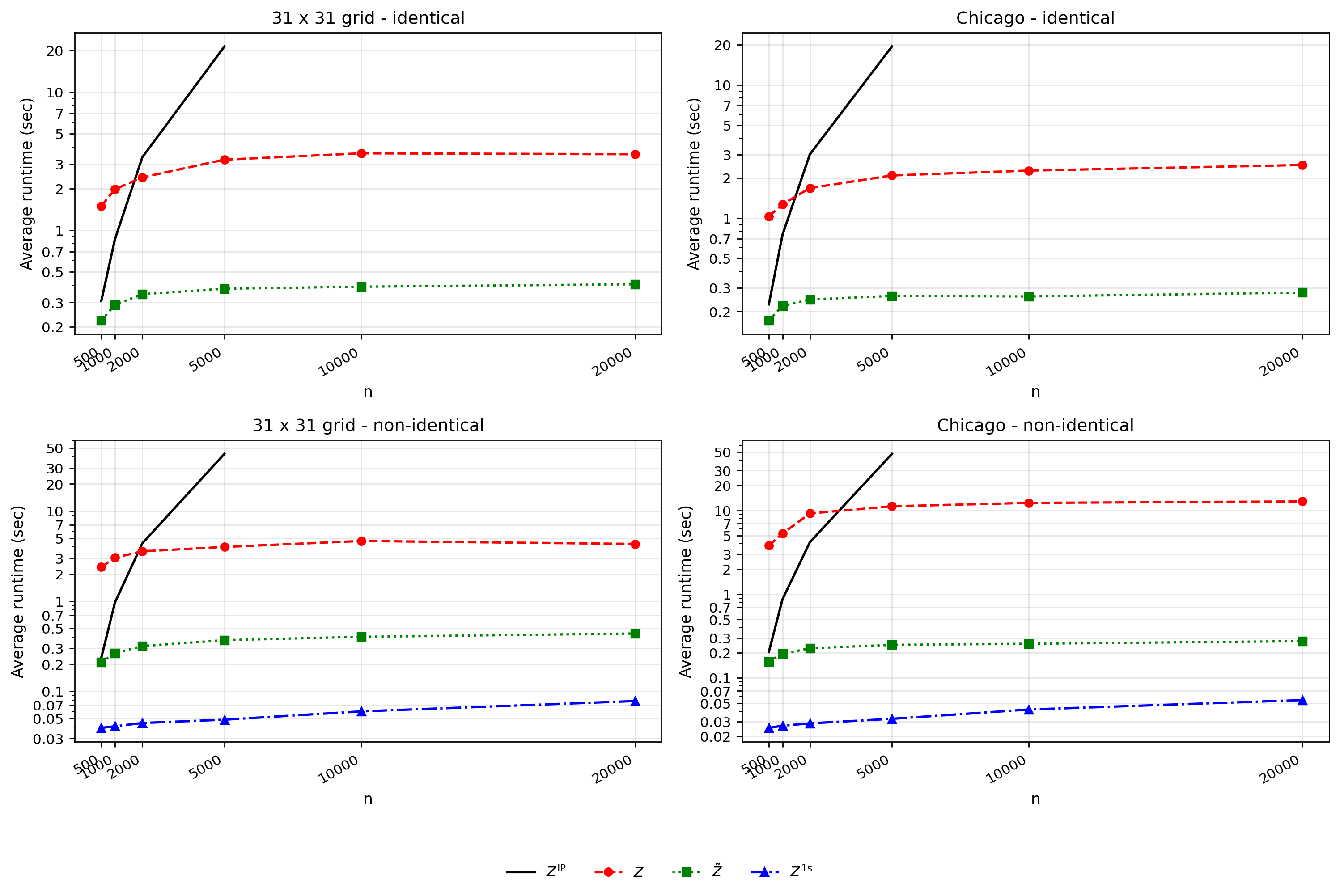}
    \caption{Comparison of running time for  large-scale networks. }
    \label{fig: large_network_time}
\end{figure}

\subsubsection{Sensitivity to parameter $\epsilon$}
\label{sec: epsilon_sensitivity}
We also conduct sensitivity analyses to examine how the choice of parameter $\epsilon$ affects the accuracy and computational efficiency of the estimators. We use the Sioux Falls network and fix $n = 500$. We vary $\epsilon$ over \{0.01, 0.05, 0.1, 0.25, 0.5, 0.75, 1, 1.5, 2, 5, 10\}. 

Figure \ref{fig: error_eps} shows the relative error of $Z$ and $\tilde{Z}$, as compared with $Z^\mathrm{IP}$. The results reveal a clear approximation tradeoff. The relative error of $Z$ remains close to zero whenever $\epsilon$ is small, but increases as $\epsilon$ grows, as expected. 
In contrast, the error of $\tilde{Z}$ decreases substantially with $\epsilon$, since the modified edge objective becomes increasingly quadratic and is therefore better represented by first-order approximations. 

Figure \ref{fig: time_eps} shows the running time of the three methods for different $\epsilon$. The running time of $Z^\mathrm{IP}$ is independent of $\epsilon$ and therefore remains constant. In contrast, the running time of computing $Z$ decreases substantially under a large $\epsilon$, e.g., $\epsilon \geq 0.25$. This occurs because a larger $\epsilon$ produces a smoother and better-conditioned edge objective, allowing the Newton iteration to converge more rapidly. 
This indicates the computational tradeoff: a smaller $\epsilon$ value provides a more accurate approximation of the original objective but requires a longer computation time, and vice versa. 
On the other hand, the running time of $\tilde{Z}$ is consistently the lowest and remains nearly unchanged across different $\epsilon$ values.  
\begin{figure}
    \centering
    \includegraphics[width=0.9\linewidth]{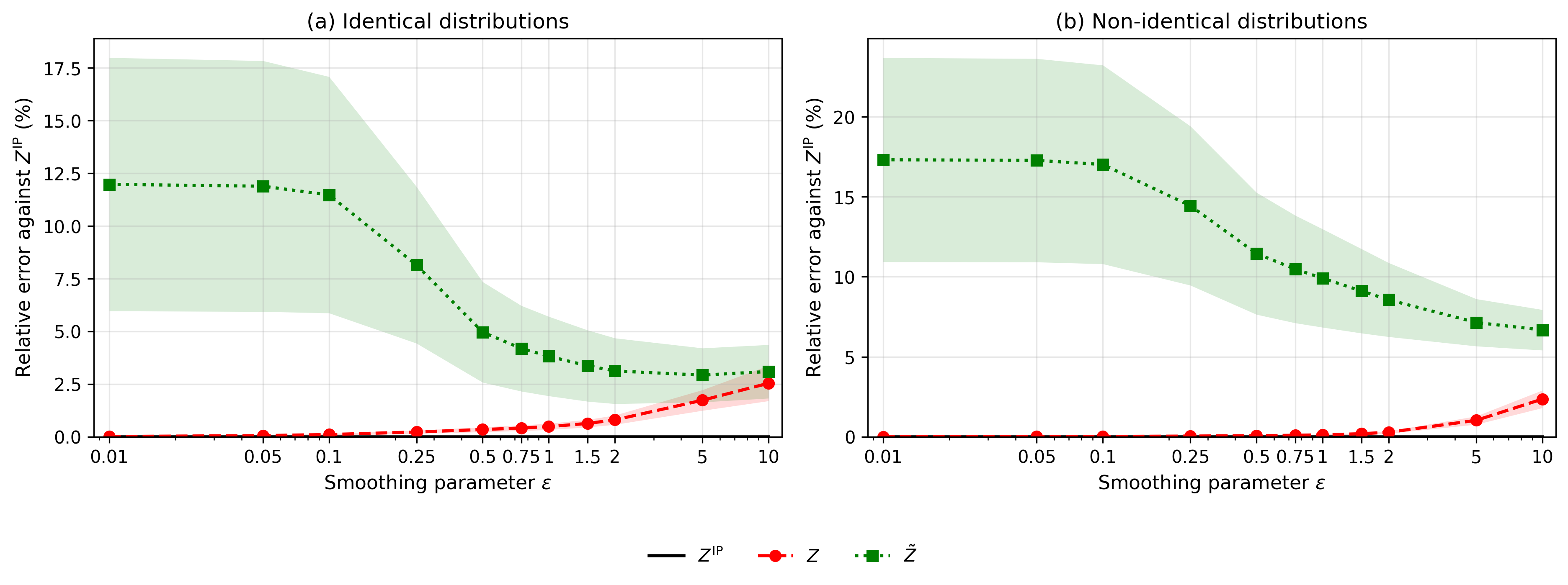}
    \caption{Relative error by different values of $\epsilon$. }
    \label{fig: error_eps}
\end{figure}

\begin{figure}
    \centering
    \includegraphics[width=0.9\linewidth]{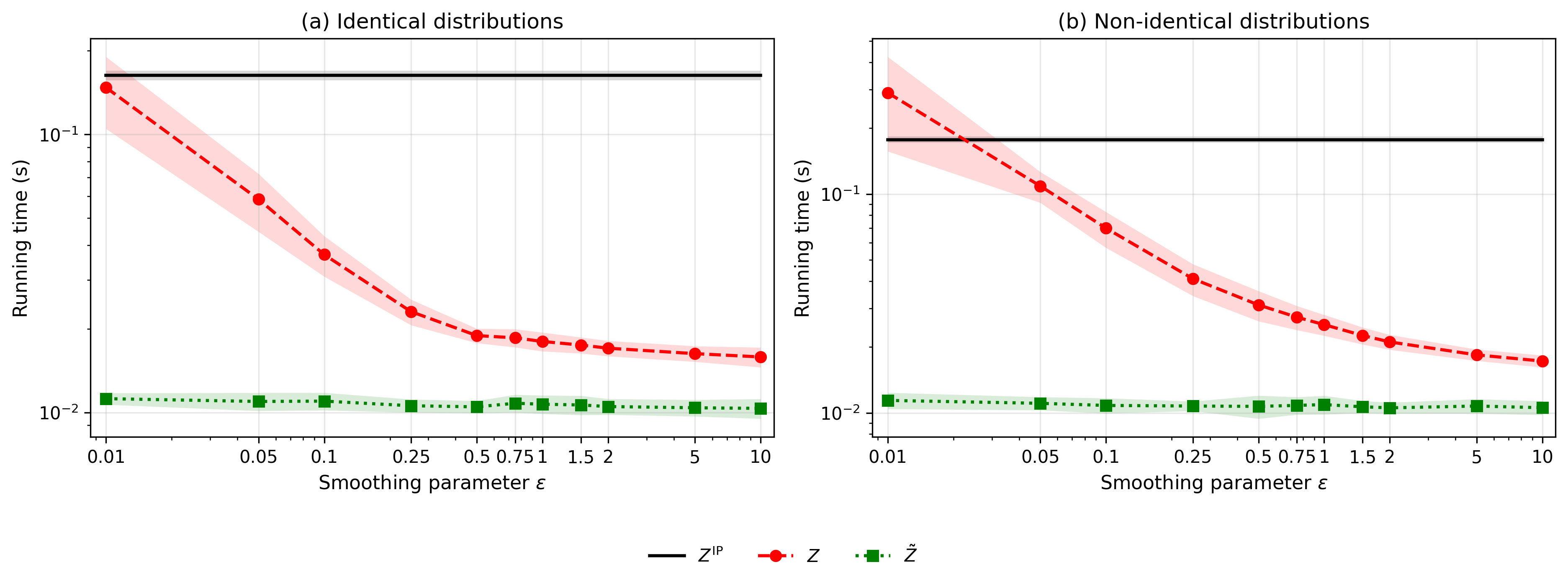}
    \caption{Running time by different values of $\epsilon$. }
    \label{fig: time_eps}
\end{figure}

In summary, one should choose the estimator with an appropriate $\epsilon$ based on the problem setting. When the network structure is simple, with a sufficiently large $\epsilon$, the electric approximation provides an efficient and relatively accurate estimate, especially when point distributions are nearly identical. When the network becomes more irregular and point distributions are drastically non-identical, the convex-flow formulation is more reliable and provides the most accurate estimations, and a moderate $\epsilon$ value can be used to accelerate the convergence of the Newton method. 
For very large-scale instances, when solving the full convex-flow problem becomes computationally expensive, the one-shot fast estimator provides a useful alternative. 

\subsection{Verification of random bipartite matching}
\label{sec: numerical_random}
In this section, we use the Sioux Falls network as an example and numerically validate the asymptotic properties derived for random bipartite matching. 

\subsubsection{Verification of scaling}
We first examine the scaling of the optimal matching cost as $n$
increases. 
The theoretical analysis predicts that $\Exp[Z]^\mathrm{IP} \asymp\sqrt{n}$ in identical distribution cases and $\Exp[Z^\mathrm{IP}] \asymp n$ in non-identical distribution cases. To verify them, we plot in Figure \ref{fig: scaling} the normalized quantity $\Exp[Z^\mathrm{IP}]/\sqrt{n}$ and $\Exp[Z^\mathrm{IP}]/n$, respectively,  
together with their 95\% confidence intervals. In the identical distribution case, $\Exp[Z^\mathrm{IP}]/\sqrt{n}$ remains approximately stable within a narrow range of $[19.9, 20.6]$ for the wider range of $n$ values. Similarly, in the non-identical distribution case, $\Exp[Z^\mathrm{IP}]/n$ stabilizes within $[8.475, 8.510]$. 
These numerical results directly support the derived $\sqrt{n}$- and $n$-scaling properties. 

\begin{figure}[htbp]
    \centering
\includegraphics[width=0.9\linewidth]{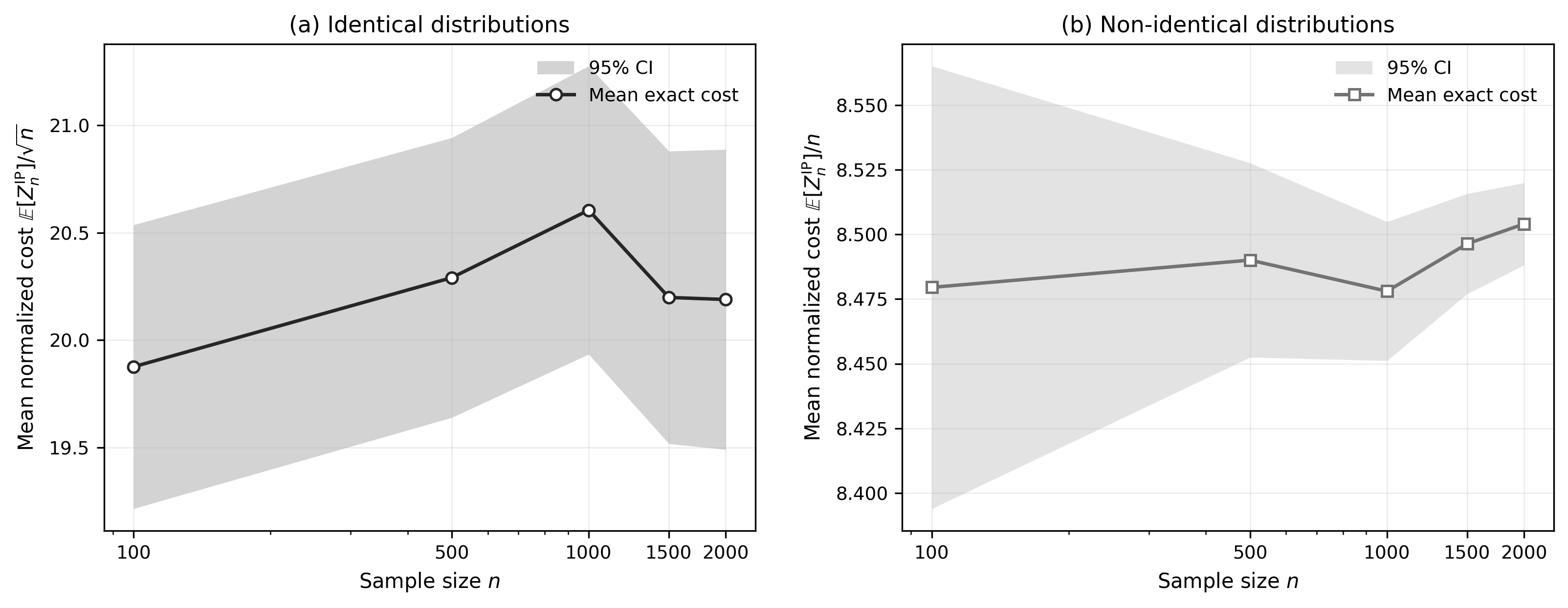}
    \caption{Verification of the scaling of normalized $\Exp[Z^\mathrm{IP}]$.}
    \label{fig: scaling}
\end{figure}

\subsubsection{Verification of sub-Gaussian flow}
We next examine the flow properties in the identical point distribution case. Figure \ref{fig: flow_interedge} shows the 
observed $c_e^\mathrm{int}$ values and fractions of inter-edge matches on randomly selected edges. In Figure \ref{fig: flow_interedge}(a), for each selected edge, the 100 values of $c_e^\mathrm{int}$ obtained from independently simulated instances are represented by black circle markers; their means are represented by red horizontal lines. It is observed that, for each edge, the distribution of $c_e^\mathrm{int}$ is approximately symmetric and bell-shaped, and the mean remains close to zero. These observations are consistent with the theoretical derivation that $c_e^\mathrm{int}$ is centered, symmetric, and sub-Gaussian. Figure \ref{fig: flow_interedge}(b) further shows the fraction of inter-edge matching among all matched pairs, which decreases steadily with $n$, indicating that matching becomes increasingly localized within individual edges. 

\begin{figure}[htbp]
    \centering
    \includegraphics[width=0.95\linewidth]{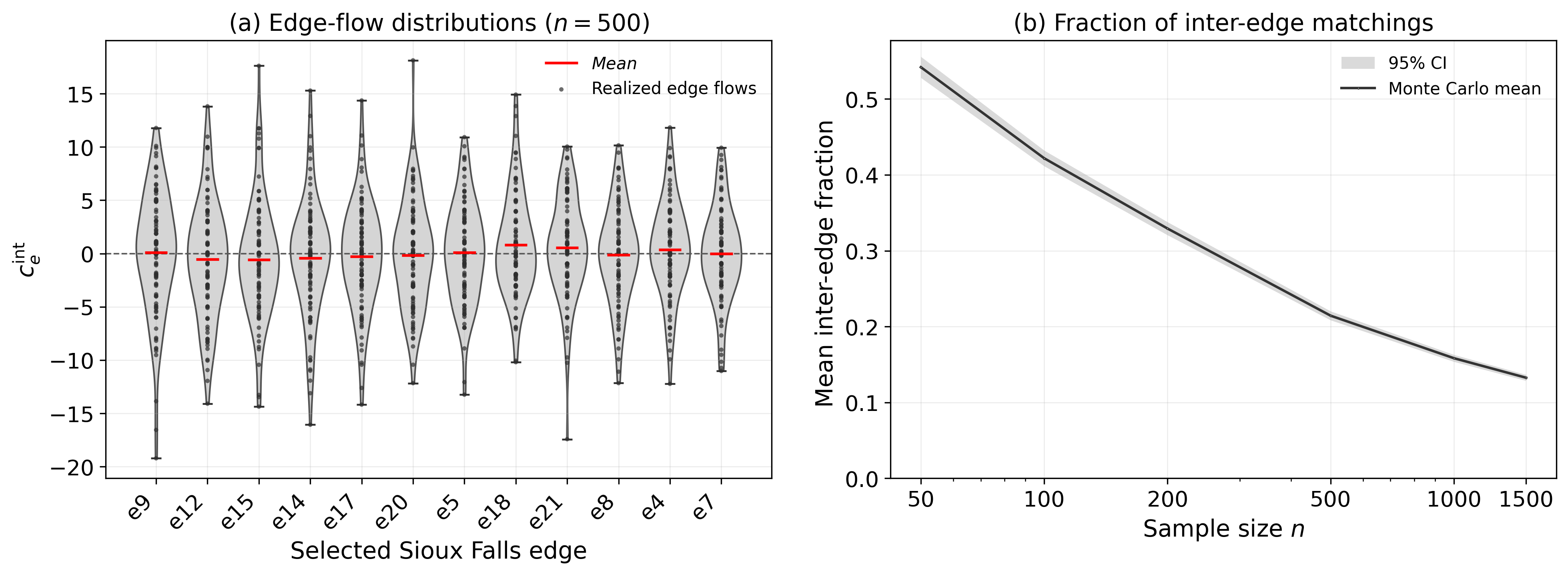}
    \caption{Verification of flow properties in identical distribution case. }
    \label{fig: flow_interedge}
\end{figure}

\subsubsection{Verification of asymptotic convergence}
Last we examine the convergence of $\Exp[Z^\mathrm{IP}]/n$ to $Z^\infty$ in the non-identical distribution cases, which is important for understanding the performance of the fast estimator $Z^\mathrm{1s}$. 
Figure \ref{fig: Zinf_conv} shows the comparison between $\Exp[Z^\mathrm{IP}]$ and $nZ^\infty$ for $\beta\in \{0.1, 1, 5, 10, 20\}$. 
The values of $\Exp[Z^\mathrm{IP}]$ and $nZ^\infty$ are represented by solid curves and hollow markers, respectively, while different colors correspond to different values of $\beta$. 
For all tested values of $\beta$, the difference between $\Exp[Z^\mathrm{IP}]$ and $nZ^\infty$ decreases with $n$, supporting the convergence of $\Exp[Z^\mathrm{IP}]/n$ to $Z^\infty$. However, the convergence is considerably slower when $\beta$ is small (i.e., when demand point distribution is more uniform). 
For larger values of $\beta$, $nZ^\infty$ closely approximates $\Exp[Z^\mathrm{IP}]$ over most of the tested $n$ values.

\begin{figure}
    \centering
    \includegraphics[width=0.95\linewidth]{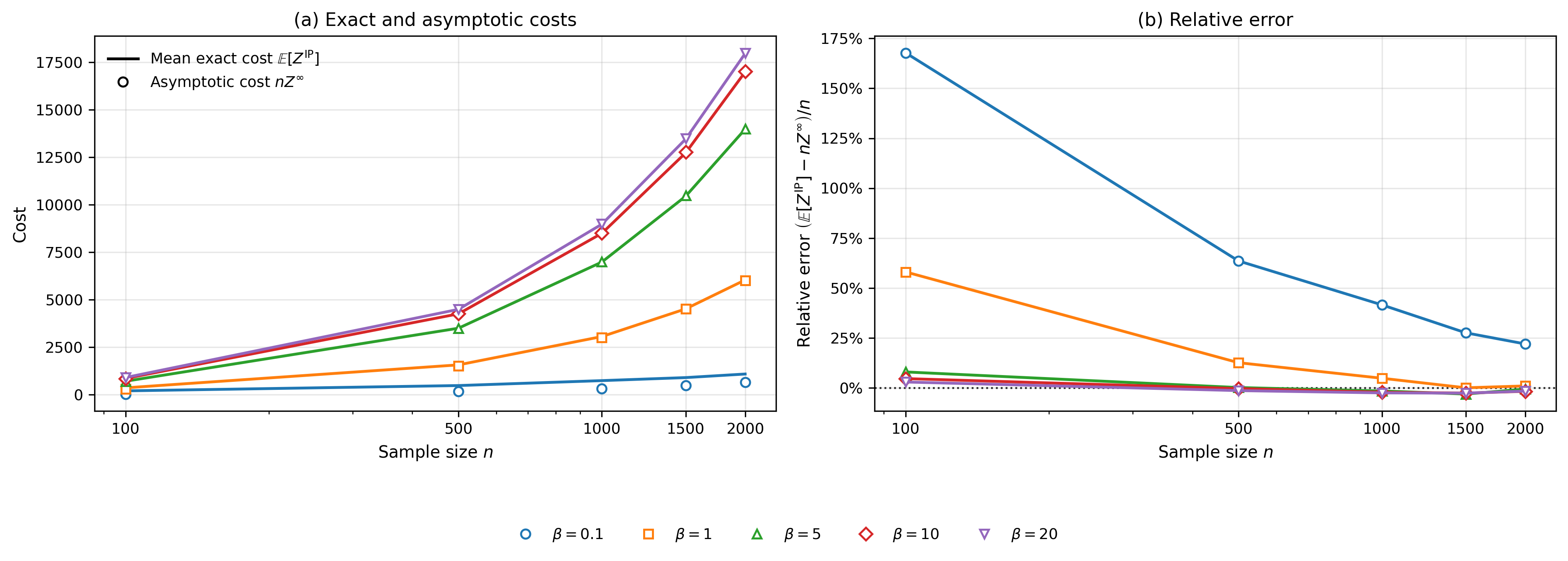}
    \caption{Verification of the convergence of $Z^\mathrm{IP}/n$ to $Z^\infty$.}
    \label{fig: Zinf_conv}
\end{figure}

\section{Conclusion}
\label{sec: conclusion}
This paper develops a flow model based framework for analyzing and approximating deterministic and random minimum-distance bipartite matching problems on general networks. 
First, the matching problem is reformulated as a separable convex-flow problem. 
By representing each matched pair as one unit of flow transporting through the network, the total bipartite matching cost $Z^\mathrm{IP}$ can be expressed as the sum of edgewise convex potential functions (determined by its cumulative supply-demand imbalance profile) that captures point matches within each edge, while network flow between edges captures point matches across edges. This reformulation replaces the full pairwise cost matrix with edgewise imbalance information and reveals how local matching costs and network-level matching jointly determine the optimal matching distance. We prove that the optimal objective value of the convex-flow formulation, $Z^\mathrm{CF}$, is exactly equal to $Z^\mathrm{IP}$.

Second, to enhance computational tractability and revealing interpretable properties, we introduce a smooth and monotone rearranged 
approximation of the edge imbalance profiles. The resulted approximate smooth convex-formulation, with optimal objective value $Z$, can be solved efficiently through its KKT conditions using standard methods (e.g., the Newton method). A first-order linearization of these conditions further yields an electric-network interpretation of the bipartite matching problem. Under this interpretation, the boundary matching flow corresponds to the electric current, the Lagrangian multipliers correspond to the node potential, and each edge has an endogenous resistance induced by the local slope of the rearranged imbalance profile. This connection leads to a resistance-based approximation with objective value $\tilde{Z}$ that requires only a weighted Laplacian solution and therefore avoids repeated nonlinear optimization. The problem is further extended into more general settings, including supply and demand points with heterogeneous masses, directed networks, and unbalanced matching problems with unequal number of supply and demand points.

Third, we study the random bipartite matching problem, where supply and demand points are independently sampled from two predetermined spatial distributions on the network. The analysis identifies two fundamentally different limiting regimes. When supply and demand points on all edges follow identical probability distributions, the expected optimal matching distance scales with $\sqrt{n}$, and the optimal boundary flows are centered, symmetric, and sub-Gaussian. This finding justifies an approximate distance prediction formula in an earlier work
\citep{zhai2026average}. As the number of points increases, matching becomes increasingly localized within each edge. When the supply and demand distributions differ in probability on at least some of the edges, a deterministic supply-demand imbalance dominates the stochastic fluctuation, and the total matching distance scales with $n$. In this regime, the normalized optimal flow asymptotically converges to a deterministic limiting flow, and the limiting edge resistances characterize how the distributional imbalance is ``cancelled'' through inter-edge matches across the network. These properties also support a one-shot approximate solution in which a limiting optimal network flow solution can be precomputed, and solution to each realized instance is a simple resistance-weighted projection from that limiting flow solution. The objective value induced by the projected flow is denoted by $Z^\mathrm{1s}$, which becomes increasingly accurate as the number of matched pairs grows.

A series of numerical experiments are conducted to verify the theoretical results in networks with different sizes and topological structures. The smooth convex objective $Z$ consistently provides the most accurate predictions and closely $Z^\mathrm{IP}$, with average relative errors below $1\%$ across all tested cases.
The resistance-based estimate $\widetilde{Z}$ achieves substantial computational savings and performs particularly well on simpler networks with identical point distributions. 
The one-shot estimator $Z^\mathrm{1s}$ provides the fastest online computation in the non-identical distribution regime and becomes increasingly accurate when the number of matches grows.
The numerical experiments also demonstrate the predicted trends of $\sqrt{n}$- and $n$- scaling, the concentration of the boundary flows, the asymptotically vanishing fraction of the inter-edge matches under the identical point distributions, and the convergence of the normalized matching cost to its deterministic limit under the non-identical point distributions.

This study has several limitations. All proposed formulations focus primarily on static matching problems and the matching cost is measured by the shortest-path distance. Future studies may extend the modeling framework to a more realistic scenario, including dynamic matching and more general cost functions (e.g., to account for factors such as traffic congestion and operation uncertainties). In addition, although the numerical experiments show that the resistance-based approximation performs particularly well on simple networks, the theoretical explanation for this behavior is not fully developed. Further analysis is needed to establish the convergence property or the error bound and clarify how the approximation accuracy depends on the network topology, problem scales, and imbalance in point distributions. 

\section{Acknowledgments}
The first author used OpenAI’s ChatGPT, a large language model, to assist with visualization code refinement and enhancement. The authors reviewed and verified all AI-assisted output and remain fully responsible for the accuracy, integrity, and content of the manuscript.
\pagebreak

\bibliographystyle{apalike}  
\bibliography{ref}  

\pagebreak
\section{Notation table}
\noindent
$G$: undirected network\\
$\mathcal{V}$: set of nodes in $G$\\
$\mathcal{E}$: set of edges in $G$\\
$e$: an edge in $\mathcal{E}$\\
$v$: a node in $\mathcal{V}$\\
$l_e$: length of edge $e$\\
$\mathbf{l}$: vector of edge lengths\\
$L$: total network length\\
$v_e^-,v_e^+$: two endpoint nodes of edge $e$\\
$\delta^-(v)$: set of outgoing edges from node $v$\\
$\delta^+(v)$: set of incoming edges to node $v$\\
$x$: local coordinate on an edge\\
$x_w$: coordinate of point $w$ on its edge\\
$(e,x)$: position on edge $e$ with local coordinate $x$\\
$\L_e(x)$: set of positions on edge $e$ between $0$ and $x$\\
$U$: set of supply points\\
$V$: set of demand points\\
$U_e$: set of supply points on edge $e$\\
$V_e$: set of demand points on edge $e$\\
$n$: number of supply points and demand points in the balanced case\\
$m$: number of demand points in the unbalanced case\\
$d(\cdot,\cdot)$: shortest-path distance on the network\\
$y_{uv}$: binary matching decision between supply point $u$ and demand point $v$\\
$Z^{\mathrm{IP}}$: optimal objective value of the integer-programming matching formulation\\
$f_e(x)$: net flow at position $(e,x)$\\
$S_e(x)$: cumulative supply-demand imbalance profile on edge $e$\\
$\mathbf{s}$: vector of edgewise total imbalances\\
$c_e$: boundary-flow shift on edge $e$\\
$\mathbf{c}$: vector of boundary-flow shifts\\
$\xi_e$: generic edge imbalance profile on edge $e$\\
$\Psi_e(c_e;\xi_e)$: edge potential induced by boundary value $c_e$ and profile $\xi_e$\\
$Z^{\mathrm{CF}}$: optimal objective value of the integer-valued convex-flow formulation\\
$Z^{\mathrm{RLX}}$: optimal objective value of the continuous relaxation of the convex-flow formulation\\
$\mathbf{I}^+$: incoming node-edge incidence matrix\\
$\mathbf{I}^-$: outgoing node-edge incidence matrix\\
$\mathbf{I}$: incidence matrix\\
$\mathbf{c}^{\mathrm{int}}$: optimal integer-valued boundary-flow solution\\
$\mathbf{c}^{\mathrm{rlx}}$: optimal solution of the relaxed convex-flow formulation\\
$\mathbf{c}^{\mathrm{rnd}}$: rounded flow vector obtained from the relaxed solution\\
$\mathbf{c}^{\mathrm{fes}}$: integral feasible flow vector after flow-conservation repair\\
$\Delta$: flow-conservation residual after rounding\\
$F_e(c;\xi_e)$: cumulative length function of profile $\xi_e$ on edge $e$\\
$F_e^-(c;\xi_e)$: left-limit cumulative length function\\
$Q_e(\cdot;\xi_e)$: generalized inverse of $F_e$\\
$Q_e(\cdot;S_e)$: monotone rearrangement of $S_e$\\
$Q_{e,\epsilon}$: smooth approximation of $Q_e(\cdot;S_e)$\\
$\epsilon$: smoothing parameter\\
$Z$: optimal objective value of the smooth approximate convex-flow formulation\\
$\mathbf{c}^*$: optimal solution of the smooth approximate convex-flow formulation\\
$\boldsymbol{\Lambda}$: Lagrangian multiplier vector for node conservation constraints\\
$\boldsymbol{\Lambda}^*$: optimal Lagrangian multiplier vector\\
$x_{0,e}$: reference point on edge $e$ for Taylor linearization\\
$\mathbf{x}_0$: vector of reference points\\
$\mathbf{c}_0(\mathbf{x}_0)$: baseline flow vector from the local uncoupled optimum\\
$\mathbf{R}(\mathbf{x}_0)$: diagonal resistance matrix at reference points $\mathbf{x}_0$\\
$\mathbf{L}(\mathbf{x}_0)$: weighted graph Laplacian induced by $\mathbf{R}(\mathbf{x}_0)$\\
$\mathbf{c}_0^*$: midpoint baseline flow vector, evaluated at $\mathbf{l}/2$\\
$\mathbf{R}^*$: midpoint resistance matrix\\
$\mathbf{L}^*$: midpoint weighted Laplacian matrix\\
$\tilde{\boldsymbol{\Lambda}}$: linearized approximation of the Lagrangian multiplier\\
$\tilde{\mathbf{c}}$: electric or resistance-based approximate flow\\
$\tilde{Z}$: objective value obtained by evaluating $\tilde{\mathbf{c}}$ in the original edge potential\\
$\|\cdot\|_{\mathbf{R}}$: resistance-weighted norm\\
$\mathbf{r}$: Taylor remainder vector in the electric approximation\\
$M$: number of partitions used to subdivide each edge\\
$k$: number of virtual absorption points per edge in the unbalanced approximation\\
$\mathcal{V}^k$: node set of the subdivided network induced by virtual points\\
$\mathcal{E}^k$: edge set of the subdivided network induced by virtual points\\
$\mathbf{a}$: absorbed-flow vector in the unbalanced formulation\\
$\mathbf{1}$: vector of ones with compatible dimension\\
$\mu$: probability measure for supply-point locations\\
$\nu$: probability measure for demand-point locations\\
$\mathcal{F}$: $\sigma$-algebra on the network sample space\\
$\zeta_e(x)$: limiting normalized imbalance profile on edge $e$\\
$\mathbf{s}^{\infty}$: limiting vector of terminal imbalance values\\
$Z^{\infty}$: limiting objective value of the normalized convex-flow problem\\
$\mathbf{c}^{\infty}$: optimal solution of the limiting convex-flow problem\\
$B$: standard Brownian bridge\\
$\mathbf{R}^{\infty}$: limiting resistance matrix\\
$\mathbf{L}^{\infty}$: limiting weighted Laplacian matrix\\
$\mathbf{c}^{\mathrm{1s}}$: one-shot asymptotic flow approximation\\
$Z^{\mathrm{1s}}$: objective value induced by the one-shot approximation\\
$n_e^{\mathrm{inter}}$: number of matches crossing edge boundary on edge $e$\\
$w_e$: center-weighted distance score assigned to edge $e$\\

\end{document}